\newlist{steps}{enumerate}{1}
\setlist[steps, 1]{label = Step \arabic*:}
\newcommand{\al}{\alpha}
\newcommand{\ub}{\mathbf{u}}
\newcommand{\Ub}{\mathbf{U}}
\newcommand{\Wb}{\mathbf{W}}
\newcommand{\fb}{\mathbf{f}}
\newcommand{\Db}{\mathbf{D}}
\newcommand{\Fb}{\mathbf{F}}
\newcommand{\Eb}{\mathbf{E}}
\newcommand{\Ab}{\mathbf{A}}
\newcommand{\Bb}{\mathbf{B}}
\newcommand{\Vb}{\mathbf{V}}
\newcommand{\Jb}{\mathbf{J}}
\newcommand{\pa}{\partial}
\newcommand{\na}{\nabla}
\newcommand{\Dx}{\Delta x}
\newcommand{\Dy}{\Delta y}
\newcommand{\Dt}{\Delta t}
\newcommand{\iph}{i+\frac{1}{2}}
\newcommand{\jph}{j+\frac{1}{2}}
\newcommand{\imh}{i-\frac{1}{2}}
\newcommand{\jmh}{j-\frac{1}{2}}
\definecolor{lightyellow}{HTML}{fffcf1}
\newtheorem{remark}{Remark}[section]
\newtheorem{thm}{Theorem}[section]
\newtheorem{prop}[thm]{Proposition}
\newcommand{\mde}{\textbf{O2EXP-MultiD}}
\newcommand{\mdi}{\textbf{O2IMEX-MultiD}}
\newcommand{\phme}{\textbf{O2EXP-PHM}}
\newcommand{\phmi}{\textbf{O2IMEX-PHM}}
\newcommand{\ote}{\textbf{O2EXP}}
\newcommand{\oti}{\textbf{O2IMEX}}
\newcommand{\rev}[1]{{\color{red}#1\color{black}}}
\newcommand{\jump}[1]{[\![ #1]\!]}
\providecommand{\keywords}[1]
{
	\small	
	\textbf{{Keywords: }} #1
}
\title{Second order divergence constraint preserving schemes for two-fluid relativistic plasma flow equations}
\date{}
\author[a]{Jaya Agnihotri}
\author[b]{Deepak Bhoriya}
\author[a]{Harish Kumar}
\author[c]{Praveen Chandrashekar}
\author[b,d]{Dinshaw S. Balsara}
\affil[a]{Department of Mathematics, Indian Institute of Technology Delhi, India}
\affil[b]{Physics Department, University of Notre Dame, USA}
\affil[c]{Centre for Applicable Mathematics, TIFR, Bangalore, India}
\affil[d]{ACMS, University of Notre Dame, USA}
\begin{document}

	\date{}
	\maketitle
	\begin{abstract}
		Two-fluid relativistic plasma flow equations combine the equations of relativistic hydrodynamics with Maxwell's equations for electromagnetic fields, which involve divergence constraints for the magnetic and electric fields. When developing numerical schemes for the model, the divergence constraints are ignored, or Maxwell's equations are reformulated as Perfectly Hyperbolic Maxwell's (PHM) equations by introducing additional equations for correction potentials. In the latter case, the divergence constraints are preserved only as the limiting case.
		
		In this article, we present second-order numerical schemes that preserve the divergence constraint for electric and magnetic fields at the discrete level. The schemes are based on using a multidimensional Riemann solver at the vertices of the cells to define the numerical fluxes on the edges. The second-order accuracy is obtained by reconstructing the electromagnetic fields at the corners using a MinMod limiter. The discretization of Maxwell's equations can be combined with any consistent and stable discretization of the fluid parts. In particular, we consider entropy-stable schemes for the fluid part. The resulting schemes are second-order accurate, entropy stable, and preserve the divergence constraints of the electromagnetic fields. We use explicit and IMEX-based time discretizations. We then test these schemes using several one- and two-dimensional test cases. We also compare the divergence constraint errors of the proposed schemes with schemes having no divergence constraints treatment and schemes based on the PHM-based divergence cleaning.
		\end{abstract}
	\keywords{Two-fluid relativistic plasma flows, Divergence constraints preserving schemes, IMEX-schemes, Multidimensional Riemann solvers}
\section{Introduction}
In most astrophysical phenomena, fast-moving charged fluid components interact with electromagnetic forces and also generate these forces. To model such flows, the equations of relativistic magnetohydrodynamics (RMHD) are used~\cite{Komissarov1999,Balsara2001,DelZanna2003,Mignone2006,Komissarov2007}. Although highly successful in several applications, these equations consider the fluid as a single fluid and ignore the two-fluid effects, which enforces limitations on the RMHD model where it can be used~\cite{Amano2016, Balsara2016}. Hence, a two-fluid description of the fluid components is desirable. Several authors have considered two-fluid non-relativistic plasma flow models~\cite{Hakim2006,loverich2011,wang2020,kumar2012entropy,Abgrall2014,Meena2019}.  Similarly, for relativistic plasma flows, several authors have considered two-fluid relativistic plasma flow equations~\cite{zenitani2009relativistic,Zenitani2010,Amano2013,Barkov2014,Balsara2016,Bhoriya2023,bhoriya_entropydg_2023}. 

The two-fluid relativistic plasma flow equations are a system of hyperbolic balance laws. It models ion and electron flows via equations of relativistic hydrodynamics (RHD), and electromagnetic fields are modeled using Maxwell's equations. These three parts (two fluid parts and one electromagnetic part) are coupled via nonlinear source terms due to the Lorentz forces. Furthermore, the source terms in Maxwell's equations in the form of total current and total charge are described using the fluid variables. Several authors have proposed numerical methods for the relativistic two-fluid plasma flows~\cite{zenitani2009relativistic,Zenitani2009a,Amano2013,Barkov2014,Amano2016,Balsara2016,Bhoriya2023,bhoriya_entropydg_2023}.  In~\cite{zenitani2009relativistic, Zenitani2009a}, the authors have simulated relativistic magnetic reconnection using the two-fluid model. In~\cite{Amano2013}, the authors study the importance of the electric field in the case of strongly magnetized relativistic plasma. A multidimensional third-order accurate scheme for the model was proposed in~\cite{Barkov2014}. In particular, the issue of electromagnetic divergence constraints was addressed in~\cite{Amano2016}  and~\cite{Balsara2016}. More recently, \cite{Bhoriya2023,bhoriya_entropydg_2023} developed entropy-stable numerical schemes for this model. 

At the continuous level, the divergence-free condition on the magnetic field and the divergence constraint on the electric field, in the form of Guass's law, appear separately. However, they are automatically satisfied if the initial condition has a divergence-free magnetic field and the initial total current density and electric field satisfy Gauss's law. Hence, these two divergence constraints can be viewed as additional constraints on the initial conditions. However, at the discrete level, this is often not the case. Often, to clean the error in satisfying divergence constraints, a perfectly hyperbolic Maxwell's (PHM) equations are considered~\cite{Barkov2014,Bhoriya2023,bhoriya_entropydg_2023}, which do not completely satisfy the constraints. In~\cite{Amano2016}, electric and magnetic fields are evolved on the cell interfaces, resulting in a staggered scheme, which ensures the preservation of both divergence constraints. A higher-order scheme by evolving electric and magnetic fields on the cell interfaces, using the multidimensional Riemann cells at the cell vertices, is presented in~\cite{Balsara2016} while fluid variables are evolved at the cell centers. 

In this article, motivated by the continuous problem, we want to present two-dimensional, co-located, second-order discretizations, which ensure that the divergence constraints are automatically satisfied, provided the initial condition has a discrete divergence-free magnetic field and a discrete Gauss's law is satisfied. A similar approach was taken in \cite{jaya2024}, where divergence constraint preserving schemes for non-relativistic two-fluid ideal plasma flow equations are designed. The key idea is to define a new approximated Riemann solver for Maxwell's equations at the cell faces, using the multidimensional Riemann solver~\cite{Balsara2016,balsara2014,chandrashekar2020} at the cell vertices. To achieve the second-order accuracy of the schemes, the input variables of the multidimensional Riemann solver at the cell vertices are reconstructed using a MinMod limiter along the diagonal directions. The resulting second-order discretization can be coupled with any consistent and stable discretization of the fluid models. In particular, we use entropy stable~\cite{Bhoriya2023} numerical schemes for the fluid models. Finally, we use explicit and Implicit-Explicit (IMEX) time update schemes. We also analyze the effects of these time discretizations on the divergence constraints.

The rest of the article is organized as follows. In the next section, we describe the two-fluid relativistic plasma flow equations.  In Section~\ref{sec:semi_schemes}, following~\cite{Bhoriya2023}, we first recall the semi-discrete entropy stable numerical schemes for the fluid equations. Then, we present an approximate Riemann solver based on the multidimensional Riemann solver at the vertices. To obtain the second-order accuracy, we then describe the MinMod-based reconstruction process. We also present the divergence constraints analysis for the proposed semi-discrete schemes. In Section~\ref{sec:fully_dis}, we present the time discretizations and corresponding divergence constraint analysis. Numerical results in one and two dimensions are then presented in Section~\ref{sec:num_test}, followed by concluding remarks in Section~\ref{sec:con}. 

\section{Two-fluid relativistic plasma flow equations}
\label{sec:model}
Following~\cite{Barkov2014, Bhoriya2023,bhoriya_entropydg_2023}, the fluid part of the two-fluid equations can be written as,
\begin{subequations}\label{eq:TFRHD_fluid}
		\begin{align}
			%
			\frac{\partial D_\alpha}{\partial t} + \nabla \cdot (D_\alpha \mathbf{u}_\al) & = 0, \label{density}
			\\
			%
			\frac{\partial \mathbf{m}_\alpha}{\partial t} + \nabla \cdot (\mathbf{m}_\alpha \mathbf{u}_\al  + p_\alpha \mathbf{I}) & = r_\alpha \Gamma_\alpha \rho_\alpha (\mathbf{E}+\mathbf{u}_\al \times \mathbf{B}), \label{momentum} 
			\\
			\frac{\partial \mathnormal{E}_\al}{\partial t} + \nabla \cdot ((\mathnormal{E}_\alpha+p_\al)\mathbf{u}_\al) & = r_\al \Gamma_\al \rho_\al (\mathbf{u}_\al \cdot \mathbf{E}).	\label{energy}
	\end{align}
\end{subequations}
Here, $D_\al$ refers to mass density,  $\mathbf{m}_\al$ is the momentum density and $\mathnormal{E}_\al$ refers to the total energy density of the species $\al\in\{i,e\}$, denoting ion and electron flows. The flow of each species is modeled via equations of relativistic hydrodynamics (RHD). In particular, \eqref{density} represents the mass conservation, \eqref{momentum} is the equation for the momentum conservation, with the source terms representing the Lorentz forces due to the electromagnetic fields, and~\eqref{energy} is the equation for energy conservation. The conserved fluid quantities, $D_\al, \mathbf{m}_\al= \left(m^x_{\al},m^y_{\al}, m^z_{\al}\right)$, and $E_\al$ are related to the primitive fluid quantities, proper density $\rho_\al$,  velocity $\mathbf{u}_\al=\left(u_{\al}^x,u_{\al}^y, u_{\al}^z\right)$ and pressure $p_\al$, via relations,
	\begin{align*}
		D_\alpha           & = \rho_\alpha \Gamma_\alpha,                \\
		\mathbf{m}_\alpha      & = \rho_\alpha h_\alpha \Gamma^2_\alpha \mathbf{u}_\al, \\
		\mathnormal{E}_\alpha & = \rho_\alpha h_\alpha \Gamma^2_\alpha - p_\alpha.
	\end{align*}
Here $\Gamma_\al$ is the Lorentz factor given by,
	\begin{equation*}
	\Gamma_\al = \dfrac{1}{\sqrt{1 - |\mathbf{u}_\al|^2}}, \qquad \text{ with} \qquad \mathbf{u}_\al = \left(u_{\al}^x,u_{\al}^y, u_{\al}^z\right).
\end{equation*}
where we have assumed the speed of light to be unity. Furthermore, the $h_\al$ is the specific enthalpy, given by ideal equation of state,
\begin{equation*}
	h_\alpha = 1+ \frac{\gamma_\alpha}{\gamma_\alpha - 1} \frac{p_\alpha}{\rho_\alpha} \label{h},
\end{equation*}
where the specific heat ratio is $\gamma_\alpha=c_{p_\alpha}/c_{V_\alpha}$. Consequently, the sound speed $c_\alpha = c_{s_\alpha}$ and the polytropic index $n_\alpha$ can be expressed as
\begin{equation*}
	n_\alpha=k_\alpha-1 \qquad \text{  and  } \qquad c_\alpha^2=\frac{k_\alpha p_\alpha }{n_\alpha \rho_\alpha h_\alpha}, 
\end{equation*}
where $k_\alpha = \dfrac{\gamma_\alpha}{\gamma_\alpha-1}$ are constants. 

The charge to mass ratios are given by  $r_{\alpha}=\dfrac{q_\al}{m_\al}.$ The source terms in momentum and energy equations depend upon the electric $\mathbf{E}=(E_x, E_y, E_z)$ and magnetic $\mathbf{B}=(B_x, B_y, B_z)$ fields. The evolution of these electromagnetic fields is governed by Maxwell's equations, given by,
	\begin{subequations}\label{eq:maxwell_eqn}
	\begin{align}
		\dfrac{\partial \mathbf{B}}{\partial t} + \nabla \times \mathbf{E} &= 0, \label{eq:max_B}\\
		\dfrac{\partial \mathbf{E}}{\partial t} - \nabla \times \mathbf{B} &= -{\mathbf{J}},\label{eq:max_E}\\
		\nabla \cdot \mathbf{B} &= 0,\label{eq:div_B}\\
		\nabla \cdot \mathbf{E} &= {\rho_c }. \label{eq:gauss_E}
	\end{align}
\end{subequations}
The source term of Maxwell's equations~\eqref{eq:maxwell_eqn} contains the total charge density $\rho_c$ and total current density vector $\mathbf{J} = (J_x,J_y,J_z)$, which are given by,
\begin{align}
\rho_c &= \sum_\al{r_\al \rho_\al \Gamma_\al }, \label{eq:charge}\\
\mathbf{J} &= \sum_\al{r_\al \rho_\al \Gamma_\al \mathbf {u}_\al},\label{eq:current}\qquad \al \in \{i,\ e\}.
\end{align}
Ignoring the divergence constraints \eqref{eq:div_B} and \eqref{eq:gauss_E}, the system of equations \eqref{eq:TFRHD_fluid} and \eqref{eq:maxwell_eqn} can be written in the form of balance law:
\begin{equation}
	\frac{\partial \mathbf{U}}{\partial t}+\frac{\partial \mathbf{f}^x}{\partial x} + \frac{\partial \mathbf{f}^y}{\partial y}= \mathcal{S}.
	\label{eq:system}
\end{equation}
The conservative variable vector is given by, 
\[
\mathbf{U}=(\mathbf{U}_i^\top, \mathbf{U}_e^\top, \mathbf{U}_m^\top)^\top, 	
\]
where, $\mathbf{U}_i=(D_i, \mathbf{m}_i, \mathnormal{E}_i)^\top$ is the vector of ion-fluid conservative variables, $\mathbf{U}_e=(D_e, \mathbf{m}_e, \mathnormal{E}_e)^\top$ is the vector of electron-fluid conservative variables,  $\mathbf{U}_m = (\mathbf{B}, \ \mathbf{E})^\top$ consists of Maxwell's equations conservative variables. Here, $\mathcal{S}$ is the source term given by,
\begin{align*}
	\mathcal{S} =
	\begin{pmatrix}
		\mathcal{S}_i (\Ub_i,\Ub_m)\\
		\mathcal{S}_e (\Ub_e,\Ub_m)\\
		\mathcal{S}_m(\Ub_i,\Ub_e)
	\end{pmatrix}
\end{align*}
where the source terms corresponding to the fluid parts are given by,
\begin{align*}
	\mathcal{S}_\al (\Ub_\al,\Ub_m)=
	\begin{pmatrix}
		0\\
		r_\al D_\al (E_x+u_\al^yB_z-u_\al^zB_y)\\
		r_\al D_\al (E_y+u_\al^zB_x-u_\al^xB_z)\\
		r_\al D_\al (E_z+u_\al^xB_y-u_\al^yB_x)\\
		r_\al D_\al (u_\al^xE_x +u_\al^yE_y+u_\al^zE_z)
	\end{pmatrix}.
 \end{align*}
 The Maxwell's source terms are given by,
\begin{align*}
	\mathcal{S}_m (\Ub_i,\Ub_e)= \left(0,0,0,-J_x,-J_y,-J_z\right)^\top.
\end{align*}
The fluxes in the $x$ and $y$directions are denoted by $\mathbf{f}^x$ and $\mathbf{f}^y$, and are given by,
\[
\mathbf{f}^d =\begin{pmatrix}
	\mathbf{f}^d_i(\mathbf{U}_i)\\
	\mathbf{f}^d_e(\mathbf{U}_e)\\
	\mathbf{f}^d_m(\mathbf{U}_m)	
\end{pmatrix}, \qquad d \in\{x,y\}.
\]
Here, $\mathbf{f}^d_i(\mathbf{U}_i)$ is the flux for ions in direction $d$,  $\mathbf{f}^d_e(\mathbf{U}_e)$ is the flux for electrons in $d$ direction and  $\mathbf{f}^d_m(\mathbf{U}_m)$ is the flux for Maxwell's equations in direction $d$. As each of these individual fluxes depends on its own variables (i.e., ion flux depends on only the ion conservative variable, electron flux depends on the electron conservative variable, and Maxwell's flux depends only on electromagnetic fields.), flux for the whole equation has three independent parts. The interaction between these three parts is via source terms only. The fluid fluxes $\mathbf{f}_\al^x$ and $\mathbf{f}_\al^y$ are,
\begin{align}
        \mathbf{f}_\al^x = 
	\begin{pmatrix}
		D_\al u^{x}_{\al}\\
		m^{x}_{\al} u^{x}_{\al} + p_\al\\
		m^{y}_{\al} u^{x}_{\al}\\
		m^{z}_{\al} u^{x}_{\al}\\
		m^{x}_{\al}
	\end{pmatrix},
	\quad
	\mathbf{f}_\al^y = 
	\begin{pmatrix}
		D_\al u^{y}_{\al}\\
		m^{x}_{\al} u^{y}_{\al}\\
		m^{y}_{\al} u^{y}_{\al}+p_\al\\
		m^{z}_{\al} u^{y}_{\al}\\
		m^{y}_{\al}  
	\end{pmatrix}, \qquad \al\in\{i,e\} .
	\label{eq:fluid_flux}
\end{align}
Furthermore, the Maxwell's fluxes in $x$ and $y$ directions are given by,
\begin{align}
	\fb_m^x=\left(0,-E_z,E_y,0,B_z,-B_y\right)^\top \text{ and   }  
	\fb_m^y=\left(E_z,0,-E_x,-B_z,0,B_x\right)^\top
		\label{eq:max_flux}
\end{align}
respectively. The vector of primitive variables is $\Wb=(\Wb_i^\top,\Wb_e^\top,\Wb_m^\top)^\top,$ where $\Wb_\al=(\rho_\al,\ub_\al,p_\alpha)^\top$, and $\Wb_m=(\Bb,\Eb)^\top$. Obtaining the primitive variable $\mathbf{W}$ from the conservative variable $\mathbf{U}$ is not straightforward and requires solving a nonlinear equation; we follow the numerical algorithm given in~\cite{Bhoriya2020, Schneider1993}. 

The physically admissible solutions of this model should lie in the following set
\begin{equation*}
	\Omega = \{ \mathbf{U} \in\mathbb{R}^{16}: \ \rho_i>0, \rho_e>0, \ p_i>0,p_e>0, \ |\mathbf{u}_i|<1, |\mathbf{u}_e| < 1 \}.
\end{equation*}
For solutions in $\Omega$, the equations are hyperbolic with real eigenvalues of the flux Jacobian matrices $\mathnormal{A}^d=\dfrac{\partial \mathbf{f}^d}{\partial \mathbf{U}},$   $d \in \{x,y\}$. The expressions for the eigenvalues are given by,
\begin{equation}
\begin{aligned} 
	\mathbf{\Lambda}{(A^d)} & \equiv \left\{ \left(\mathbf{\Lambda}_{i}\right)_{1\times 5}, \left(\mathbf{\Lambda}_e\right)_{1\times 5}, \left(\mathbf{\Lambda}_m\right)_{1\times 6} \right\} \\
	&= 
	\biggl\{
	 	\frac{(1-c_i^2)u^{d}_i-(c_i/\Gamma_i) \sqrt{Q_i^d}}{1-c_i^2 |\mathbf{u}_i|^2},\
	u^{d}_i, \ u^{d}_i, \ u^{d}_i,
	\frac{(1-c_i^2)u^{d}_i+(c_i/\Gamma_i) \sqrt{Q_i^d}}{1-c_i^2 |\mathbf{u}_i|^2}, 
	\\  
	& \qquad	\frac{(1-c_e^2)u^{d}_e-(c_e/\Gamma_e) \sqrt{Q_e^d}}{1-c_e^2 |\mathbf{u}_e|^2},\
	u^{d}_e, \ u^{d}_e, \ u^{d}_e,
	\frac{(1-c_e^2)u^{d}_e+(c_e/\Gamma_e) \sqrt{Q_e^d}}{1-c_e^2 |\mathbf{u}_e|^2}, 
	\\ 
	& \qquad -1,-1,0,0,1,1
	\biggr\},
\end{aligned} \label{eq:eigenval}
\end{equation}
where $Q_\al^x=1-(u^{x}_{\al})^2-c_\al^2 ((u^{y}_{\al})^2+(u^{z}_{\alpha})^2)$,   $Q_\al^y=1-(u^{y}_{\al})^2-c_\al^2 ((u^{x}_{\al})^2+(u^{z}_{\alpha})^2)$ and $\al \in \{ i, \ e \}$.  
For $\mathbf{U} \in \Omega$ and $\gamma_\al \in (1,2]$ we have $c_\al<1$ which implies that $Q_\al^x > 0$ and $Q_\al^y > 0$; hence all eigenvalues are real.

Following~\cite{Bhoriya2023}, the entropy functions $\eta_\al$ and entropy flux $q_\al^d$ for the two-fluid relativistic plasma flow equations are defined as follows:

\begin{equation*}
	\eta_\al=-\frac{\rho_\al \Gamma_\al s_\al}{\gamma_\al -1} \qquad \text{ and } \qquad q^d_\al=-\frac{\rho_\al \Gamma_\al s_\al u^{d}_{\al}}{\gamma_\al -1}, \qquad  \text{ with } s_\al = \ln(p_\al \rho_\al^{-\gamma_\al}).  
\end{equation*} 
Here, $\al \in \{i,e\},$ and $d\in \{x,y\}$. We also introduce the {\em entropy variables}, 
$
\mathbf{V}_\alpha(\mathbf{U}_\alpha)=\frac{\partial  \eta_\al}{\partial \mathbf{U}_\alpha}$$
$
for $\alpha \in \{i,e\}$. A simple calculation results in the following expressions
\begin{equation}
	\mathbf{V}_\alpha=\begin{pmatrix}
		\dfrac{\gamma_\alpha- s_\alpha}{\gamma_\alpha -1} +{\beta_\alpha} \\
		{u_{x_\alpha} \Gamma_\alpha \beta_\alpha } \\ 
		{u_{y_\alpha} \Gamma_\alpha \beta_\alpha } \\
		{u_{z_\alpha} \Gamma_\alpha \beta_\alpha } \\
		-{\Gamma_\alpha \beta_\alpha}
	\end{pmatrix}, \qquad
	\text{with } \qquad
	\beta_\alpha = \dfrac{\rho_\alpha}{p_\alpha}.
	\label{eq:ent_var} 
\end{equation}
Then, following \cite{Bhoriya2023,bhoriya_entropydg_2023}, for the smooth solutions of \eqref{eq:TFRHD_fluid}, we get the entropy equality,
	\begin{equation}
		\dfrac{\partial \eta_\al}{\partial t} + \dfrac{\partial q^x_\al}{\partial x}  + \dfrac{\partial q^y_\al}{\partial y}=0. \label{entropy_equality}
	\end{equation}
For the non-smooth solutions, the entropy equality~\eqref{entropy_equality} is replaced with the entropy inequality
\begin{equation}
		\dfrac{\partial \eta_\al}{\partial t} + \dfrac{\partial q^x_\al}{\partial x}  + \dfrac{\partial q^y_\al}{\partial y} \le 0. \label{ent_inq}
\end{equation}   

\subsection{Electromagnetic divergence constraints} 
\label{subsec:div_cons}
The full set of Maxwell's equations consists of Faraday's induction equation \eqref{eq:max_B}, governing the evolution of the magnetic field, and  Ampere's law \eqref{eq:max_E}, which describes the evolution of the electric field. In addition, the magnetic field must be evolved in a divergence-free manner~\eqref{eq:div_B} for all time. Although this divergence constraint may appear to be an extra constraint, it can be seen as a constraint on the initial condition. Considering the divergence of the \eqref{eq:max_B}, we get,
\begin{equation}
\na \cdot \frac{\pa \Bb }{\pa t} + \nabla \cdot (\nabla \times \Eb) =  \frac{\pa ( \na \cdot \Bb )}{\pa t} =0.	
\label{eq:divB_evo_cont}
\end{equation} 
This ensures that the magnetic field will be divergence-free for all time provided the initial field is divergence-free. Similarly, for the electric field, the divergence of the Ampere's law \eqref{eq:max_E} results in,
\begin{equation}
	\na \cdot \frac{\pa \Eb }{\pa t} - \nabla \cdot (\nabla \times \Bb) =  \frac{\pa ( \na \cdot \Eb )}{\pa t} = - \na\cdot \Jb .	
\label{eq:divE_evo_cont}
\end{equation} 
Note that 
$$
-\na \cdot \Jb = - (r_i \nabla\cdot(\Gamma_i\rho_i \ub_i) + r_e \nabla\cdot(\Gamma_e \rho_e\ub_e)) =  \left(r_i \frac{\pa (\rho_i \Gamma_i)}{\pa t} + r_e \frac{\pa (\rho_i \Gamma_i)}{\pa t }\right) =\frac{\pa \rho_c}{\pa t }.
$$
This results in
\begin{equation}
	 \frac{\pa (\na \cdot \Eb) }{\pa t} = \frac{\pa \rho_c}{\pa t }.	
\end{equation} 
Hence, if Gauss's law is satisfied initially, it will be satisfied for all time. Thus, the divergence constraints \eqref{eq:div_B} and \eqref{eq:gauss_E} hold for the continuous case. At the discrete level, however, this is not the case as the typical schemes violate these constraints \cite{jiang1996origin}. Ignoring these constraints will also result in non-physical oscillations. A typical approach~\cite{Abgrall2014,kumar2012entropy,bond2016plasma,Meena2019,Bhoriya2023,bhoriya_entropydg_2023,wang2020} to overcome these difficulties is to consider the {\em perfectly hyperbolic Maxwell's equations} (PHM), proposed in~\cite{munz2000},
\begin{subequations}\label{eq:phm_eqn}
	\begin{align}
		\dfrac{\partial \mathbf{B}}{\partial t} + \nabla \times \mathbf{E} +\kappa \nabla \psi &= 0, \label{eq:phm_B}\\
		\dfrac{\partial \mathbf{E}}{\partial t} - \nabla \times \mathbf{B}  + \xi  \nabla \phi&= -{\mathbf{J}},\label{eq:phm_E}\\
		\frac{\partial \psi}{\pa t}+ \kappa \nabla \cdot \mathbf{B} &= 0,\label{eq:phm_div_B}\\
		\frac{\partial \phi}{\pa t} +\xi  \nabla \cdot \mathbf{E} &= \xi {\rho_c }. \label{eq:phm_gauss_E}
	\end{align}
\end{subequations}
Here, $\phi$ and $\psi$ are the {\em correction potentials}, and $\kappa$ and $\xi$ are the divergence errors propagation speed. The divergence constraints are then preserved in the limit when $\kappa$ and $\xi$ tend to infinity. One of the benefits of using the PHM formulation is that the extended system is still hyperbolic. 

In the next section, we aim to design numerical schemes, such that the discrete version of the divergence constraints is satisfied by the discretization of the equations \eqref{eq:max_B} and \eqref{eq:max_E}, similar to the situation for the continuous case.

\section{Semi-discrete  schemes}
\label{sec:semi_schemes}
Consider a two-dimensional rectangular domain  $[a,b]\times[c,d]$. We discretize the domain using a uniform mesh with a cell size of $\Delta x \times \Delta y$. The cell centers are given by $(x_i,y_j)$ with $x_i=a+(i+1/2)\Dx$, $y_j=c+(j+1/2)\Dy$, $0\le i< N_x$ and $0 \le j < N_y$. The cell vertices are defined as $(x_\iph,y_\jph)$, with $x_\iph=\frac{x_i +x_{i+1}}{2}$ and $y_\jph=\frac{y_j +y_{j+1}}{2}$. 

The semi-discrete scheme for the Eqns.~\eqref{eq:TFRHD_fluid}, \eqref{eq:max_B} and \eqref{eq:max_E} has the following form:
\begin{equation}
	\label{eq:semi_discrete}
	\frac{d \Ub_{i,j}}{d t} +\frac{1}{\Dx} \left(\Fb^x_{\iph,j} - \Fb^x_{\imh,j}\right) + \frac{1}{\Dy} \left(\Fb^y_{i,\jph} - \Fb^y_{i,\jmh}\right)  =  \mathcal{S}(\Ub_{i,j}),
\end{equation}  
where $\Fb^x$ and $\Fb^y$ are the numerical fluxes consistent with $\fb^x$ and $\fb^y$, respectively. Following~\cite{Bhoriya2023}, we will first describe the entropy-stable spatial discretization of the fluid equations \eqref{eq:TFRHD_fluid}, which is consistent with the entropy inequality \eqref{ent_inq}.

\subsection{Entropy stable discretization of the fluid equations}
The fluid part of the schemes~\eqref{eq:semi_discrete} can be written as,
\begin{equation}
	\label{eq:semi_discrete_fluid}
	\frac{d \Ub_{\al,i,j}}{d t} +\frac{1}{\Dx} \left(\Fb^x_{\al,\iph,j} - \Fb^x_{\al,\imh,j}\right) + \frac{1}{\Dy} \left(\Fb^y_{\al,i,\jph} - \Fb^y_{\al,i,\jmh}\right)  =  \mathcal{S}_\al(\Ub_{\al,i,j},\Ub_{m,i,j}).
\end{equation}  
where $\Fb_{\alpha}^x$ and $\Fb^y_\al$ are the numerical fluid flux consistent with the continuous fluid fluxes $\fb^x_\alpha$ and $\fb^y_\al$ (see Eqn. \eqref{eq:fluid_flux}), respectively.

Let us introduce the following notations for any grid function $z$,
$$
\overline{z}_{\iph,j} = \frac{z_{i+1,j}+z_{i,j}}{2},\qquad \overline{z}_{i,\jph}=\frac{z_{i,j+1}+z_{i,j}}{2},
$$
 and
 $$
 \jump{z}_{\iph,j} = z_{i+1,j}-z_{i,j},\qquad \jump{z}_{i,\jph}=z_{i,j+1}-z_{i,j}. 
 $$
Then the first-order entropy stable flux is given by,
\begin{equation}
	\begin{aligned}
		\mathbf{F}_{\alpha,i+\frac{1}{2},j}^{x,1} =\tilde{\mathbf{{F}}}_{\alpha,i+\frac{1}{2},j}^x - \frac{1}{2} \textbf{D}_{\alpha,i+\frac{1}{2},j}^x[\![ \mathbf{V}_\alpha]\!]_{i+\frac{1}{2},j},
		\\
		\mathbf{F}_{\alpha,i,j+\frac{1}{2}}^{y,1} = \tilde{\mathbf{{F}}}_{\alpha,i,j+\frac{1}{2}}^y - \frac{1}{2} \textbf{D}_{\alpha,i,j+\frac{1}{2}}^y[\![ \mathbf{V}_\alpha]\!]_{i,j+\frac{1}{2}}.
		\label{es_numflux}
	\end{aligned}
\end{equation}
where  $\tilde{\mathbf{{F}}}_{\alpha,i+\frac{1}{2},j}^x$ and  $\tilde{\mathbf{{F}}}_{\alpha,i,j+\frac{1}{2}}^y$ in Eqn.~\eqref{es_numflux} are entropy conservative numerical fluxes given by~\cite{Bhoriya2020},
\begin{gather}
	\mathbf{\tilde F}^x_{\alpha,i+\frac{1}{2},j}  = \begin{pmatrix}
		{\rho_\alpha^{\ln}} \overline{m_{x_\alpha} } \vspace{0.2cm} \\
		\frac{1}{\overline{\beta_\alpha}} \big( \frac{\overline{\beta_\alpha} \overline{m_{x_\alpha}}}{\bar{\Gamma_\alpha}}F_{\alpha,5}^x + \overline{\rho_\alpha}
		\big)	\\
		\frac{\overline{m_{y_\alpha}}}{\overline{\Gamma_\alpha}}F_{\alpha,5}^x \\ 
		\frac{\overline{m_{z_\alpha}}}{\overline{\Gamma_\alpha}}F_{\alpha,5}^x 
		\vspace{0.2cm} \\ 
		\frac{-\overline{\Gamma_\alpha}{\big( k_{\alpha} {\rho_\alpha^{\ln}}\overline{m_{x_\alpha}}
				+\frac{\overline{m_{x_\alpha}}\overline{\rho_\alpha}}{\overline{\beta_\alpha}}}
			\big)} {\big(    
			\overline{m_{x_\alpha}}^2 
			+ \overline{m_{y_\alpha}}^2 
			+ \overline{m_{z_\alpha}}^2 
			-{(\overline{\Gamma_\alpha})}^2  \big)}
	\end{pmatrix}_{i + \frac{1}{2},j}, \  	
 \mathbf{\tilde F}^y_{\alpha,i,j+\frac{1}{2}} = \begin{pmatrix}
			{\rho_\alpha^{\ln}} \overline{m_{y_\alpha} } \vspace{0.2cm} \\
			%
			%
			\frac{\overline{m_{x_\alpha}}}{\overline{\Gamma_\alpha}}F_{{\alpha,5}}^y	\\
			\frac{1}{\overline{\beta_\alpha}} \bigg( \frac{\bar{\beta_\alpha} \overline{m_{y_\alpha}}}{\overline{\Gamma_\alpha}}F_{{\alpha,5}}^y + \overline{\rho_\alpha}
			\bigg)\\ 
			\frac{\overline{m_{z_\alpha}}}{\overline{\Gamma_\alpha}}F_{{\alpha,5}}^y
			\vspace{0.2cm} \\ 
			\frac{-\overline{\Gamma_\alpha}{\left( k_{\alpha} {\rho_\alpha^{\ln}} \overline{m_{y_\alpha} } +
					\frac{\overline{m_{y_\alpha}} \ \overline{\rho_\alpha}}{\bar{\beta_\alpha}}\right)}
			} {\left(    \overline{m_{x_\alpha}}^2 + \overline{m_{y_\alpha}}^2 + \overline{m_{z_\alpha}}^2 -(\overline{\Gamma_\alpha})^2  \right)}
		\end{pmatrix}_{i, j+\frac{1}{2}},
 \label{eq:num_flux_ent_conser}
\end{gather}
where, $\beta_\alpha=\frac{\rho_\alpha}{p_\alpha}$,  $k_{\alpha}=\left(\frac{1}{\gamma-1} \dfrac{1}{{\beta^{\ln}_\alpha}}+1  \right)$, and $(\cdot)^{\ln}=\frac{[\![ (\cdot) ]\!]}{[\![ \log (\cdot)]\!]}$ is the logarithmic average. 
One can easily verify that $\tilde{{\mathbf F}}_\alpha^x$ is consistent with the flux $\mathbf{f}_{\alpha}^x$ and  $\tilde{{\mathbf F}}_\alpha^y$ is consistent with ${\mathbf{f}}^y_\alpha$, see~\cite{Bhoriya2023} for more details. Furthermore, these numerical fluxes satisfy Tadmor's conditions,
\begin{align}
	\jump{\Vb_\alpha}_{\iph,j} \cdot \mathbf{\tilde F}^x_{\alpha,i+\frac{1}{2},j}=\jump{\phi_\al^x},\\
	\jump{\Vb_\alpha}_{i,\jph} \cdot \mathbf{\tilde F}^y_{\alpha,i,j+\frac{1}{2}}=\jump{\phi_\al^y}
\end{align}
for {\em entropy potentials} $\phi_\al^d = \rho_\al\Gamma_\al u^d_\alpha$.  \rev{Hence, they are entropy conservative as shown in \cite{Fjordholm2012,Tadmor1987} and also second order accurate since the entropy conservative fluxes $\mathbf{\tilde F}^x, \mathbf{\tilde F}^y$ are symmetric. E.g., the flux $\mathbf{\tilde F}^x_{\alpha,i+\frac{1}{2},j}$ depends on $\mathbf U_{\alpha,i,j}$, $\mathbf U_{\alpha,i+1,j}$ and the flux does not change if the states are interchanged.} 

To make the scheme entropy stable, numerical diffusion is added by considering matrices $\textbf{D}^x_{\alpha,i+\frac{1}{2},j}$ and $\textbf{D}^y_{\alpha, i,j+\frac{1}{2}}$, which are symmetric positive definite and are given by
\begin{equation}  \label{diffusiontype}
	\mathbf{D}_{\alpha,i+\frac{1}{2},j}^x = {\mathbf{\tilde{R}}}_{\alpha,i+\frac{1}{2},j}^x \Lambda_{\alpha,i+\frac{1}{2},j}^x {\mathbf{\tilde{R}}}_{\alpha,i+\frac{1}{2},j}^{x \top}, \text{ \ \ and \ \ } \textbf{D}_{\alpha,i,j+\frac{1}{2}}^y = {\mathbf{\tilde{R}}}_{\alpha,i,j+\frac{1}{2}}^y \Lambda_{\alpha,i,j+\frac{1}{2}}^y {\mathbf{\tilde{R}}}_{\alpha,i,j+\frac{1}{2}}^{y \top},
\end{equation}
where, ${\mathbf{\tilde{R}}^d}_\alpha$  are matrices of the entropy scaled right eigenvectors~\cite{Bhoriya2020} and ${\Lambda^d_\alpha} = \{\Lambda_{\alpha_k}^d: 1 \leq k \leq 5 \},\, d \in \{x,y\}$ are $5 \times 5$ non-negative diagonal matrices given by, 
\[
{\Lambda^d_\alpha}=  
\left( \max_{1 \leq k \leq 5} |\Lambda_{\alpha_k}^d|\right) \mathbf{I}_{5 \times 5}. 
\]
where $\Lambda_{\alpha_k}^d$ is the $k$-th eigenvalue of the fluid $\al$ in direction $d$ given in Eqn.~\eqref{eq:eigenval}. The scheme with the numerical flux ~\eqref{es_numflux} is entropy stable but only first-order accurate. To achieve second order accuracy, we reconstruct the jumps $\jump{\Vb_\al}_{\iph,j}$ and $\jump{\Vb_\al}_{i,\jph}$. Following~\cite{Bhoriya2023}, we proceed as follows in $x$-direction:

\begin{steps}
   \item We first define {\em scaled entropy variables}  
\[
\mathcal{W}_{\alpha,{m,j}}^{x,\pm}\,=\, (\mathbf{\tilde{R}}^{x}_{\alpha,{i\pm\frac{1}{2},j}} )^\top\mathbf{V}_{\alpha,{m,j}}, \qquad \textrm{$m$ are neighbours of cell $(i,j)$ along $x$}
\]
\item We perform the second-order MinMod-based reconstruction procedure on $\mathcal{W}_{\alpha,{m,j}}$ to obtain the reconstructed value $\tilde{\mathcal{W}}_{\al,m,j}^{x,\pm}$
\[
\mathcal{\tilde W}_{\alpha,i,j}^{x,\pm} = P_{i,j}^{x,\pm}\left(x_{i \pm \frac{1}{2}}\right).
\]
In general, any second-order {\em sign preserving} reconstruction process can be used.
\item Compute the traces $\tilde{\Vb}^{x,\pm}_{\al,\iph,j}$ as follows:
$$\mathbf{\tilde{V}}_{\alpha,{i,j}}^{x,\pm}\,=\, \left\lbrace (\mathbf{\tilde{R}}^{x}_{\alpha,{i\pm\frac{1}{2},j}})^\top\right\rbrace ^{(-1)}\tilde{\mathcal{W}}_{\alpha,{i,j}}^{x,\pm}$$ 
\end{steps}
Now replacing the jumps $\jump{\Vb_\al}_{\iph,j}$ with 
$$[\![ \mathbf{\tilde{V}}^x_\alpha]\!]_{i+\frac{1}{2},j}\,=\,\mathbf{\tilde{V}}^{x,-}_{\alpha,{i+1,j}}\,-\,\mathbf{\tilde{V}}^{x,+}_{\alpha,{i,j}}.$$
in \eqref{es_numflux}, we get, the second-order entropy stable flux in the $x-$direction
\begin{equation}
	\label{eq:ent_stable_numflux_x_o2}
		\mathbf{F}_{\alpha,i+\frac{1}{2},j}^{x,2} = \tilde{\Fb}^x_{\al,\iph,j}-\frac{1}{2}\Db^x_{\al,\iph,j}\jump{\tilde{\Vb}^x_{\al}}_{\iph,j}
\end{equation}
Here, we only illustrate the procedure for the $x-$direction. We can proceed in the $y$-direction to define,
\begin{equation}
	\label{eq:ent_stable_numflux_y_o2}
		\mathbf{F}_{\alpha,i,j+\frac{1}{2}}^{y,2} = \tilde{\Fb}^y_{\al,i,\jph}-\frac{1}{2}\Db^y_{\al,i,\jph}\jump{\tilde{\Vb}^y_{\al}}_{i,\jph}
\end{equation}

Following \cite{Bhoriya2023}, we now have the following result:
\begin{thm} The numerical scheme~\eqref{eq:semi_discrete_fluid} with the numerical fluxes~\eqref{eq:ent_stable_numflux_x_o2} and~\eqref{eq:ent_stable_numflux_y_o2} is second-order accurate and entropy stable, i.e., it satisfies the discrete version of the entropy inequality~\eqref{ent_inq},
\begin{equation}
	\frac{d}{dt}  \eta_\al(\mathbf{U}_{\alpha,i,j})  +\frac{1}{\Delta x} \left( \hat{q}_{\alpha,i+\frac{1}{2},j}^x - \hat{q}_{\alpha,i-\frac{1}{2},j}^x\right)+\frac{1}{\Delta y}\left( \hat{q}_{\alpha,i,j+\frac{1}{2}}^y - \hat{q}_{\alpha,i,j-\frac{1}{2}}^y\right) \le 0, \ \ \ \alpha \in \{i,e\},
\label{eq:discrete_ent}
\end{equation}
where $\hat{q}^{x}_{\alpha,i+\frac{1}{2},j}$, and $\hat{q}^{y}_{\alpha, i,j+\frac{1}{2}}$ are the numerical entropy flux functions consistent with the continuous entropy fluxes $q^x_\al$, and $q^y_\al$, respectively. 
\end{thm}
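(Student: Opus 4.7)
The plan is to treat the two claims of the theorem separately, handling the accuracy first and then the entropy inequality, and in both cases reducing the two-dimensional statement to one-dimensional arguments applied in the $x$ and $y$ sweeps (the source term $\mathcal{S}_\alpha$ carries no flux divergence and therefore plays no role in the accuracy or entropy analysis beyond a harmless contribution of $\mathbf{V}_\alpha \cdot \mathcal{S}_\alpha$ on the right-hand side, which vanishes by direct computation).

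For second-order accuracy, I would first recall that any entropy conservative flux of the Tadmor form is automatically second-order accurate when it is symmetric in its two arguments: the flux $\tilde{\mathbf{F}}^x_{\alpha,\iph,j}$ depends on $\mathbf{U}_{\alpha,i,j}$ and $\mathbf{U}_{\alpha,i+1,j}$ symmetrically, so a Taylor expansion around the interface cancels all linear correction terms, leaving an $\mathcal{O}(\Delta x^2)$ consistency error. Then I would argue that replacing $\jump{\mathbf{V}_\alpha}$ by the reconstructed jump $\jump{\tilde{\mathbf{V}}^x_\alpha}$ in the dissipation term preserves second-order accuracy, because the MinMod reconstruction applied to the scaled entropy variables $\mathcal{W}$ is itself second-order (a piecewise linear reconstruction), and transforming back via the entropy-scaled right eigenvector matrix is a smooth invertible change of variables. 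The same argument in the $y$-sweep completes this part.

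For entropy stability, the key step is to derive a cell entropy equality/inequality by taking the inner product of \eqref{eq:semi_discrete_fluid} with $\mathbf{V}_{\alpha,i,j}$. Using the defining relation $\mathbf{V}_\alpha^\top \,\mathrm{d}\mathbf{U}_\alpha = \mathrm{d}\eta_\alpha$, the time-derivative term yields $\mathrm{d}\eta_\alpha/\mathrm{d}t$. For the flux difference, I would split $\mathbf{F}^{x,2} = \tilde{\mathbf{F}}^x - \tfrac{1}{2}\mathbf{D}^x \jump{\tilde{\mathbf{V}}^x_\alpha}$ and handle the two pieces separately. Using Tadmor's identity $\jump{\mathbf{V}_\alpha}\cdot \tilde{\mathbf{F}}^x = \jump{\phi_\alpha^x}$ stated just above \eqref{eq:ent_stable_numflux_x_o2}, the entropy conservative part telescopes to give a consistent numerical entropy flux $\hat{q}_\alpha^{x,\mathrm{EC}}$. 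The dissipative contribution to the local entropy balance at face $(\iph,j)$ is proportional to $-\tfrac{1}{2}\jump{\mathbf{V}_\alpha}^\top \mathbf{D}^x \jump{\tilde{\mathbf{V}}^x_\alpha}$. After moving to the scaled variables, using $\mathbf{D}^x = \tilde{\mathbf{R}}^x \Lambda^x (\tilde{\mathbf{R}}^x)^\top$ with $\Lambda^x$ diagonal non-negative, this becomes $-\tfrac{1}{2}\sum_k \Lambda^x_{\alpha_k}\,\jump{\mathcal{W}^x_{\alpha,k}}\jump{\tilde{\mathcal{W}}^x_{\alpha,k}}$. This is absorbed into the numerical entropy flux as a dissipation correction, and the remaining sign condition I need is that $\jump{\mathcal{W}}\jump{\tilde{\mathcal{W}}}\ge 0$ component-wise.

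The heart of the matter, and the step I expect to be the main obstacle, is therefore the \emph{sign property} of the MinMod reconstruction on the scaled entropy variables: one must verify that the reconstructed jump $\jump{\tilde{\mathcal{W}}^x_{\alpha,k}}_{\iph,j}$ has the same sign as the raw jump $\jump{\mathcal{W}^x_{\alpha,k}}_{\iph,j}$, and moreover that $|\jump{\tilde{\mathcal{W}}}| \le C |\jump{\mathcal{W}}|$ so that the dissipation retains the correct form. This is precisely the sign-preservation property of MinMod invoked in Step~2 of the reconstruction procedure and is the reason scaled entropy variables, rather than primitive or conservative variables, are reconstructed. Once the sign property is established, summing the face contributions around cell $(i,j)$ and grouping the entropy-conservative part $\hat{q}^{x,\mathrm{EC}}_\alpha$ together with the non-positive dissipation correction into a single consistent numerical entropy flux $\hat{q}^x_\alpha$ (and analogously $\hat{q}^y_\alpha$) produces \eqref{eq:discrete_ent} with the inequality, as required. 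Finally, one checks that consistency $\hat{q}^x_\alpha(\mathbf{U},\mathbf{U}) = q^x_\alpha(\mathbf{U})$ follows from consistency of $\tilde{\mathbf{F}}^x_\alpha$ and from $\jump{\tilde{\mathcal{W}}}\to 0$ as the data becomes constant.
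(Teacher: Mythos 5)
Your proposal is correct and follows exactly the standard Tadmor/Fjordholm--Mishra--Tadmor argument (entropy-conservative flux plus eigenvector-scaled dissipation, with the sign property of the MinMod reconstruction on the scaled entropy variables $\mathcal{W}$ as the crucial step, and the observation that $\mathbf{V}_\alpha\cdot\mathcal{S}_\alpha=0$ since the Lorentz force contributes $\mathbf{u}_\alpha\cdot(\mathbf{u}_\alpha\times\mathbf{B})=0$); the paper itself supplies no proof and simply defers to the cited reference, which uses this same route. The only superfluous element is your requirement $|\jump{\tilde{\mathcal{W}}}|\le C|\jump{\mathcal{W}}|$: the entropy inequality needs only $\jump{\mathcal{W}_k}\jump{\tilde{\mathcal{W}}_k}\ge 0$ componentwise, since that alone makes $\jump{\mathbf{V}_\alpha}^\top\mathbf{D}\jump{\tilde{\mathbf{V}}_\alpha}=\sum_k\Lambda_k\jump{\mathcal{W}_k}\jump{\tilde{\mathcal{W}}_k}$ nonnegative.
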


\begin{remark}
The above result holds for any consistent second-order discretization of Maxwell's equations.
\end{remark}
This completes our description of the second-order entropy stable scheme for the fluid part. 


\subsection{Multidimensional approximate Riemann solver for Maxwell's equations}
\label{subsec:multid_riem_solver}
The Maxwell parts of the semi-discrete scheme \eqref{eq:semi_discrete} can be written as,
\begin{equation}
	\label{eq:semi_discrete_max}
	\frac{d \Ub_{m,i,j}}{d t} +\frac{1}{\Dx} \left(\Fb^x_{m,\iph,j} - \Fb^x_{m,\imh,j}\right) + \frac{1}{\Dy} \left(\Fb^y_{m,i,\jph} - \Fb^y_{m,i,\jmh}\right)  =  \mathcal{S}_m(\Ub_{i,i,j},\Ub_{e,i,j}).
\end{equation}  
Here $\Fb^x_m$ and $\Fb^y_m$ are the numerical fluxes consistent with $\fb_m^x$ and $\fb^y_m$. Unlike the standard Riemann solver, where only two states (left and right) are considered, here we will use a multidimensional Riemann solver at each vertex and then use these to define fluxes on the faces. Given a vertex $(x_{\iph},y_\jph)$, let us define,
\begin{align*}
	\label{direction_def}
	&\Ub^{SW}_{m,i+\frac{1}{2},j+\frac{1}{2}} =  \Ub_{m,i,j},  &\Ub^{SE}_{m,i+\frac{1}{2},j+\frac{1}{2}} 	=  \Ub_{m,i+1,j},\\
	&\Ub^{NE}_{m,i+\frac{1}{2},j+\frac{1}{2}} =  \Ub_{m,i+1,j+1}, 	&\Ub^{NW}_{m,i+\frac{1}{2},j+\frac{1}{2}} = \Ub_{m,i,j+1}.
\end{align*}
which are the four states around the vertex.
\begin{figure}[!htbp]
	\begin{center}
		\includegraphics[width=0.5\textwidth,clip=]{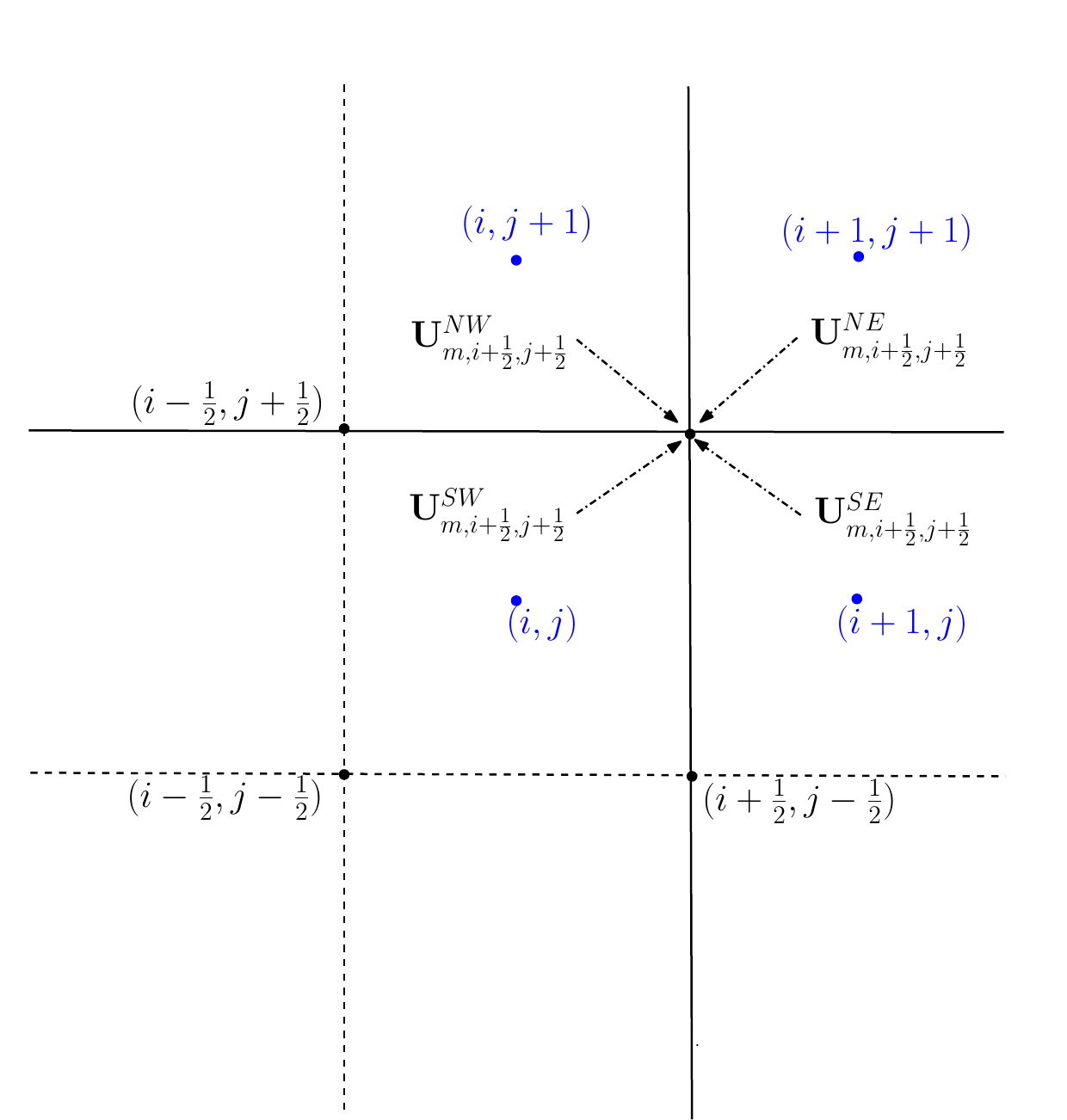}

		\caption{Stencils for multidimensional Riemann solver.}
	\label{fig:grid_tfrhd}
	\end{center}
\end{figure}
Using these, we define the vertex values (See Figure~\ref{fig:grid_tfrhd}),
and then use these vertex values to define the flux across the edges.  Let us first define the following notations:
\begin{equation}\label{eq:max_four_face_averages}
	\overline{{\Ub}}_{m,\iph,\jph}=\frac{\Ub^{SW}_{m,i+\frac{1}{2},j+\frac{1}{2}}+\Ub^{SE}_{m,i+\frac{1}{2},j+\frac{1}{2}}  +\Ub^{NE}_{m,i+\frac{1}{2},j+\frac{1}{2}}+ \Ub^{NW}_{m,i+\frac{1}{2},j+\frac{1}{2}}}{4},
\end{equation}
\begin{equation}
	\label{eq:max_face_averages}
	\overline{{\Ub}}_{m,\iph,j}=\frac{\Ub^{SW}_{m,i+\frac{1}{2},j+\frac{1}{2}} + \Ub^{SE}_{m,i+\frac{1}{2},j+\frac{1}{2}} }{2},\quad	\overline{{\Ub}}_{m,i,\jph} = \frac{\Ub^{NW}_{m,i+\frac{1}{2},j+\frac{1}{2}}  +\Ub^{SW}_{m,i+\frac{1}{2},j+\frac{1}{2}} }{2}.
\end{equation}
Following \cite{balsara2014,balsara2010,Balsara2016aderweno,chandrashekar2020}, the multidimensional local Lax-Friedrich scheme for the vertex values of ${E}_z$ and ${B}_z$ are defined as,
\begin{equation}\label{eq:multid_E}
	\tilde{E}_{z,\iph,\jph}= \bar{E}_{z,\iph,\jph} +\frac{1}{2}\left( (\bar{B}_{y,i+1,\jph}-\bar{B}_{y,i,\jph})  - (\bar{B}_{x,\iph,j+1}-\bar{B}_{x,\iph,j})\right), 
\end{equation}
and
\begin{equation}\label{eq:multid_B}
	\tilde{B}_{z,\iph,\jph}= \bar{B}_{z,\iph,\jph}-\frac{1}{2}\left((\bar{E}_{x,\iph,j+1}-\bar{E}_{x,\iph,j} )	-   (\bar{E}_{y,i+1,\jph}-\bar{E}_{y,i,\jph} )\right).
\end{equation}
Using these values, the $x$-directional flux on the edge with vertices $(x_{\iph},y_\jph)$ and $(x_\iph,y_\jmh)$ is defined as
\begin{equation}
	\label{eq:multiD_flux_x}
	\Fb_{m,\iph,j}^x= \begin{pmatrix}
		0\\
		-\dfrac{1}{2} \left(\tilde{E}_{z,\iph,\jph} +\tilde{E}_{z,\iph,\jmh} \right)  \\
		\tilde{F}_{m,\iph,j}^{x,3}  \\
		0\\
		\dfrac{1}{2}\left(\tilde{B}_{z,\iph,\jph} +\tilde{B}_{z,\iph,\jmh}\right)\\
		{\tilde{F}}_{m,\iph,j}^{x,6}
	\end{pmatrix}
\end{equation}
where, $\tilde{F}_{m,\iph,j}^{x,3}$ and $\tilde{F}_{m,\iph,j}^{x,6}$ are obtained using one-dimensional Rusanov's solver as follows:
\begin{align}
   {F}_{m,\iph,j}^{x,3}  & =\left(  \dfrac{{E}_{y,i,j} + {E}_{y,i+1,j}}{2}\right)
   - 
   \dfrac{1}{2} \left( {B}_{z,i+1,j} - {B}_{z,i,j} \right),
   \label{eq:oned_rus_x_Bz}
   \\
    \tilde{F}_{m,\iph,j}^{x,6} &=
   -\left(
   \dfrac{{B}_{y,i,j} + {B}_{y,i+1,j}}{2}
   \right)
   - 
   \dfrac{1}{2} \left( {E}_{z,i+1,j} - {E}_{z,i,j} 
   \right).
   \label{eq:oned_rus_x_Ez}
\end{align}
We note that the extreme eigenvalues of Maxwell's part are $1$ and $-1$. Hence, the numerical flux presented here is equivalent to multidimensional HLL numerical flux at each vertex. Furthermore, following \cite{chandrashekar2020, Balsara2016}, we comment that the numerical flux \eqref{eq:multiD_flux_x} is consistent with the one-dimensional numerical flux.

Similarly, the $y$-directional numerical flux  $\Fb_{m,i,\jph}^y$ is given by, 
\begin{equation}
	\label{eq:multiD_flux_y}
\Fb_{m,i,\jph}^y=\begin{pmatrix}
	 \dfrac{1}{2} \left(\tilde{E}_{z,\iph,\jph} +\tilde{E}_{z,\imh,\jph} \right)\\
	0
    \\
	\tilde{F}_{m,i,\jph}^{y,3}\\
	-\dfrac{1}{2} \left(\tilde{B}_{z,\iph,\jph} +\tilde{B}_{z,\imh,\jph} \right)\\
	0\\
	\tilde{F}_{m,i,\jph}^{y,6}
\end{pmatrix}.	
\end{equation}
where, $\tilde{F}_{m,i,\jph}^{y,3}$ and $\tilde{F}_{m,i,\jph}^{y,6}$ are obtained using one-dimensional Rusanov's solver as follows:
\begin{align}
   \tilde{F}_{m,i,\jph}^{y,3} &= 
   - \left(
   \dfrac{{E}_{x,i,j} + {E}_{x,i,j+1}}{2}
   \right)
   - 
   \dfrac{1}{2} \left( {B}_{z,i,j+1} - {B}_{z,i,j} \right),
   \label{eq:oned_rus_y_Bz}
   \\
 \tilde{F}_{m,i,\jph}^{y,6} &= 
    \left(
   \dfrac{{B}_{x,i,j} + {B}_{x,i,j+1}}{2}
   \right)
   - 
   \dfrac{1}{2} \left( {E}_{z,i,j+1} - {E}_{z,i,j} \right).
   \label{eq:oned_rus_y_Ez}
\end{align}

\subsection{Second-order reconstruction for Maxwell's equations}
\label{subsec:max_2nd_order}
To achieve second-order accuracy, we need to reconstruct the values at each vertex $(x_\iph,y_\jph)$ using piecewise linear polynomials. We achieve this by performing the reconstruction diagonally (see Figure \ref{fig:o2_recon}), i.e., we define,
\begin{align*}
	\hat{\Ub}^{SW}_{m,i+\frac{1}{2},j+\frac{1}{2}} 
	&=  
	\Ub_{m,i,j} + \frac{1}{2} \text{MinMod}
	\Big(
	\Ub_{m,i-1,j-1}, \Ub_{m,i,j}, \Ub_{m,i+1,j+1} 
	\Big),
	\\
	\hat{\Ub}^{SE}_{m,i+\frac{1}{2},j+\frac{1}{2}} 
	&=  
	\Ub_{m,i+1,j} - \frac{1}{2} \text{MinMod}
	\Big(
	\Ub_{m,i+2,j-1},\Ub_{m,i+1,j}, \Ub_{m,i,j+1},  
	\Big),
	\\
	\hat{\Ub}^{NE}_{m,i+\frac{1}{2},j+\frac{1}{2}} 
	&=  
	\Ub_{m,i+1,j+1} - \frac{1}{2} \text{MinMod}
	\Big(
	\Ub_{m,i+2,j+2},\Ub_{m,i+1,j+1}, \Ub_{m,i,j} 
	\Big),
	\\
	\hat{\Ub}^{NW}_{m,i+\frac{1}{2},j+\frac{1}{2}} 
	&=  
	\Ub_{m,i,j+1} + \frac{1}{2} \text{MinMod}
	\Big(
	\Ub_{m,i-1,j+2},\Ub_{m,i,j+1}, \Ub_{m,i+1,j}  
	\Big).
\end{align*}\label{hat_def}
where, 
\begin{align}
   \text{MinMod} (a,b,c) = \begin{cases}
       \text{sign}(b-a)   \min \{|c-b|,|b-a|,\} & \text{if} \quad \text{sign}(b-a) = \text{sign}(c-b),
       \\
       0, & \text{otherwise.}
   \end{cases}
   \label{eq:minmod}
\end{align}
\begin{figure}[ht]
\begin{center}
	\includegraphics[width=0.5\textwidth]{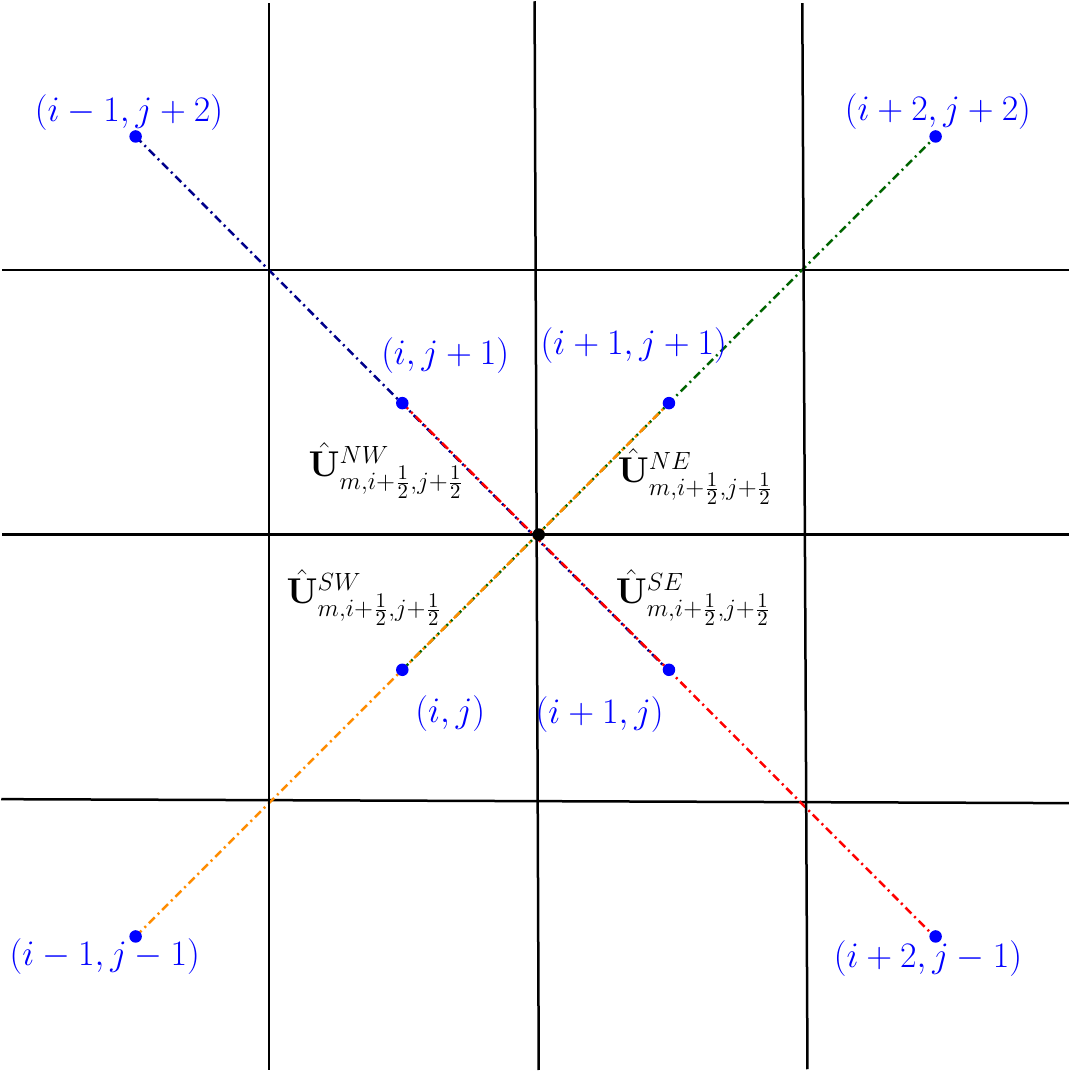}
	\caption{Stencil of second order reconstruction at the vertex $\left( i+\frac{1}{2}, j+\frac{1}{2} \right)$.}
	\label{fig:o2_recon}
\end{center}
\end{figure}
Combining  these diagonal traces with multidimensional Riemann solver based values~\eqref{eq:multid_B} and~\eqref{eq:multid_E}, we define the vertex values $\tilde{E}_{z,\iph,\jph}$ 
\begin{equation}
\begin{aligned}
    \tilde{E}_{z,\iph,\jph}
      & = \dfrac{
        \hat{E}^{SW}_{z,i+\frac{1}{2},j+\frac{1}{2}}
      + \hat{E}^{SE}_{z,i+\frac{1}{2},j+\frac{1}{2}}
      + \hat{E}^{NE}_{z,i+\frac{1}{2},j+\frac{1}{2}}
      + \hat{E}^{NW}_{z,i+\frac{1}{2},j+\frac{1}{2}}
        }{4}
      \\ 
      &+ 
      \frac{1}{2}
         \left(
            \dfrac{\hat{B}^{SE}_{y,i+\frac{1}{2},j+\frac{1}{2}}
                +  \hat{B}^{NE}_{y,i+\frac{1}{2},j+\frac{1}{2}}}{2}
                -
            \dfrac{\hat{B}^{SW}_{y,i+\frac{1}{2},j+\frac{1}{2}}
                +  \hat{B}^{NW}_{y,i+\frac{1}{2},j+\frac{1}{2}}}{2}  
         \right)
      \\ 
      &- 
      \frac{1}{2}
         \left(
            \dfrac{\hat{B}^{NW}_{x,i+\frac{1}{2},j+\frac{1}{2}}
                +  \hat{B}^{NE}_{x,i+\frac{1}{2},j+\frac{1}{2}}}{2}
                -
            \dfrac{\hat{B}^{SW}_{x,i+\frac{1}{2},j+\frac{1}{2}}
                +  \hat{B}^{SE}_{x,i+\frac{1}{2},j+\frac{1}{2}}}{2}  
         \right),	
\label{eq:multid_Ez_2nd}
\end{aligned}
\end{equation}
and $\tilde{B}_{z,\iph,\jph}$,
\begin{equation}
\begin{aligned}
    \tilde{B}_{z,\iph,\jph}
      & = \dfrac{1}{4} \left( 
        \hat{B}^{SW}_{z,i+\frac{1}{2},j+\frac{1}{2}}
      + \hat{B}^{SE}_{z,i+\frac{1}{2},j+\frac{1}{2}}
      + \hat{B}^{NE}_{z,i+\frac{1}{2},j+\frac{1}{2}}
      + \hat{B}^{NW}_{z,i+\frac{1}{2},j+\frac{1}{2}} \right)
      \\ 
      & -
      \frac{1}{2}
         \left(
            \dfrac{\hat{E}^{NW}_{x,i+\frac{1}{2},j+\frac{1}{2}}
                +  \hat{E}^{NE}_{x,i+\frac{1}{2},j+\frac{1}{2}}}{2}
                -
            \dfrac{\hat{E}^{SW}_{x,i+\frac{1}{2},j+\frac{1}{2}}
                +  \hat{E}^{SE}_{x,i+\frac{1}{2},j+\frac{1}{2}}}{2}  
         \right)
      \\ 
      & + 
      \frac{1}{2}
         \left(
            \dfrac{\hat{E}^{SE}_{y,i+\frac{1}{2},j+\frac{1}{2}}
                +  \hat{E}^{NE}_{y,i+\frac{1}{2},j+\frac{1}{2}}}{2}
                -
            \dfrac{\hat{E}^{SW}_{y,i+\frac{1}{2},j+\frac{1}{2}}
                +  \hat{E}^{NW}_{y,i+\frac{1}{2},j+\frac{1}{2}}}{2}  
         \right).	
\label{eq:multid_Bz_2nd}
\end{aligned}
\end{equation}
To compute second-order accurate $\tilde{F}_{\iph,j}^{x,3} $ and $\tilde{F}_{\iph,j}^{x,6}$, we use standard {\em MinMod} limiter in $x$-direction and one-dimensional Rusanov's solver. We first compute the traces in $x$- direction,
\begin{align}
   {\Ub}^{-}_{m,\iph,j}
   &=
   \Ub_{m,i,j} + \dfrac{1}{2} 
   \text{MinMod} \Big(
     \Ub_{m,i-1,j},\Ub_{m,i,j}, \Ub_{m,i+1,j} 
    \Big),
    \\
    \Ub^{+}_{m,\iph,j}
    &=
   \Ub_{m,i+1,j} - \dfrac{1}{2} 
   \text{MinMod} \Big(
    \Ub_{m,i,j},\Ub_{m,i+1,j},  \Ub_{m,i+2,j}
    \Big).
\end{align}
Now using one-dimensional Rusanov's solver \eqref{eq:oned_rus_x_Bz} and \eqref{eq:oned_rus_x_Ez}, to define, 
\begin{align}
\tilde{F}_{m,\iph,j}^{x,3}  &= \dfrac{1}{2} \left({E}^{-}_{y,\iph,j} + {E}^+_{y,\iph,j}\right)
   - 
   \dfrac{1}{2} \left( {B}^+_{z,\iph,j} - {B}^-_{z,\iph,j} \right),
   \label{eq:oned_rus_x_Bz_2nd}
   \\
   \tilde{F}_{m,\iph,j}^{x,6}  &=  -\dfrac{1}{2}
   \left( {B}^-_{y,\iph,j} + {B}^+_{y,\iph,j} \right)
   - 
   \dfrac{1}{2} \left( {E}^+_{z,\iph,j} - {E}^-_{z,\iph,j} \right).
   \label{eq:oned_rus_x_Ez_2nd}
\end{align}
Similarly, we can proceed in $y$-direction. This completes the second-order description of the flux terms.
%
\subsection{Discrete divergence constraints}
\label{subsec:semi_div_cons}
To discuss the evolution of the discrete divergence constraints, let us first define the discrete 2-D divergence of a grid vector function $\Ab=(A_x, A_y, A_z)$ at vertex $(x_\iph,y_\jph)$ as,

	\begin{align*}
	\left(\nabla\cdot\Ab\right)_{i+\frac{1}{2},j+\frac{1}{2}} 
	& = 
	\left(\partial_x A_x\right)_{i+\frac{1}{2},j+\frac{1}{2}} + \left(\partial_y A_y\right)_{i+\frac{1}{2},j+\frac{1}{2}} \\
	& =
	\dfrac{1}{2} 
	\Bigg(
	\frac{{A}_{x,i+1,j+1}-{A}_{x,i,j+1} }{\Delta x}
	+
	\frac{{A}_{x,i+1,j}-{A}_{x,i,j} }{\Delta x}
	\Bigg)\\
	& +
	\dfrac{1}{2} 
	\Bigg(
	\frac{{A}_{y,i+1,j+1}-{A}_{y,i+1,j} }{\Delta y}
	+
	\frac{{A}_{y,i,j+1}-{A}_{y,i,j} }{\Delta y}
	\Bigg). 
\end{align*}
We now have the following result.

\begin{thm}[Evolution of the divergence of magnetic field]
	\label{thm:divB}
	 The semi-discrete scheme~\eqref{eq:semi_discrete} with Maxwell's fluxes \eqref{eq:multiD_flux_x} and \eqref{eq:multiD_flux_y} and second order reconstruction given in Section \ref{subsec:max_2nd_order}, satisfies,
	\begin{equation}
		\label{eq:exp_divB_error}
			\dfrac{d}{d t} \left(\na \cdot \Bb\right)_{\iph,\jph} = 0.
	\end{equation}
	for the magnetic field $\Bb$ which is consistent with \eqref{eq:divB_evo_cont}. 
\end{thm}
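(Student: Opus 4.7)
The plan is to establish~\eqref{eq:exp_divB_error} by direct algebraic substitution: I will plug the semi-discrete ODEs for $B_x$ and $B_y$ at the four cells surrounding the vertex $(x_\iph,y_\jph)$ into the discrete divergence formula and show that the resulting combination of the vertex values $\tilde E_{z,k+\frac{1}{2},l+\frac{1}{2}}$ telescopes to zero. The structural reason the argument works --- and on which the whole computation depends --- is that, by~\eqref{eq:multiD_flux_x}--\eqref{eq:multiD_flux_y}, the $y$-directional numerical flux for $B_x$ and the $x$-directional numerical flux for $B_y$ are face-averages of the \emph{same} vertex quantity $\tilde E_{z,\iph,\jph}$ that is shared across the four adjacent cells.

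First I extract the ODEs from~\eqref{eq:multiD_flux_x}--\eqref{eq:multiD_flux_y}, using that $\mathcal{S}_m$ carries no magnetic entry and that the first component of $\fb^x_m$ and the second of $\fb^y_m$ vanish identically. In a generic cell $(k,l)$ this gives
\begin{align*}
\frac{dB_{x,k,l}}{dt} &= -\frac{1}{2\Dy}\bigl(\tilde E_{z,k+\frac{1}{2},l+\frac{1}{2}} + \tilde E_{z,k-\frac{1}{2},l+\frac{1}{2}} - \tilde E_{z,k+\frac{1}{2},l-\frac{1}{2}} - \tilde E_{z,k-\frac{1}{2},l-\frac{1}{2}}\bigr),\\
\frac{dB_{y,k,l}}{dt} &= \phantom{-}\frac{1}{2\Dx}\bigl(\tilde E_{z,k+\frac{1}{2},l+\frac{1}{2}} + \tilde E_{z,k+\frac{1}{2},l-\frac{1}{2}} - \tilde E_{z,k-\frac{1}{2},l+\frac{1}{2}} - \tilde E_{z,k-\frac{1}{2},l-\frac{1}{2}}\bigr).
\end{align*}
I then substitute these into the discrete divergence at the vertex $(\iph,\jph)$, producing two bracketed sums, one from $(\partial_x B_x)$ and one from $(\partial_y B_y)$. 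Inside each bracket the ``interior'' vertex values along the column $i\pm\frac{1}{2}$ (respectively the row $j\pm\frac{1}{2}$) cancel in a telescoping cascade, and what remains in both contributions is the same four-corner combination
$$
\frac{1}{4\Dx\Dy}\bigl[\tilde E_{z,i+\frac{3}{2},j+\frac{3}{2}} - \tilde E_{z,i-\frac{1}{2},j+\frac{3}{2}} - \tilde E_{z,i+\frac{3}{2},j-\frac{1}{2}} + \tilde E_{z,i-\frac{1}{2},j-\frac{1}{2}}\bigr],
$$
with opposite signs carried by the $B_x$ and $B_y$ contributions; hence the two add to zero and~\eqref{eq:exp_divB_error} follows.

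The only ``hard'' step is the index bookkeeping --- keeping careful track of which vertex value appears in which cell's ODE with which sign --- since the cancellation is conceptually the discrete analogue of $\na\cdot(\na\times\Eb)=0$ on an elementary four-cell loop. A bonus of casting the proof this way is that it exposes the fact that the identity depends \emph{only} on the hypothesis that a single scalar value $\tilde E_{z,\iph,\jph}$ is used consistently in the face-averaged flux formulas on all four faces meeting at that vertex. Consequently, the argument is insensitive to the concrete recipe used to construct $\tilde E_{z,\iph,\jph}$, and the same telescoping works verbatim for the first-order multidimensional formula~\eqref{eq:multid_E} and for the MinMod-reconstructed second-order version~\eqref{eq:multid_Ez_2nd}, which is exactly what the theorem asserts.
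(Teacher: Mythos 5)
Your proposal is correct and follows essentially the same route as the paper's proof: extract the semi-discrete updates for $B_x$ and $B_y$ from the fluxes \eqref{eq:multiD_flux_x}--\eqref{eq:multiD_flux_y}, substitute them into the discrete divergence at the vertex, and verify that the shared vertex values $\tilde E_z$ cancel (your identification of the residual four-corner combination appearing with opposite signs in the $\partial_x B_x$ and $\partial_y B_y$ contributions checks out). Your closing observation that the cancellation depends only on a single scalar $\tilde E_{z,\iph,\jph}$ being used consistently on all four faces meeting at a vertex is exactly the remark the paper makes at the end of its proof, so the second-order reconstruction is covered for free in both arguments.
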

\begin{proof}

The evolution of the magnetic field components $B_x$ and $B_y$ can now be written as,
	$$
	\dfrac{d B_{x,i,j}}{d t} =	 
	-  \dfrac{1}{2\Delta y } 
	\bigg[
	\left(\tilde{E}_{z,i+\frac{1}{2},j+\frac{1}{2}} + \tilde{E}_{z,i-\frac{1}{2},j+\frac{1}{2}}\right)
	-\left(\tilde{E}_{z,i+\frac{1}{2},j-\frac{1}{2}} + \tilde{E}_{z,i-\frac{1}{2},j-\frac{1}{2}}\right)
	\bigg]
	$$
and
$$
\dfrac{dB_{y,i,j}}{d t} = \dfrac{1}{2\Delta x } 
	\bigg[
	\left(\tilde{E} _{z,i+\frac{1}{2},j+\frac{1}{2}} + \tilde{E} _{z,i+\frac{1}{2},j-\frac{1}{2}}\right)
	-\left(\tilde{E} _{z,i-\frac{1}{2},j+\frac{1}{2}} + \tilde{E} _{z,i-\frac{1}{2},j-\frac{1}{2}}\right)
	\bigg]
$$
then, the evolution of the discrete divergence is given by,
		\begin{align*}
			\frac{d}{dt}\left(\na \cdot \Bb\right)_{\iph,\jph}
			= &
			\dfrac{1}{2} 
			\Bigg(
			\frac{ \frac{d ({B}_{x,i+1,j+1})}{dt}-\frac{d ({B}_{x,i,j+1})}{dt}  }{\Delta x}
			+
			\frac{\frac{d ({B}_{x,i+1,j})}{dt} -\frac{d ({B}_{x,i,j})}{dt} }{\Delta x}
			\Bigg)
			\\ & \qquad +
			\dfrac{1}{2} 
			\Bigg(
			\frac{\frac{d ({B}_{y,i+1,j+1})}{dt}-\frac{d ({B}_{y,i+1,j})}{dt} }{\Delta y}
			+
			\frac{\frac{d ({B}_{y,i,j+1})}{dt} -\frac{d ({B}_{y,i,j})}{dt} }{\Delta y}
			\Bigg) \\
			 = &
			-\dfrac{1}{4\Delta x \Delta y} 
			\bigg[
			\left(\tilde{E}_{z,i+\frac{3}{2},j+\frac{3}{2}} + \tilde{E}_{z,i+\frac{1}{2},j+\frac{3}{2}}\right)
			-\left(\tilde{E}_{z,i+\frac{3}{2},j+\frac{1}{2}} + \tilde{E}_{z,i+\frac{1}{2},j+\frac{1}{2}}\right)
			\bigg]\\
			& 
			+\dfrac{1}{4\Delta x \Delta y}
			\bigg[
			\left(\tilde{E}_{z,i+\frac{1}{2},j+\frac{3}{2}} + \tilde{E}_{z,i-\frac{1}{2},j+\frac{3}{2}}\right)
			-\left(\tilde{E}_{z,i+\frac{1}{2},j+\frac{1}{2}} + \tilde{E}_{z,i-\frac{1}{2},j+\frac{1}{2}}\right)
			\bigg]\\
			&-\dfrac{1}{4\Delta x \Delta y} 
			\bigg[
			\left(\tilde{E}_{z,i+\frac{3}{2},j+\frac{1}{2}} + \tilde{E}_{z,i+\frac{1}{2},j+\frac{1}{2}}\right)
			-\left(\tilde{E}_{z,i+\frac{3}{2},j-\frac{1}{2}} + \tilde{E}_{z,i+\frac{1}{2},j-\frac{1}{2}}\right)
			\bigg]\\
			&+\dfrac{1}{4\Delta x \Delta y}
			\bigg[
			\left(\tilde{E}_{z,i+\frac{1}{2},j+\frac{1}{2}} + \tilde{E}_{z,i-\frac{1}{2},j+\frac{1}{2}}\right)
			-\left(\tilde{E}_{z,i+\frac{1}{2},j-\frac{1}{2}} + \tilde{E}_{z,i-\frac{1}{2},j-\frac{1}{2}}\right)
			\bigg]\\
			&+\dfrac{1}{4\Delta y\Delta x}
			\bigg[
			\left(\tilde{E}_{z,i+\frac{3}{2},j+\frac{3}{2}} + \tilde{E}_{z,i+\frac{3}{2},j+\frac{1}{2}}\right)
			-\left(\tilde{E}_{z,i+\frac{1}{2},j+\frac{3}{2}} + \tilde{E}_{z,i+\frac{1}{2},j+\frac{1}{2}}\right)
			\bigg]\\
			&-\dfrac{1}{4\Delta y\Delta x}
			\bigg[
			\left(\tilde{E}_{z,i+\frac{3}{2},j+\frac{1}{2}} + \tilde{E}_{z,i+\frac{3}{2},j-\frac{1}{2}}\right)
			-\left(\tilde{E}_{z,i+\frac{1}{2},j+\frac{1}{2}} + \tilde{E}_{z,i+\frac{1}{2},j-\frac{1}{2}}\right)
			\bigg]\\
			&+\dfrac{1}{4\Delta y\Delta x} 
			\bigg[
			\left(\tilde{E}_{z,i+\frac{1}{2},j+\frac{3}{2}} + \tilde{E}_{z,i+\frac{1}{2},j+\frac{1}{2}}\right)
			-\left(\tilde{E}_{z,i-\frac{1}{2},j+\frac{3}{2}} + \tilde{E}_{z,i-\frac{1}{2},j+\frac{1}{2}}\right)
			\bigg]\\
			&-\dfrac{1}{2\Delta y\Delta x} 
			\bigg[
			\left(\tilde{E}_{z,i+\frac{1}{2},j+\frac{1}{2}} + \tilde{E}_{z,i+\frac{1}{2},j-\frac{1}{2}}\right)
			-\left(\tilde{E}_{z,i-\frac{1}{2},j+\frac{1}{2}} + \tilde{E}_{z,i-\frac{1}{2},j-\frac{1}{2}}\right)
			\bigg]\\
   = & \ 0
				\end{align*}
		with the right hand side becoming zero after rearranging and cancelling all the terms. Hence the proposed schemes will ensure the discrete divergence free evolution of the magnetic field if the initial magnetic field is  discretely divergence free. Note that this cancellation was possible because of the use of the multi-dimensional Riemann solver used to define the fluxes.
\end{proof}
Similar to the magnetic field, we have the following result for the electric field.
\begin{thm}[Evolution of the divergence of electric field]
		\label{thm:divE}
	The semi-discrete scheme~\eqref{eq:semi_discrete} with Maxwell's fluxes \eqref{eq:multiD_flux_x} and \eqref{eq:multiD_flux_y} and second order reconstruction given in Section \ref{subsec:max_2nd_order}, satisfies,
	\begin{equation}
		\label{eq:exp_divE_error}
	\dfrac{d}{d t}\left(\na \cdot \Eb\right)_{\iph,\jph} =  - \left( \na \cdot \Jb\right)_{\iph,\jph},
	\end{equation}
	for the electric field $\Eb$ which is consistent with \eqref{eq:divE_evo_cont}. 
\end{thm}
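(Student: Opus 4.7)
The plan is to mirror the argument used in Theorem~\ref{thm:divB}, but now applied to the electric field. First, I would read off the semi-discrete evolution equations for the components $E_{x,i,j}$ and $E_{y,i,j}$ from the fourth and fifth entries of the multidimensional numerical fluxes~\eqref{eq:multiD_flux_x} and~\eqref{eq:multiD_flux_y}, together with Ampère's source $-\Jb$. Concretely, the update for $E_{x,i,j}$ depends only on $\tilde{B}_{z}$ at the four vertices adjacent to cell $(i,j)$ plus the local source $-J_{x,i,j}$, and symmetrically for $E_{y,i,j}$. The structure is identical to the $B$-evolutions used in the proof of Theorem~\ref{thm:divB} with the correspondence $\tilde{E}_z \longleftrightarrow \tilde{B}_z$ and an overall sign flip reflecting that Ampère's law carries $-\nabla\times\Bb$ while Faraday's law carries $+\nabla\times\Eb$.

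Next, I would substitute these four cell updates (for cells $(i,j)$, $(i+1,j)$, $(i,j+1)$, $(i+1,j+1)$) into the discrete vertex divergence at $(\iph,\jph)$, splitting the result into a ``flux part'' built out of $\tilde{B}_z$ values at the nine neighbouring vertices and a ``source part'' built out of the $J_x, J_y$ components at the four neighbouring cells. For the flux part, the sixteen $\tilde{B}_z$ contributions organize into exactly the same telescoping pattern that produced the zero on the right-hand side in the proof of Theorem~\ref{thm:divB}; the only change is a uniform sign, so the cancellation carries over verbatim and the flux part vanishes. This is the discrete counterpart of the vector identity $\nabla\cdot(\nabla\times\Bb)=0$, and it is enabled precisely by defining all four face fluxes from the \emph{same} vertex values $\tilde{B}_{z,\iph,\jph}$ of the multidimensional Riemann solver.

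What survives is the source contribution, which equals
\begin{equation*}
-\frac{1}{2\Delta x}\Big[(J_{x,i+1,j+1}-J_{x,i,j+1}) + (J_{x,i+1,j}-J_{x,i,j})\Big]
-\frac{1}{2\Delta y}\Big[(J_{y,i+1,j+1}-J_{y,i+1,j}) + (J_{y,i,j+1}-J_{y,i,j})\Big].
\end{equation*}
By the very definition of the discrete vertex divergence given at the start of Section~\ref{subsec:semi_div_cons} applied to the grid vector $\Jb$, this is exactly $-(\nabla\cdot\Jb)_{\iph,\jph}$, completing the proof and matching the continuous identity~\eqref{eq:divE_evo_cont}.

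The main obstacle is not conceptual but notational: one must carefully track the sign and index labels of all sixteen $\tilde{B}_z$ terms to confirm that the four groups telescope, without conflating the $x$-face averages (which pair two vertices sharing an $x$-coordinate) with the $y$-face averages (which pair vertices sharing a $y$-coordinate). Since Theorem~\ref{thm:divB} already carries out exactly this bookkeeping for $\tilde{E}_z$, the electric-field case reduces to quoting that cancellation under the stated symmetry; no new estimate or structural argument is required.
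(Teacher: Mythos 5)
Your proposal follows the paper's proof essentially verbatim: write the semi-discrete updates for $E_{x,i,j}$ and $E_{y,i,j}$ in terms of the vertex values $\tilde{B}_z$ plus the local current source, insert them into the discrete vertex divergence, invoke the same telescoping cancellation as in Theorem~\ref{thm:divB} for the flux part, and identify the surviving source terms with $-(\na\cdot\Jb)_{\iph,\jph}$ via the definition of the discrete divergence. This is correct and matches the paper's argument.
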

\begin{proof}
The evolution of the first two components of the electric field can be written as,

	\begin{subequations} \label{E_div_update}
		\begin{align}
			\dfrac{d}{d t}E_{x,i,j} &= - \dfrac{1}{2\Delta y } 
	\bigg[
	\left(\tilde{B}_{z,i+\frac{1}{2},j+\frac{1}{2}} + \tilde{B}_{z,i-\frac{1}{2},j+\frac{1}{2}}\right)
	-\left(\tilde{B}_{z,i+\frac{1}{2},j-\frac{1}{2}} + \tilde{B}_{z,i-\frac{1}{2},j-\frac{1}{2}}\right)
	\bigg] - ( J_x)_{i,j},  \\
			\dfrac{d}{d t}E_{y,i,j} &=  
			  \dfrac{1}{2\Delta x } 
	\bigg[
	\left(\tilde{B} _{z,i+\frac{1}{2},j+\frac{1}{2}} + \tilde{B} _{z,i+\frac{1}{2},j-\frac{1}{2}}\right)
	-\left(\tilde{B} _{z,i-\frac{1}{2},j+\frac{1}{2}} + \tilde{B} _{z,i-\frac{1}{2},j-\frac{1}{2}}\right)
	\bigg]- ( J_y)_{i,j}. 
		\end{align}
  \end{subequations}
Then,

	\begin{align*}
	\frac{d}{dt}\left(\na \cdot \Eb\right)_{\iph,\jph}
	& =
	\dfrac{1}{2} 
	\Bigg(
	\frac{ \frac{d ({E}_{x,i+1,j+1})}{dt}-\frac{d ({E}_{x,i,j+1})}{dt}  }{\Delta x}
	+
	\frac{\frac{d ({E}_{x,i+1,j})}{dt} -\frac{d ({E}_{x,i,j})}{dt} }{\Delta x}
	\Bigg)
	\\ & \qquad +
	\dfrac{1}{2} 
	\Bigg(
	\frac{\frac{d ({E}_{y,i+1,j+1})}{dt}-\frac{d ({E}_{y,i+1,j})}{dt} }{\Delta y}
	+
	\frac{\frac{d ({E}_{y,i,j+1})}{dt} -\frac{d ({E}_{y,i,j})}{dt} }{\Delta y}
	\Bigg)\\
	& =
	-\dfrac{1}{2} 
	\Bigg(
	\frac{ {J}_{x,i+1,j+1}-{J}_{x,i,j+1} }{\Delta x}
	+
	\frac{{J}_{x,i+1,j} -{J}_{x,i,j} }{\Delta x}
	\Bigg)
	\\ & \qquad -
	\dfrac{1}{2} 
	\Bigg(
	\frac{{J}_{y,i+1,j+1}- {J}_{y,i+1,j}}{\Delta y}
	+
	\frac{{J}_{y,i,j+1} -{J}_{y,i,j} }{\Delta y}
	\Bigg)	
\end{align*}
as the flux terms cancelled out, similar to the magnetic field case, leading to \eqref{eq:exp_divE_error}.
\end{proof}
\section{Fully discrete numerical schemes}
\label{sec:fully_dis}

The semi-discrete scheme \eqref{eq:semi_discrete} described in the previous section can be rewritten as, 
\begin{equation}
	\frac{d \Ub_{i,j}}{dt} = \mathcal{L}_{i,j}(\Ub(t)) + \mathcal{S}(\Ub_{i,j}(t))
\end{equation}
where
$$
\mathcal{L}_{i,j}(\Ub(t))=-\frac{1}{\Dx} \left(\Fb^x_{\iph,j} - \Fb^x_{\imh,j}\right) - \frac{1}{\Dy} \left(\Fb^y_{i,\jph} - \Fb^y_{i,\jmh}\right)  
$$
We now describe a second-order explicit and IMEX scheme and analyze the corresponding evolution of the divergence constraints.
\subsection{Explicit scheme}
\label{subsec:exp}
We consider the second-order strong stability preserving Runge-Kutta explicit method (see  \cite{Gottlieb2001}). We want to evolve the solution $\mathbf{U}^n$ at the $t^n$ and  to $\mathbf{U}^{n+1}$, the solution at time $t^{n+1} = t^n + \Delta t$. We proceed as follows:
\begin{enumerate}
	\item Set $\Ub^{(0)}=\Ub^n$.
	\item Compute the first step as follows:
	\begin{equation}
		\label{eq:exp_1st_step}
		\Ub^{(1)}_{i,j}=\Ub^{(0)}_{i,j} + \Dt \ \mathcal{L}_{i,j}\left(\Ub^{(0)}\right) +\Dt \ \mathcal{S}\left(\Ub_{i,j}^{(0)}\right).
	\end{equation}
	
	\item 
	Compute the second step as follows:
\begin{equation}
		\label{eq:exp_u2}
			\Ub^{(2)}_{i,j}=\Ub^{(1)}_{i,j}+\Dt \ \mathcal{L}_{i,j}\left(\Ub^{(1)}\right) +\Dt \ \mathcal{S}\left(\Ub_{i,j}^{(1)}\right).
\end{equation}	
	Now define the updated solution as,
		\begin{equation}
		\label{eq:exp_2nd_step}
	\Ub^{n+1}_{i,j}=\frac{1}{2}\left(\Ub^{(0)}_{i,j} + \Ub^{(2)}_{i,j}\right).
	\end{equation}
\end{enumerate}
The corresponding scheme is denoted by {\bf O2EXP-MultiD}. The evolution of the divergence constraint using the above scheme is given by the following result.
\begin{prop}[Evolution of the divergence constraint for the explicit scheme {\bf O2EXP-MultiD}]
	\label{prop:exp}
    The explicit scheme update \eqref{eq:exp_2nd_step} for the semi-discrete scheme  \eqref{eq:semi_discrete} satisfies,
	\begin{equation}
		\label{eq:exp_discrete_divB_error}
			\left(\na \cdot \Bb^{n+1}\right)_{\iph,\jph} = 	\left(\na \cdot \Bb^n\right)_{\iph,\jph},
	\end{equation}
	for the magnetic field $\Bb$ which is consistent with \eqref{eq:divB_evo_cont}. Similarly, for the electric field $\Eb$, the evolution of the discrete divergence of the electric field is given by, 
	\begin{equation}
		\label{eq:exp_discrete_divE_error}
		\left(\na \cdot \Eb^{n+1}\right)_{\iph,\jph} = \left(\na \cdot \Eb^n\right)_{\iph,\jph} -\frac{\Dt}{2} \left( \left(\na \cdot \Jb^{n}\right)_{\iph,\jph} + \left(\na \cdot \Jb^{(1)}\right)_{\iph,\jph}\right),
	\end{equation}
which is consistent with \eqref{eq:divE_evo_cont}.
\end{prop}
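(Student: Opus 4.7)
The plan is to reduce the claim to repeated application of the semi-discrete divergence identities of Theorems~\ref{thm:divB} and~\ref{thm:divE}, exploiting the fact that the discrete divergence operator $(\nabla\cdot\,)_{\iph,\jph}$ introduced in Section~\ref{subsec:semi_div_cons} is linear in the grid vector field. Because each stage of the SSP-RK2 update in~\eqref{eq:exp_1st_step}--\eqref{eq:exp_2nd_step} is an affine combination of forward-Euler steps, it suffices to understand how the discrete divergence behaves under a single forward-Euler step for $\Bb$ and for $\Eb$, and then to average appropriately.

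First I would write the Maxwell block of $\mathcal{L}_{i,j}$ and read off that, by construction of the fluxes~\eqref{eq:multiD_flux_x} and~\eqref{eq:multiD_flux_y}, the semi-discrete evolution of $\Bb$ decouples from the source (the magnetic components of $\mathcal{S}_m$ are zero), while for $\Eb$ the only source contribution is $-\Jb$. Then, applying $(\nabla\cdot\,)_{\iph,\jph}$ to the forward-Euler step for $\Bb$ and invoking Theorem~\ref{thm:divB} term-by-term gives
\begin{equation*}
\left(\na\cdot\Bb^{(k+1)}\right)_{\iph,\jph} = \left(\na\cdot\Bb^{(k)}\right)_{\iph,\jph}, \qquad k=0,1,
\end{equation*}
so $\na\cdot\Bb^{(1)} = \na\cdot\Bb^{(2)} = \na\cdot\Bb^n$ at the vertex, and the convex combination~\eqref{eq:exp_2nd_step} yields~\eqref{eq:exp_discrete_divB_error} immediately.

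For the electric field I would apply $(\nabla\cdot\,)_{\iph,\jph}$ to~\eqref{eq:exp_1st_step} and then to~\eqref{eq:exp_u2}. Using Theorem~\ref{thm:divE} (which states that the spatial flux contribution to $\tfrac{d}{dt}\na\cdot\Eb$ cancels identically and only the $-\na\cdot\Jb$ piece survives), each Euler substep produces
\begin{align*}
\left(\na\cdot\Eb^{(1)}\right)_{\iph,\jph} &= \left(\na\cdot\Eb^{n}\right)_{\iph,\jph} - \Dt\,\left(\na\cdot\Jb^{n}\right)_{\iph,\jph},\\
\left(\na\cdot\Eb^{(2)}\right)_{\iph,\jph} &= \left(\na\cdot\Eb^{(1)}\right)_{\iph,\jph} - \Dt\,\left(\na\cdot\Jb^{(1)}\right)_{\iph,\jph},
\end{align*}
where $\Jb^{(1)}$ denotes the current evaluated on the intermediate fluid state $\Ub^{(1)}$. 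Averaging according to~\eqref{eq:exp_2nd_step} and substituting produces exactly~\eqref{eq:exp_discrete_divE_error}.

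There is no hard step here: the work was already done in Theorems~\ref{thm:divB} and~\ref{thm:divE}, and the linearity of the discrete divergence plus the affine structure of SSP-RK2 do the rest. The only mild subtlety I would flag in the write-up is bookkeeping: one must verify that the discrete divergence operator commutes with the stage-wise evaluation of $\mathcal{L}$ and $\mathcal{S}$ (which is automatic because both are evaluated on a frozen state), and that the current $\Jb^{(1)}$ appearing in~\eqref{eq:exp_discrete_divE_error} is consistently the one built from the fluid primitives of the intermediate stage $\Ub^{(1)}$ via~\eqref{eq:current}. With those observations stated, the identity~\eqref{eq:exp_discrete_divE_error} is recognised as a trapezoidal-rule approximation of~\eqref{eq:divE_evo_cont}, confirming consistency.
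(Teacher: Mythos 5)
Your proposal is correct and follows essentially the same route as the paper's proof: both apply the semi-discrete identities of Theorems~\ref{thm:divB} and~\ref{thm:divE} to each forward-Euler stage, obtain $(\na\cdot\Bb^{(k+1)})_{\iph,\jph}=(\na\cdot\Bb^{(k)})_{\iph,\jph}$ and $(\na\cdot\Eb^{(k+1)})_{\iph,\jph}=(\na\cdot\Eb^{(k)})_{\iph,\jph}-\Dt\,(\na\cdot\Jb^{(k)})_{\iph,\jph}$, and then use the averaging step~\eqref{eq:exp_2nd_step} together with linearity of the discrete divergence. The remarks on the vanishing magnetic source components and on the meaning of $\Jb^{(1)}$ are consistent with the paper's (more terse) argument.
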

\begin{proof}
   Following the explicit scheme notation, define $\Bb^{(0)}=\Bb^n$. Then using Theorem \ref{thm:divB} with explicit time discretization, we get,
   $$
   \left(\na \cdot \Bb^{(1)}\right)_{\iph,\jph} = 	\left(\na \cdot \Bb^{(0)}\right)_{\iph,\jph},
   $$
   and
   $$
   \left(\na \cdot \Bb^{(2)}\right)_{\iph,\jph} = 	\left(\na \cdot \Bb^{(1)}\right)_{\iph,\jph}.
   $$
   Using these two equations, we get
   $$
   	\left(\na \cdot \Bb^{n+1}\right)_{\iph,\jph} = \frac{1}{2}\left( 	\left(\na \cdot \Bb^{(0)}\right)_{\iph,\jph} + \left(\na \cdot \Bb^{(2)}\right)_{\iph,\jph}  \right) = \left(\na \cdot \Bb^n\right)_{\iph,\jph}.
   $$
   Similarly for electric field, denote $\Eb^{(0)}=\Eb^n$; then using Theorem \ref{thm:divE}, we obtain
     $$
    \left(\na \cdot \Eb^{(1)}\right)_{\iph,\jph} = 	\left(\na \cdot \Eb^{(0)}\right)_{\iph,\jph} -\Dt   \left(\na \cdot \Jb^{(0)}\right)_{\iph,\jph}
    $$
    and
    $$
\left(\na \cdot \Eb^{(2)}\right)_{\iph,\jph} = 	\left(\na \cdot \Eb^{(1)}\right)_{\iph,\jph} -\Dt   \left(\na \cdot \Jb^{(1)}\right)_{\iph,\jph}
    $$
   Finally
   \begin{align*}
   	\left(\na \cdot \Eb^{n+1}\right)_{\iph,\jph} &= \frac{1}{2}\left( \left(\na \cdot \Eb^{(2)}\right)_{\iph,\jph} + \left(\na \cdot \Eb^{(0)}\right)_{\iph,\jph} \right)\\
   	&= \frac{1}{2}\left(  \left(\na \cdot \Eb^{(1)}\right)_{\iph,\jph} -\Dt   \left(\na \cdot \Jb^{(1)}\right)_{\iph,\jph} + \left(\na \cdot \Eb^{(0)}\right)_{\iph,\jph} \right)\\
   	&=  \frac{1}{2}\left(  \left(\na \cdot \Eb^{(0)}\right)_{\iph,\jph} -\Dt   \left(\na \cdot \Jb^{(0)}\right)_{\iph,\jph} -\Dt   \left(\na \cdot \Jb^{(1)}\right)_{\iph,\jph} + \left(\na \cdot \Eb^{(0)}\right)_{\iph,\jph} \right) \\
   	&= \left(\na \cdot \Eb^{n}\right)_{\iph,\jph} -\frac{\Dt}{2} \left( \left(\na \cdot \Jb^{n}\right)_{\iph,\jph} + \left(\na \cdot \Jb^{(1)}\right)_{\iph,\jph}\right).
   \end{align*}
\end{proof}

\subsection{IMEX scheme}
\label{subsec:imex}
The source terms contain charge-to-mass ratios which can be very large, leading to stiffness. To overcome the time-stepping restriction imposed by this stiffness, we consider IMEX schemes, where we treat the source terms implicitly~\cite{Bhoriya2023}. We will use a L-stable second-order accurate IMEX scheme from~\cite{Pareschi2005}. It is based on the evaluation of the two implicit stages. The scheme is given as follows:
\begin{eqnarray}
			\label{eq:imex_1st_step}
	\Ub_{i,j}^{(1)}&=& \Ub_{i,j}^n + \Dt \left(\beta \mathcal{S} \left(\Ub_{i,j}^{(1)} \right) \right) \\
			\label{eq:imex_2nd_step}	
	\Ub_{i,j}^{(2)} &=& \Ub_{i,j}^n + \Dt \left[ \mathcal{L}_{i,j} \left(\Ub^{(1)}\right) + (1-2\beta) \mathcal{S} \left(\Ub_{i,j}^{(1)} \right) + \beta \mathcal{S} \left(\Ub_{i,j}^{(2)} \right) \right]\\
			\label{eq:imex_3rd_step}
\Ub^{n+1}_{i,j} &=&\Ub_{i,j}^n + \frac{1}{2}\Dt\left[ \mathcal{L}_{i,j} \left(\Ub^{(1)}\right) + \mathcal{L}_{i,j} \left(\Ub^{(2)}\right)  + \mathcal{S} \left(\Ub_{i,j}^{(1)}\right) +\mathcal{S} \left(\Ub_{i,j}^{(2)} \right) \right]
\end{eqnarray} 
Here, $\beta = 1 - \frac{1}{\sqrt{2}}$. To solve the nonlinear implicit equations in \eqref{eq:imex_1st_step} and \eqref{eq:imex_2nd_step}, we use Newton's method based on the backtracing line search~\cite{Dennis1996}. The corresponding scheme is denoted by {\bf O2IMEX-MultiD}. 

For the IMEX scheme  {\bf O2IMEX-MultiD}, we have the following result for the evolution of divergence constraints.

\begin{prop}[Evolution of the divergence constraint for the explicit scheme {\bf O2IMEX-MultiD}]
		\label{prop:imex}
	 The IMEX scheme update~\eqref{eq:imex_1st_step}-\eqref{eq:imex_3rd_step} for the semi-discrete scheme  \eqref{eq:semi_discrete} satisfies,
	\begin{equation}
	\label{eq:imex_divB_error}
	(\na \cdot \Bb^{n+1})_{\iph,\jph} = (\na \cdot \Bb^n)_{\iph,\jph},
\end{equation}
	for the magnetic field $\Bb$ which is consistent with \eqref{eq:divB_evo_cont}. Similarly, for the electric field $\Eb$, the evolution of the discrete divergence is given by, 
	\begin{equation}
		\label{eq:imex_divE_error}
		\left(\na \cdot \Eb^{n+1}\right)_{\iph,\jph} = \left(	\na \cdot \Eb^n\right)_{\iph,\jph} -\frac{\Dt}{2} \left( \left(\na \cdot \Jb^{(1)}\right)_{\iph,\jph} + \left(\na \cdot \Jb^{(2)}\right)_{\iph,\jph}\right)
	\end{equation}
	which is consistent with \eqref{eq:divE_evo_cont}.
\end{prop}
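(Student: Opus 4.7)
The plan is to leverage the two semi-discrete results Theorem \ref{thm:divB} and Theorem \ref{thm:divE} stage by stage, exploiting the fact that the only coupling between the magnetic/electric field equations and the stiff source is through the current $\Jb$ in Ampère's law; the induction equation has no source term.

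First I would apply the IMEX update componentwise to the magnetic field block. Since $\mathcal{S}_m$ has zero entries for the $\Bb$ components, the implicit stage \eqref{eq:imex_1st_step} gives $\Bb^{(1)}=\Bb^n$, so in particular $(\nabla\cdot\Bb^{(1)})_{\iph,\jph}=(\nabla\cdot\Bb^n)_{\iph,\jph}$. The stage \eqref{eq:imex_2nd_step} then reduces for $\Bb$ to $\Bb^{(2)}=\Bb^n+\Dt\,\mathcal{L}_{i,j}^{B}(\Ub^{(1)})$, and taking the vertex divergence of this identity and invoking Theorem \ref{thm:divB} (which asserts precisely that the discrete divergence of $\mathcal{L}^{B}$ vanishes at each vertex for our multidimensional Maxwell flux) yields $(\nabla\cdot\Bb^{(2)})_{\iph,\jph}=(\nabla\cdot\Bb^n)_{\iph,\jph}$. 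Taking the discrete divergence of the final update and using the same cancellation for $\mathcal{L}^{B}(\Ub^{(1)})$ and $\mathcal{L}^{B}(\Ub^{(2)})$ separately then gives \eqref{eq:imex_divB_error}.

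For the electric field I would carry out the analogous bookkeeping, but now keeping the $-\Jb$ contribution from $\mathcal{S}_m$. Taking the discrete divergence of \eqref{eq:imex_2nd_step} restricted to the $\Eb$ block, Theorem \ref{thm:divE} tells us that the flux term $\mathcal{L}^{E}$ contributes nothing to $(\nabla\cdot\Eb)_{\iph,\jph}$ (the flux cancellations argued in its proof are purely algebraic and independent of which stage data is inserted), so the only surviving terms are
\[
(\nabla\cdot\Eb^{(2)})_{\iph,\jph}=(\nabla\cdot\Eb^n)_{\iph,\jph}-\Dt(1-2\beta)(\nabla\cdot\Jb^{(1)})_{\iph,\jph}-\Dt\beta(\nabla\cdot\Jb^{(2)})_{\iph,\jph}.
\]
I would not actually need this intermediate identity, however, because the final update \eqref{eq:imex_3rd_step} writes $\Eb^{n+1}-\Eb^n$ directly as a linear combination of $\mathcal{L}^{E}$ and $\mathcal{S}^{E}$ terms evaluated at stages $(1)$ and $(2)$. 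Applying the vertex divergence, using Theorem \ref{thm:divE} to kill the $\mathcal{L}^{E}$ contributions and replacing each $\mathcal{S}^{E}(\Ub^{(k)})$ by $-\Jb^{(k)}$ produces \eqref{eq:imex_divE_error} immediately.

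The only nontrivial step is recognizing that the flux-divergence cancellation of Theorem \ref{thm:divE} is a purely stencil-level identity, so it applies at every intermediate IMEX stage regardless of whether the underlying data is $\Ub^n$, $\Ub^{(1)}$, or $\Ub^{(2)}$; once this is noted, the rest is linear bookkeeping and no algebraic difficulty remains. Consistency with \eqref{eq:divE_evo_cont} follows because $\tfrac{1}{2}\bigl((\nabla\cdot\Jb^{(1)})+(\nabla\cdot\Jb^{(2)})\bigr)$ is a second-order quadrature of $\nabla\cdot\Jb$ over $[t^n,t^{n+1}]$, consistent with the continuous identity $\partial_t(\nabla\cdot\Eb)=-\nabla\cdot\Jb$.
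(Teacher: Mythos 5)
Your proof is correct and follows essentially the same route as the paper, which simply states that the argument is ``similar to the explicit case,'' i.e., a stage-by-stage application of Theorems~\ref{thm:divB} and~\ref{thm:divE} to the IMEX stages --- exactly what you carry out, in more detail than the paper itself provides. Your key observation, that the flux-divergence cancellation is a purely stencil-level identity valid for whatever stage data is inserted (and that the implicit source stage leaves $\Bb$ untouched while contributing only $-\Jb$ to the $\Eb$ block), is precisely the justification needed to reuse the semi-discrete results at each stage.
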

\begin{proof}
	The proof is similar to the case of {\bf O2EXP-MultiD} explicit scheme.
\end{proof}

\section{Numerical results} \label{sec:num_test}
We will now present the numerical results for the proposed schemes for the various test cases. In few test cases we need additional resistive effect ~\cite{Amano2016,Bhoriya2023}, which are incorporated by modifying Eqns.\eqref{momentum} and \eqref{energy} to,

\begin{subequations}
	\begin{align}
	\frac{\partial \mathbf{m}_\alpha}{\partial t} + \nabla \cdot (\mathbf{m}_\alpha \mathbf{u}_\al  + p_\alpha \mathbf{I}) & = r_\alpha \Gamma_\alpha \rho_\alpha (\mathbf{E}+\mathbf{u}_\al \times \mathbf{B}) + \boldsymbol{R}_\alpha  , \label{resist_momentum}	\\ 
		%
	\frac{\partial \mathnormal{E}_\al}{\partial t} + \nabla \cdot ((\mathnormal{E}_\alpha+p_\al)\mathbf{u}_\al) & = r_\al \Gamma_\al \rho_\al (\mathbf{u}_\al \cdot \mathbf{E}) + R_\alpha^0,	\label{resist_energy} 
	\end{align}
\label{eq:resistive}
\end{subequations}
for $\alpha \in \{i,e\}$. Following\cite{Amano2016}, the anti-symmetry relationship, $(\boldsymbol{R}_e, {R}_e^0)=(-\boldsymbol{R}_i, -R_i^0)$, needs to be imposed to ensure conservation of momentum and energy. The terms $\boldsymbol{R}_i$ and $R_i^0$ are then given by 
\begin{align*}
	\boldsymbol{R}_i =  -\eta \dfrac{\omega_p^2}{r_i-r_e} (\mathbf{J}- \rho_0 \boldsymbol{\Phi}), \qquad
	{R}_i^0 =  -\eta \dfrac{\omega_p^2}{r_i-r_e} (\rho_c- \rho_0 {\Lambda}),
\end{align*}
with,
\begin{eqnarray} \label{source_var}
	\omega_p^2 = r_i^2 \rho_i + r_e^2 \rho_e, \qquad
	\boldsymbol{\Phi} = \dfrac{r_i \rho_i \Gamma_i \mathbf {u}_i + r_e \rho_e \Gamma_e \mathbf{u}_e}{\omega_p^2}, \qquad
	\Lambda = \dfrac{r_i^2 \rho_i \Gamma_i  + r_e^2 \rho_e \Gamma_e }{\omega_p^2}, \qquad
	\rho_0 = \Lambda \rho_c - \mathbf{J} \cdot \boldsymbol{\Phi}. \nonumber
\end{eqnarray}
Here,  $\eta$ is the resistivity constant, and $\omega_p$ is the total plasma frequency. Also, the total plasma skin depth $d_p$ is defined as $d_p = \frac{1}{\omega_p}$. For the time update using IMEX scheme \mdi, we treat the resistive terms implicitly and couple them with source $\mathcal{S}$.

To ensure consistent comparison with the numerical results in \cite{Balsara2016}, for the test cases presented here, we multiply the source term of the Maxwell's equations $\mathcal{S}_m$ with a factor of $4\pi$. Also, for the test cases from \cite{Balsara2016}, the total plasma skin depth is given by $d_p=\frac{\sqrt{2}}{\omega_p}$ with $\omega_p^2=4\pi(r_i^2 \rho_i + r_e^2 \rho_e).$  

\subsection{One-dimensional test cases}
\rev{The 1-D equations are obtained from the 2-D equations by removing the derivatives with respect to $y$. In the numerical scheme and code, we solve for all the quantities including $B_x$.} In the case of one-dimensional tests, the time step $\Delta t$ is given by, 	
\[
{\Delta t} = \text{CFL} \cdot \min \left\{ \dfrac{\Delta x}{\Lambda_{max}^x(\mathbf{U}_i)} : 1 \le i \le N_x\right\}, \textrm{ where } \Lambda_{max}^x(\mathbf{U}_i) = \max\{|\Lambda^x_k(\mathbf{U}_i)|: 1 \le k \le 16\}.
\]
We take the CFL number of 0.8 for both  \mde \ and \mdi~schemes unless stated otherwise. In one dimension, any consistent scheme automatically satisfies the divergence constraints, \rev{which reduces to $B_x = $ constant}. Therefore, there is no need to address divergence errors for one-dimensional test cases. Hence, we will limit our discussion to the accuracy and wave-capturing abilities of the proposed schemes. The performance of the schemes related to the divergence constraints will be discussed for the two-dimensional test cases. 

\subsubsection{Accuracy test}
\label{test:1d_smooth}
To demonstrate the accuracy of the schemes, we consider the test case from~\cite{Bhoriya2023,bhoriya_entropydg_2023}, which is motivated by the modified equations approach of~\cite{kumar2012entropy, Abgrall2014}. We consider,
\begin{equation*}
	\frac{\partial \mathbf{U}}{\partial t}+\frac{\partial \mathbf{f}^x}{\partial x} = \mathcal{S} + \mathbf{\mathcal{R}}(x,t)
\end{equation*}
where,
\begin{align*}
	\mathbf{\mathcal{R}}(x,t)=\Big(\mathbf{0}_{13},
	-\dfrac{1}{\sqrt{3}}(2+\sin(2 \pi (x-0.5t))),
	0,
	-3\pi\cos(2 \pi (x-0.5t))\Big)^\top.
\end{align*}
 The initial densities are taken to be $\rho_i=\rho_e=2+\sin(2 \pi x)$ with velocities $u_i^x=u_{e}^x = 0.5$ and constant pressures, $p_i=p_e=1$.  Also,  initially, we take $B_y=2 \sin(2 \pi (x))$ and $E_z = -\sin(2\pi (x)$. All other variables are taken to zeros. The computational domain is $[0,1]$ with periodic boundary conditions. The ion-electron mass ratio is taken to be $2.0$ and heat constants are taken to be $\gamma_i=\gamma_e=5/3.$ The exact solution of the problem is given by $\rho_i = \rho_e = 2+\sin(2 \pi (x-0.5 t))$.

\begin{table}[ht]
	\centering
	\begin{tabular}{|c|c|c|c|c|}
		\hline
		Number of cells & \multicolumn{2}{|c}{ \textbf{O2EXP-MultiD} } &
		\multicolumn{2}{|c|}{{\textbf{O2IMEX-MultiD}}} \\
		\hline
		-- & $L^1$ error & Order & $L^1$ error & Order \\
		\hline
		32 & 1.29481e-01 & -- & 1.29676e-01 & -- \\
		64 & 4.83174e-02 & 1.4221 & 4.83361e-02 & 1.4237 \\
		128 & 1.52820e-02 & 1.6607 & 1.52902e-02 & 1.6605 \\
		256 & 4.25288e-03 & 1.8453 & 4.25412e-03 & 1.8457 \\
		512 & 1.16120e-03 & 1.8728 & 1.16151e-03 & 1.8729 \\
		1024 & 3.13057e-04 & 1.8911 & 3.13114e-04 & 1.8912 \\
		2048 & 8.27499e-05 & 1.9196 & 8.27657e-05 & 1.9196 \\
		\hline
	\end{tabular}
	\caption[h]{\nameref{test:1d_smooth}: $L^1$ errors and order of convergence for $\rho_i$ using the explicit scheme \textbf{O2EXP-MultiD} and implicit scheme  \textbf{O2IMEX-MultiD}.}
	\label{table:order}
\end{table}	
Table~\eqref{table:order} consists of $L^1$ errors for the variable $\rho_i$, at computational time $t=2.0$ for \mde~and \mdi \ schemes. We note that for~\mdi \ scheme, $\mathcal{R}$ is evaluated to be consistent with the source $\mathcal{S}$ time discretization. We note that both the schemes achieve theoretical second-order accuracy and have comparable errors at all the resolutions.

\subsubsection{Relativistic Brio-Wu test problem with finite plasma skin depth} 
\label{test:1d_brio} 

In this test case, we consider a shock tube problem from \cite{Amano2016, Balsara2016, Bhoriya2023,bhoriya_entropydg_2023}, which is motivated by the Brio-Wu shock tube problem for classical MHD \cite{brio1988}. A non-relativistic two-fluid version is considered in \cite{Hakim2006,loverich2011,kumar2012entropy, Abgrall2014}. The RMHD version of the problem was considered in~\cite{Balsara2001}.  Here, we scale the source term of Maxwell's equation with a factor of $4\pi$, to have a consistent comparison with the results in \cite{Balsara2016}. No resistive effects are considered. The domain of the problem is $[-0.5,0.5]$ with Neumann boundary conditions. The initial solution has two constant states, given by,
\begin{align*}
	\mathbf{W_L}=\begin{cases}
		\rho_i =0.5\\ p_i=0.5 \\ \rho_e=0.5 \\ p_e=0.5 \\ B_x=\sqrt{\pi} \\ B_y = \sqrt{4 \pi}
		\end{cases},
	\qquad
	\mathbf{W_R}=\begin{cases}
		\rho_i=0.0625 \\ p_i=0.05 \\ \rho_e=0.0625 \\ p_e=0.05 \\ B_x=\sqrt{\pi} \\ B_y-\sqrt{4 \pi}
	\end{cases},
\end{align*}
separated at $x=0.0$. All other variables have zero values.  We consider the specific heat ratios to be $\gamma_i=\gamma_e=2.0$. The simulations are performed using  $r_i = -r_e$ for different values of $r_i=\frac{10^3}{\sqrt{4\pi}}, \frac{10^4}{\sqrt{4\pi}}, \frac{10^5}{\sqrt{4\pi}}$ with the corresponding plasma skin depths of $\frac{10^{-3}}{\sqrt{\rho_i}}$, $\frac{10^{-4}}{\sqrt{\rho_i}}$. and $\frac{10^{-5}}{\sqrt{\rho_i}}$, respectively. When cell resolution $\Delta x$ is bigger than the plasma skin depth, we expect the solution to be close to the ``RMHD" solution. 

The simulations are performed till time $t=0.4$ using $400$, $1600$, and $6400$ cells for the schemes \mde~and \mdi. The ``RMHD" solution used for comparison, is computed using \mdi~with $6400$ cells and $r_i=\frac{10^5}{\sqrt{4\pi}}$.

\begin{figure}[!htbp]
	\begin{center}
		\subfigure[Plots of total density $\rho_i+\rho_e$ using $400$, $1600$ and $6400$ cells.]{
			\includegraphics[width=3.0in, height=2.3in]{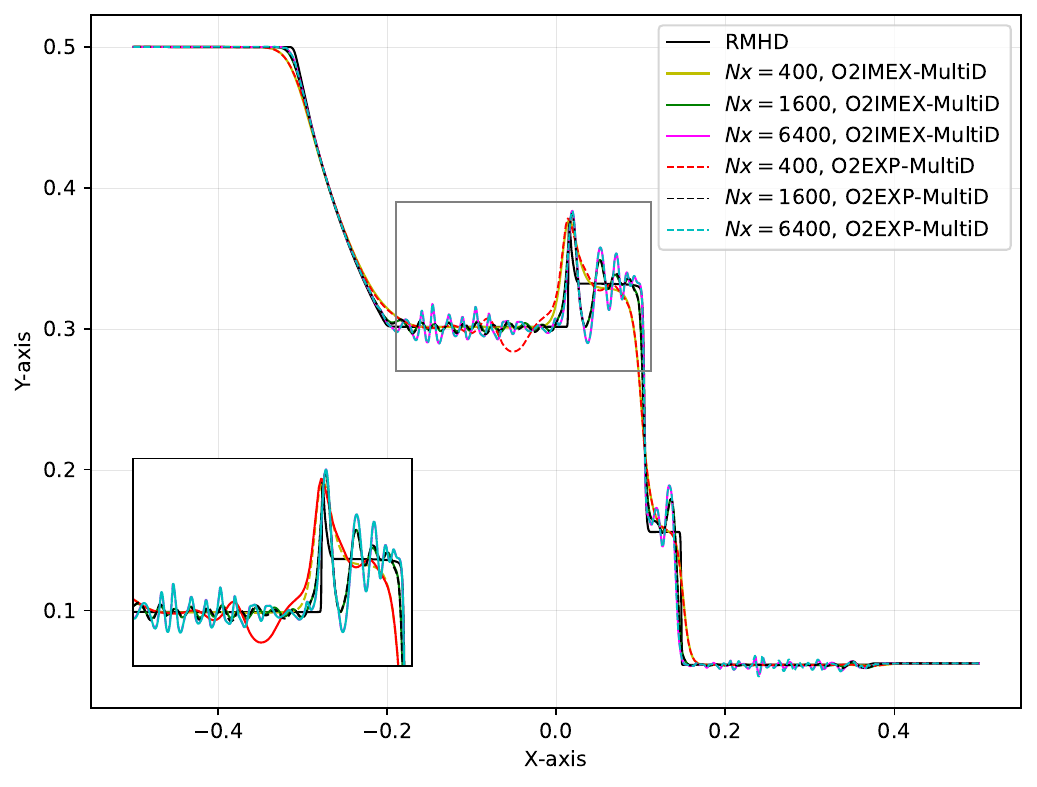}
			\label{fig:brio_rho_o2}}
		\subfigure[Plots of total density $\rho_i+\rho_e$ using $400$ cells.]{
			\includegraphics[width=3.0in, height=2.3in]{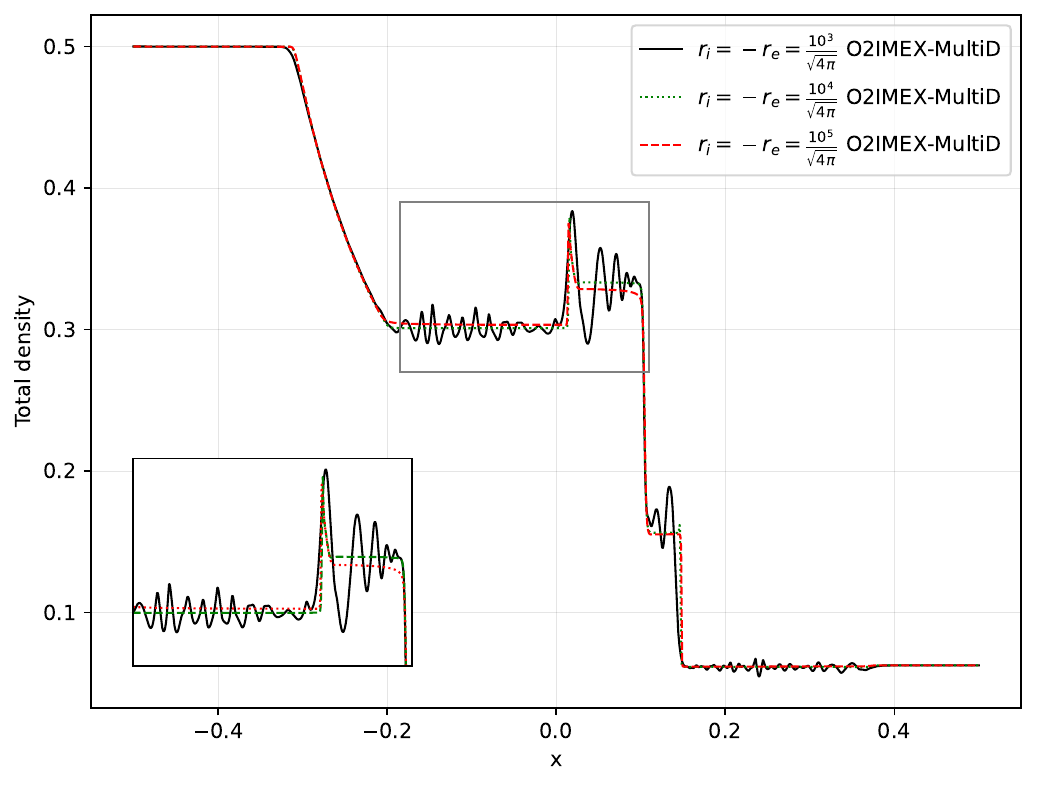}
			\label{fig:bw_6400}}
		\caption{Numerical results for \nameref{test:1d_brio} using different schemes and grid resolutions.}
		\label{fig:brio}
	\end{center}
\end{figure}

\begin{table}[ht]
	\centering
	\begin{tabular}{|c|c|c|c| }
		\hline
		& \multicolumn{3}{c|}{$r_i = -r_e = 10^4/ \sqrt{4 \pi}$} \\
		\hline
		Number of cells & $256$ & $512$ & $1024$  \\
		\hline
		{\bf O2IMEX-MultiD} & 2.85s (0.8) & 9.39s (0.8) & 38.93s (0.8) 
        \\
		\hline
		{\bf O2EXP-MultiD}  & 11.07s (0.02) & 20.34s (0.04) & 39.99s (0.08) 
        \\
		\hline
        & \multicolumn{3}{c|}{$r_i = -r_e = 10^5/ \sqrt{4 \pi}$} \\
		\hline
		Number of cells & $256$ & $512$ & $1024$  \\
		\hline
		{\bf O2IMEX-MultiD} & 2.81s (0.8)& 9.34s (0.8)& 34.35s (0.8)
        \\
		\hline
		{\bf O2EXP-MultiD}  & 201.68s (0.0011)& 331.81s (0.0025)& 1172.85s (0.0049)
        \\
		\hline
	\end{tabular}
	\caption[h]{\nameref{test:1d_brio}: Computational times (and CFL) used for  $r_i = -r_e = 10^4/ \sqrt{4 \pi}$ and $r_i = -r_e = 10^5/ \sqrt{4 \pi}$ using the second order({\bf O2EXP-MultiD} and {\bf O2IMEX-MultiD}) schemes.}
	\label{table:time_st1}
\end{table}
In Figure~\ref{fig:brio_rho_o2}, we consider the case with  $r_i=\frac{10^3}{\sqrt{4\pi}}.$  We have plotted the total density using~\mde~and~\mdi~schemes. When 400 cells are used, the resolution is bigger than the plasma skin depth, hence the solution is close to the ``RMHD" solution. When the resolution is increased to $1600$ and $6400$ cells, both schemes can resolve small-scale dispersive oscillations. Particularly when $6400$ cells are used, both schemes are able to resolve small-scale dispersive waves and also capture the discontinuities. 

To study the effects of different values of $r_i$, we have plotted the total density for $r_i=\frac{10^3}{\sqrt{4\pi}},\frac{10^4}{\sqrt{4\pi}},\frac{10^5}{\sqrt{4\pi}}$ in Figure~\ref{fig:bw_6400}. We note that for $r_i=\frac{10^3}{\sqrt{4\pi}}$, the cell size is smaller than the plasma skin depth. As a result, we are able to resolve all the dispersive waves. Nevertheless, the plasma skin depth is smaller for various values of $r_i$.  Hence, we are not able to resolve the additional waves, and the solution is close to the ``RMHD" solution.

\rev{To demonstrate the computational efficiency of IMEX schemes, we have presented the computational time and CFL for~\mde~and~\mdi~schemes with $r_i=\frac{10^4}{\sqrt{4\pi}}$ and $r_i=\frac{10^5}{\sqrt{4\pi}}$ for various resolutions. As the source term is stiff in these cases, the time step for the ~\mde~  is governed by the source term. Hence, we decrease the CFL for ~\mde~ till we get stable numerical results. This is unnecessary for the ~\mdi~ scheme, as the source is treated implicitly. Hence, the time step is given by the CFL of $0.8$ in all the cases. However, the IMEX update needs to solve nonlinear equations, which is more expansive than the explicit treatment.

For the case of $r_i=\frac{10^4}{\sqrt{4\pi}}$, at the lower resolutions, \mdi~scheme is much more computationally efficient as the ~\mde~ scheme needs a very small CFL number to make the source term stable. At higher resolution of $1024$ cells, the time step is still governed by the source term. Hence, ~\mde~ needs to take CFL of $0.08$, but the computational times are still comparable with ~\mdi~ with CFL of $0.8$ because of the computational cost for solving nonlinear equations.

For the case of $r_i=\frac{10^5}{\sqrt{4\pi}}$, even at the higher resolution of $1024$, the~\mdi~ scheme is more computationally effective as the CFL needed for a stable run of ~\mde~ is too small to overcome the computational cost of solving nonlinear equations in ~\mdi. Of course, ~\mdi~ is much more cost-effective on the coarser mesh.

 We further note a negligible difference in computational time for~\mdi~scheme for both values of $r_i$; however, for~\mde~scheme computational time increases significantly with $r_i$.}

\subsubsection{Self-similar current sheet with finite resistivity} 
\label{test:1d_sheet} 
To demonstrate the effects of resistive terms, we take into consideration this test from~\cite{Amano2016}, which is inspired by the RMHD test case, presented in~\cite{Komissarov2007}. We consider the modified Eqns.~\eqref{resist_momentum} and~\eqref{resist_energy} to account for additional resistive effects. Only the $B_y$ component has non-zero variation, and its time evolution is governed by,

\begin{equation} \label{current_diff}
	\dfrac{\partial B_y}{\partial t} - D \dfrac{\partial^2 B_y}{\partial x^2} =0,
\end{equation}
with $D=\eta c^2$. The exact solution (see \cite{Komissarov2007}) is then given by,
\begin{equation}
	\label{eq:Byrmhd}
	B_y(x,t) = B_0 \text{  erf}\left(\dfrac{x}{2 \sqrt{Dt}} \right),
\end{equation}
where ``erf" is the error function. For our model, we consider the initial $B_y$ at the time $t=1.0$, with  $B_0=1.0$ and $\eta c^2 =0.01$. We consider the computational domain of $[-1.5,1.5]$ with Neumann boundary conditions. The charge-to-mass ratios are $r_i = -r_e=10^3$. We also consider the ion and electron density as 0.5 and the ion and electron pressures as 25.0. The $z$-component of the ion and electron velocity is given by
\[
u^z_i=-u^z_e=\dfrac{B_0}{r_i \rho_i \sqrt{\pi D}} \exp\left(-\dfrac{x^2}{4D}\right).
\]

All other variables are assumed to be vanishing. We simulate till time $t=9.0$ using \mde~and \mdi~schemes with $400$ cells. The results are presented in Figure~\ref{fig:sheet}. We note that the computed solution for both schemes matches with the analytical solution derived from resistive RMHD. Furthermore, both schemes have similar accuracy.

\begin{figure}[!htbp]
	\begin{center}
		\subfigure[]{
			\includegraphics[width=3.0in, height=2.6in]{current_sheet.pdf}}
			\caption{\nameref{test:1d_sheet}: Comparison of the $B_y$ profile for explicit \mde~ and IMEX scheme \mdi~using $400$ cells.}
		\label{fig:sheet}
	\end{center}
\end{figure}

\subsection{Two-dimensional test cases}
In this section, we will present two-dimensional test cases. The focus will be on the performance of the schemes in preserving the divergence constraints. We will compare the divergence constraints preservation performance of the proposed scheme with no divergence treatment and PHM-based divergence treatment. In no divergence treatment, we ignore the divergence constraints (Eqns. \eqref{eq:div_B} and \eqref{eq:gauss_E}) and just discretize Eqns. \eqref{eq:max_B} and \eqref{eq:max_E} using one-dimensional Rusanov solver and one-dimensional MinMod reconstructions. The corresponding explicit and IMEX schemes are denoted by \ote~and \oti, respectively. For PHM-based schemes, we consider \eqref{eq:phm_eqn} instead of \eqref{eq:maxwell_eqn}. They are then discretized using a standard one-dimensional Rusanov solver and one-dimensional MinMod reconstruction (see \cite{Bhoriya2023} for complete details). The corresponding explicit and IMEX schemes are denoted by \phme~and \phmi, respectively. The time step for two-dimensional test cases is computed using,
 \[
{\Delta t} = \text{CFL}  \min_{i,j} \left\{ \dfrac{1}{ \dfrac{\Lambda_{max}^x(\mathbf{U}_{i,j})} {\Delta x} + \dfrac{\Lambda_{max}^y(\mathbf{U}_{i,j})}{\Delta y}}: 1 \le i \le N_x, \ 1 \le j \le N_y \right\}
\]

	where $\Lambda_{max}^x(\mathbf{U}_{i,j}) =  \max \{|\Lambda^x_k(\mathbf{U}_{i,j})| : 1 \le k \le 16 \}$ and $\Lambda_{max}^y(\mathbf{U}_{i,j}) = \max\{|\Lambda^y_k(\mathbf{U}_{i,j})|: 1 \le k \le 16\}$. We take CFL to be $0.45$ for the \mdi~and $0.2$ for \mde.

\par To compare the divergence constraints preserving the performance of the schemes, following the Theorem we compute the $L^1$ and $L^2$ norms of the discrete divergence of the magnetic field as follows:
	\begin{itemize}
	\item \textbf{$L^1$ error of magnetic field divergence constraint}: \qquad
	$$
	\| \nabla \cdot \Bb\|_1= 	\frac{1}{N_x N_y}\sum_{i=1}^{N_x} \sum_{j=1}^{N_y} |  (\nabla\cdot\mathbf{B})_{i+\frac{1}{2},j+\frac{1}{2}}|
	$$
	\item \textbf{$L^2$ error of magnetic field divergence constraint}: \qquad
	$$\| \nabla \cdot \Bb\|_2=\left[
	\frac{1}{N_x N_y}\sum_{i=1}^{N_x} \sum_{j=1}^{N_y} |  (\nabla\cdot\mathbf{B})_{i+\frac{1}{2},j+\frac{1}{2}}  |^2
	\right]^{1/2} $$
\end{itemize}
Similarly, following Proposition \ref{prop:exp} and \ref{prop:imex}, we define explicit electric field divergence constraint error as,
\begin{align*}
	R_E^{n} = \left((\nabla\cdot\mathbf{E}^{n+1})_{i+\frac{1}{2},j+\frac{1}{2}}-\left(
	(\nabla\cdot\mathbf{E}^{n})_{i+\frac{1}{2},j+\frac{1}{2}}
	- 
	\dfrac{\Delta t}{2}
	\left[
	(\nabla\cdot\mathbf{J}^{(1)})_{i+\frac{1}{2},j+\frac{1}{2}}
	+
	(\nabla\cdot\mathbf{J}^{n})_{i+\frac{1}{2},j+\frac{1}{2}}
	\right]
	\right)\right)
\end{align*}
and IMEX electric field divergence constraint error as,
\begin{align*}
	R_I^{n} = \left((\nabla\cdot\mathbf{E}^{n+1})_{i+\frac{1}{2},j+\frac{1}{2}}-\left(
	(\nabla\cdot\mathbf{E}^{n})_{i+\frac{1}{2},j+\frac{1}{2}}
	- 
	\dfrac{\Delta t}{2}
	\left[
	(\nabla\cdot\mathbf{J}^{(1)})_{i+\frac{1}{2},j+\frac{1}{2}}
	+
	(\nabla\cdot\mathbf{J}^{(2)})_{i+\frac{1}{2},j+\frac{1}{2}}
	\right]
	\right)\right)
\end{align*}
Using these two expressions the corresponding $L^1$ and $L^2$ errors are defined as,
\begin{itemize}
	\item \textbf{$L^1$ error of electric field divergence constraint for explicit scheme}: \qquad
	$$
	\|\na\cdot \Eb\|_{1}^{E}	 =  \frac{1}{N_x N_y}\sum_{i=1}^{N_x} \sum_{j=1}^{N_y} 
	\left|
	R_E^n
	\right|
	$$
	\item \textbf{$L^2$ error of electric field divergence constraint for explicit scheme}: \qquad
	$$\|\na\cdot\Eb\|_{2}^{E}=\left[
	\frac{1}{N_x N_y}\sum_{i=1}^{N_x} \sum_{j=1}^{N_y} 
	\left| 
	R_E^n
	\right|^2
	\right]^{1/2} $$
	\item \textbf{$L^1$error of electric field divergence constraint for IMEX scheme}: \qquad
	$$
	\|\na\cdot\Eb\|_{1}^{I}= 	\frac{1}{N_x N_y}\sum_{i=1}^{N_x} \sum_{j=1}^{N_y} 
	\left|  
	R_I^n
	\right|
	$$
	\item \textbf{$L^2$ error of electric field divergence constraint for IMEX scheme}: \qquad
	$$\|\na\cdot\Eb\|_{2}^{I}=\left[
	\frac{1}{N_x N_y}\sum_{i=1}^{N_x} \sum_{j=1}^{N_y} 
	\left|  
	R_I^n
	\right|^2
	\right]^{1/2}$$
\end{itemize}

\subsubsection{Two-dimensional smooth problem} 
\label{test:smooth_2d}
This case is a two-dimensional version of the test case considered in Section \ref{test:1d_smooth}. We consider
\begin{equation*}
	\frac{\partial \mathbf{U}}{\partial t}
	+ 
	\frac{\partial \mathbf{f}^x}{\partial x} 
	+
	\frac{\partial \mathbf{f}^y}{\partial y} 
	= \mathcal{S} + 
	\mathbf{\mathcal{R}}(x,y,t) 
\end{equation*}
with 
\begin{align*}
	\mathbf{\mathcal{R}}(x,y,t)=\Big(\mathbf{0}_{13},-\dfrac{1}{\sqrt{14}}(2+\sin(2 \pi (x+y-0.5t))),-\dfrac{1}{\sqrt{14}}(2+\sin(2 \pi (x+y-0.5t))),
 \\
 -7\pi\cos(2 \pi (x+y-0.5t)) \Big)^\top.	
\end{align*}
We take initial densities as $\rho_i = \rho_e = 2+\sin(2 \pi (x+y))$, with initial velocities $\ub_i=(0.25,0.25,0)$ and $\ub_{e} = (0.25,0.25,0)$ and initial pressures $p_i=p_e=1$. The magnetic field is taken to be $\Bb= (-2 \sin(2 \pi (x+y)),2 \sin(2 \pi (x+y)),0)$ and the electric field is $\Eb = (0,0,-\sin(2\pi (x+y))$. All other variables are set to zero. We consider the computational domain of $I=[0,1] \times [0,1]$ with the periodic boundary conditions. We also set charge to mass ratios $r_i = 1$ and $r_e = -2$. We use ion-electron adiabatic index $\gamma=5/3$. Using the above conditions, it is easy to verify that the exact solution is $\rho_i = \rho_e = 2+\sin(2 \pi (x+y-0.5 t))$.
\begin{table}[ht]
	\centering
	\begin{tabular}{|c|c|c|c|c|}
		\hline
		Number of cells & \multicolumn{2}{|c}{ \textbf{O2EXP-MultiD} } &
		\multicolumn{2}{|c|}{{\textbf{O2IMEX-MultiD}}} \\
		\hline
		-- & $L^1$ error & Order & $L^1$ error & Order \\
		\hline
		32 & 1.29620e-01 & -- & 1.29620e-01 & -- \\
		64 & 4.83174e-02 & 1.4222 & 4.83262e-02 & 1.4234 \\
		128 & 1.52820e-02 & 1.6608 &  1.52890e-02&  1.6603\\
		256 & 4.25288e-03 & 1.8451 & 4.25412e-03 & 1.8456 \\
		512 & 1.16120e-03 & 1.8728 & 1.16150e-03 & 1.8728 \\
		1024 & 3.13057e-04 & 1.8910 & 3.13113e-04 &  1.8912\\
		2048 & 8.27499e-05 & 1.9196 &  8.27656e-05& 1.9196 \\
		\hline
	\end{tabular}
	\caption[h]{\nameref{test:smooth_2d}: $L^1$ errors and order of convergence for $\rho_i$ using the explicit scheme \textbf{O2EXP-MultiD} and implicit scheme  \textbf{O2IMEX-MultiD}.}
	\label{table:order_2d}
\end{table}	

In Table~\eqref{table:order_2d}, we have presented the $L^1$-errors for explicit \textbf{O2EXP-MultiD} and IMEX scheme \textbf{O2IMEX-MultiD} at the final time $t=10.0$. We have presented $ L^1$ errors for $\rho_i$ at different resolutions. For both explicit and IMEX schemes, we observe a consistent order of accuracy at various resolutions. Furthermore, we also note that the errors for both explicit and IMEX schemes are comparable at a given resolution.

One key feature of this test case is that $\nabla \cdot \Eb\neq \rho_c$ due to the presence of source $\mathbf{\mathcal{R}}$, unlike the other two-dimensional case we have considered. Hence, $R^n_I$ and $R^n_E$ are not vanishing. So, $\|\nabla \cdot E\|_1^I, \|\nabla \cdot E\|_2^I, \|\nabla \cdot E\|_1^E$ and $\|\nabla \cdot E\|_2^E$ are not going to be zeros. But we expect \mdi~and \mde~ schemes to keep the initial nonzero values constant for all time. In Figure~\ref{fig:smooth2d_norms}, we have plotted the magnetic and electric field errors for \mdi,~\mde,~\ote,~\oti,~\phme~and ~\phmi~ schemes. We note that the norms of divergence of the magnetic field are very close to the machine errors for all the schemes. For the norms related to the Guass's law constraint, we see that the \mde,~, and \mdi~ schemes maintain the initial constant norms, whereas all other schemes see some increase. 

\begin{figure}[!htbp]
	\begin{center}
	\subfigure[$\|\na \cdot \mathbf{B}^n\|_1$,  and $\|\na \cdot \mathbf{B}^n\|_2$  errors for explicit schemes  \textbf{O2EXP-MultiD}, \textbf{O2EXP-PHM} and \textbf{O2EXP}.]{
			\includegraphics[width=2.9in, height=2.5in]{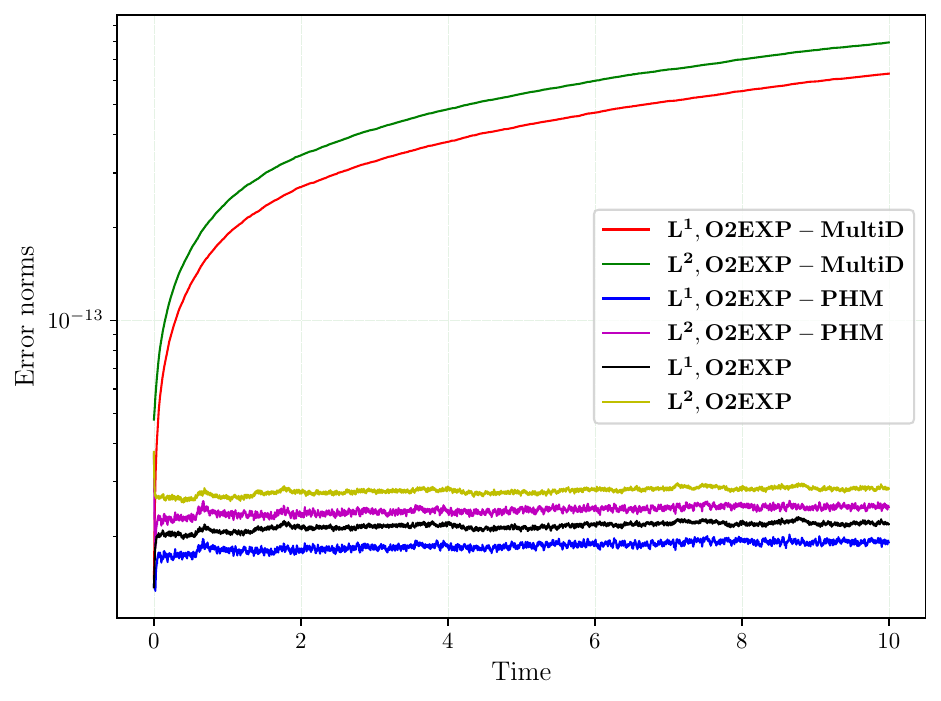}
			\label{fig:smooth2d_B_exp}}
		\subfigure[$\|\na \cdot \mathbf{B}^n\|_1$,  and $\|\na \cdot \mathbf{B}^n\|_2$  errors for IMEX schemes \textbf{O2IMEX-MultiD}, \textbf{O2IMEX-PHM} and \textbf{O2IMEX}.]{
			\includegraphics[width=2.9in, height=2.5in]{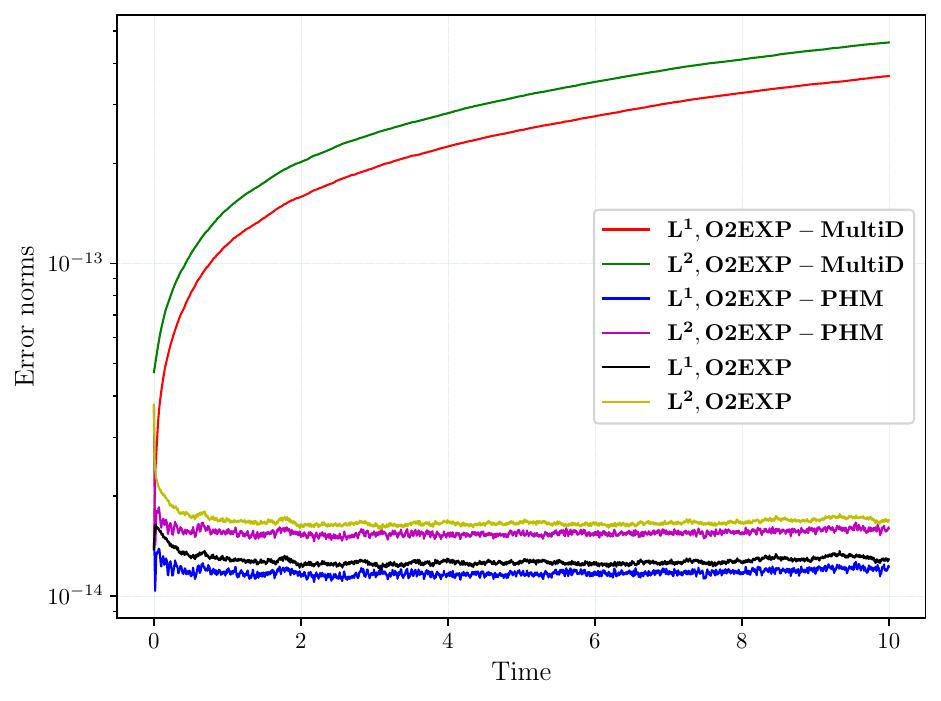}
			\label{fig:smooth2d_B_imp}}
			\subfigure[$\|\na\cdot\Eb\|_{1}^{E},$ and $\|\na\cdot\Eb\|_{2}^{E}$ errors, for explicit schemes  \textbf{O2EXP-MultiD}, \textbf{O2EXP-PHM} and \textbf{O2EXP}.]{
		\includegraphics[width=2.9in, height=2.5in]{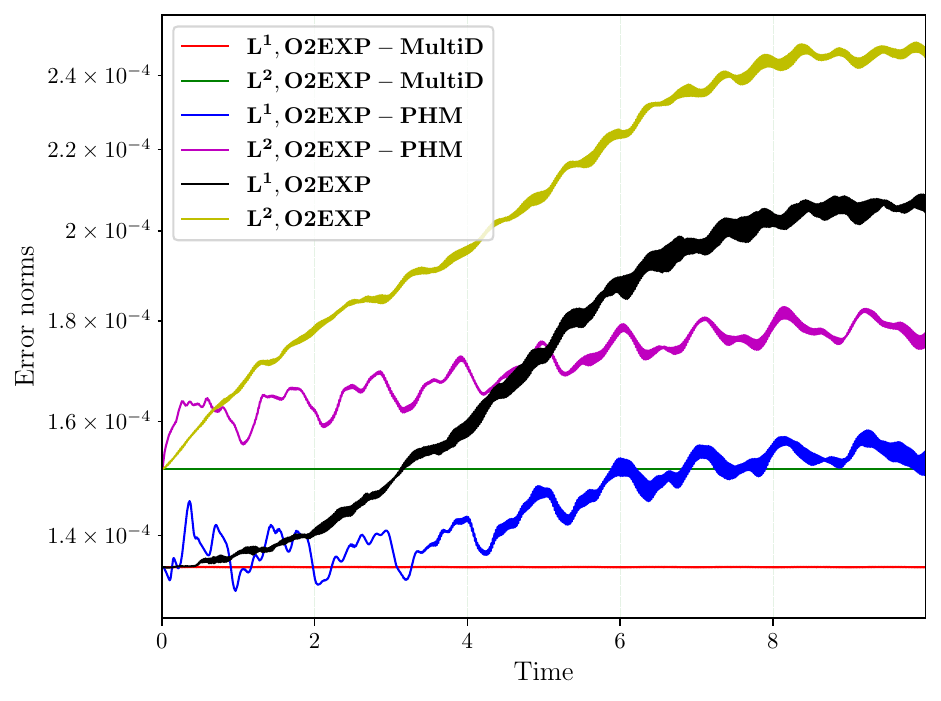}
		\label{fig:smooth2d_E_exp}}
	\subfigure[$\|\na\cdot\Eb\|_{1}^{I},$ and $\|\na\cdot\Eb\|_{2}^{I}$ errors, for IMEX schemes \textbf{O2IMEX-MultiD}, \textbf{O2IMEX-PHM} and \textbf{O2IMEX}.]{
	\includegraphics[width=2.9in, height=2.5in]{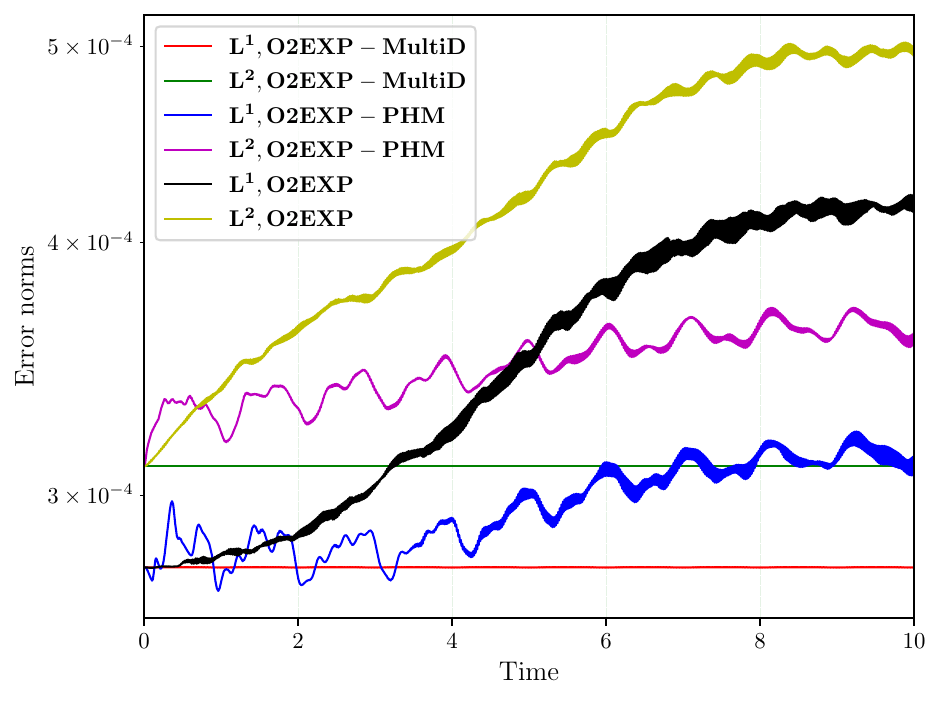}
	\label{fig:smooth2d_E_imp}}
		\caption{\nameref{test:smooth_2d}: Evolution of the divergence constraints errors of magnetic and electric fields for different schemes using $100 \times 100$ cells.}
		\label{fig:smooth2d_norms}
	\end{center}
\end{figure}

\subsubsection{Relativistic Orzag-Tang vortex} 
\label{test:2d_ot}
We consider the test cases from \cite{Balsara2016,bhoriya_entropydg_2023, Bhoriya2023}, which is motivated by the MHD test case proposed in \cite{Orszag1979}. To compare the results with those presented in \cite{Balsara2016}, we consider Maxwell's equations with scaled source terms. The computational domain is taken to be $[0,1] \times [0,1]$ with periodic boundary conditions. The initial setup is given by,
\begin{align*}
	\begin{pmatrix*}[c]
		\rho_i \\ u_{i}^x \\ u_{i}^y \\ p_i
	\end{pmatrix*} =
	\begin{pmatrix*}[c]
		\frac{25}{72 \pi} \\ - \frac{\sin(2 \pi y)}{2} \\ \frac{\sin(2 \pi x)}{2} \\ \frac{5}{24 \pi}
	\end{pmatrix*}, \qquad
	\begin{pmatrix*}[c]
		\rho_e \\u_{e}^x \\ u_{e}^y \\ p_e
	\end{pmatrix*} =
	\begin{pmatrix*}[c]
		\frac{25}{72 \pi} \\ - \frac{\sin(2 \pi y)}{2} \\  \frac{\sin(2 \pi x)}{2} \\  \frac{5}{24 \pi}
	\end{pmatrix*}, \qquad
	\begin{pmatrix*}[c]
		B_x \\ B_y
	\end{pmatrix*} =
	\begin{pmatrix*}[c]
		-\sin(2 \pi y) \\  \sin (4 \pi x)
	\end{pmatrix*}
\end{align*}
The expression for the initial electric field is derived using $-\mathbf{u}_i \times \mathbf{B}$. All the remaining variables are assumed to be vanishing. We take $\gamma_i=\gamma_e = 5/3$ and use the charge to mass ratios of $r_i=-r_e=10^3/\sqrt{4 \pi}$, which corresponds to the plasma skin depth of $3.0 \times 10^{-3}$, approximately. Simulations are performed till time $t = 1.0$ with $200\times 200$ cells.

Results are presented in Figure \ref{fig:ot}. Figures \ref{fig:ot_o2_exp_multiD_rho}-\ref{fig:ot_o2_exp_multiD_MagB}, contains the plots of total density, total pressure, Lorentz factor, and $|\Bb|$ using \mde~scheme. Figures  \ref{fig:ot_o2_imp_multiD_rho}-\ref{fig:ot_o2_imp_multiD_MagB}, contain the plots of total density, total pressure, Lorentz factor and $|\Bb|$ using \mdi~scheme. We observe that both schemes have the same performance and are able to capture the different solution structures. Furthermore, results match with those presented in \cite{Balsara2016}.

In Figures \ref{fig:ot_B_norm_exp_es_Rus}-\ref{fig:ot_E_norm_imp_es_Rus}, we have plotted the errors in electrogmagnetic constraints for \mde~and \mdi~schemesa and also compared these schemes with \ote, \oti,\phme~and \phmi~schemes. We observe that for the magnetic field divergence constraint, the proposed schemes (\mde~and \mdi) keep the errors to the machine's precision. Whereas, the other schemes have high divergence errors. For the electric field divergence constraint error, both PHM schemes and proposed schemes have similar performance but are better than the \ote~and\oti~schemes, which have errors increasing with time.

\begin{figure}[!htbp]
	\begin{center}
		\subfigure[Plot of total density $(\rho_i +\rho_e)$ for \mde.]{
			\includegraphics[width=1.4in, height=1.25in]{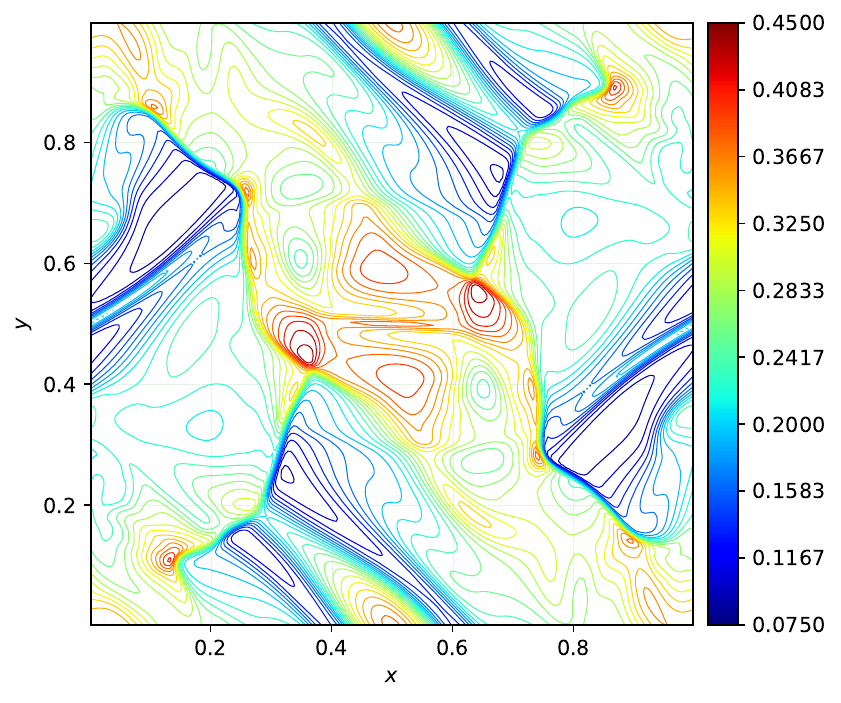}
			\label{fig:ot_o2_exp_multiD_rho}}
		\subfigure[Plot of total pressure $(p_{i} + p_e)$ for \mde]{
			\includegraphics[width=1.4in, height=1.25in]{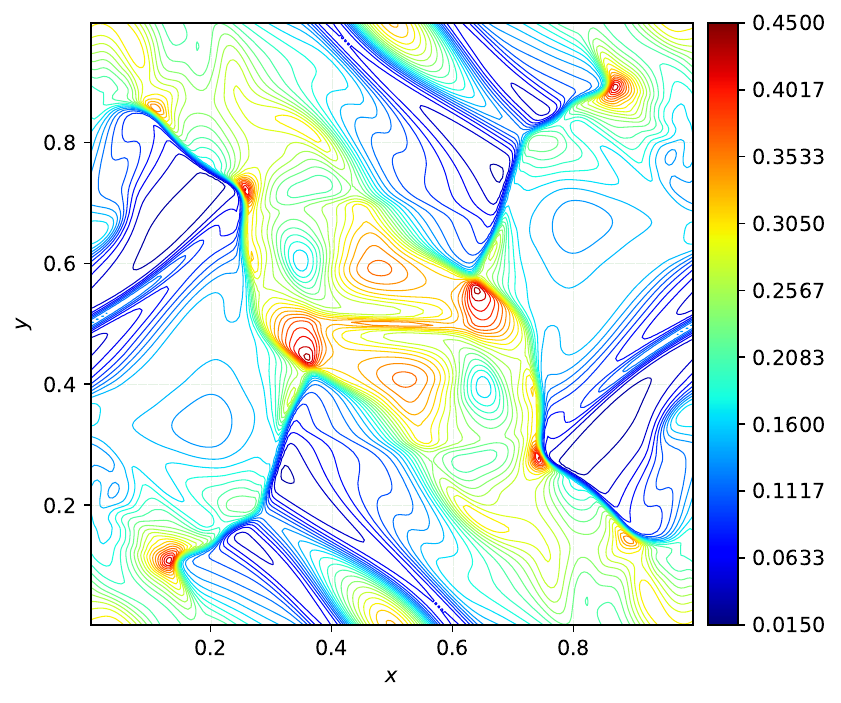}
			\label{fig:ot_o2_exp_multiD_p}}
		\subfigure[Plot of ion Lorentz factor $\Gamma_i$ for \mde]{
			\includegraphics[width=1.4in, height=1.25in]{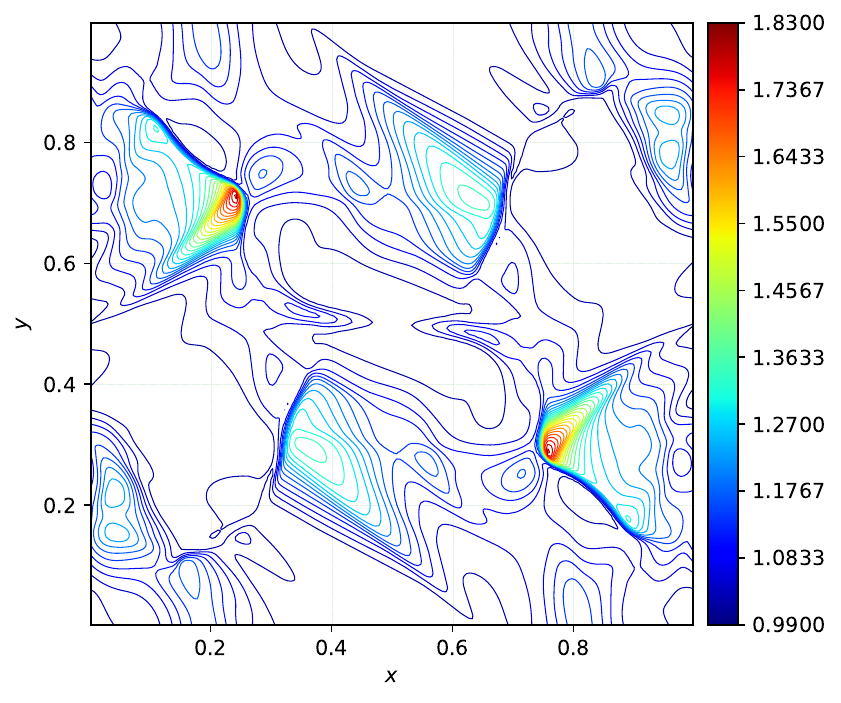}
			\label{fig:ot_o2_exp_multiD_lorentz}}
		\subfigure[Plot of magnitude of the Magnetic field, $|\mathbf{B}|$ for \mde]{
			\includegraphics[width=1.4in, height=1.25in]{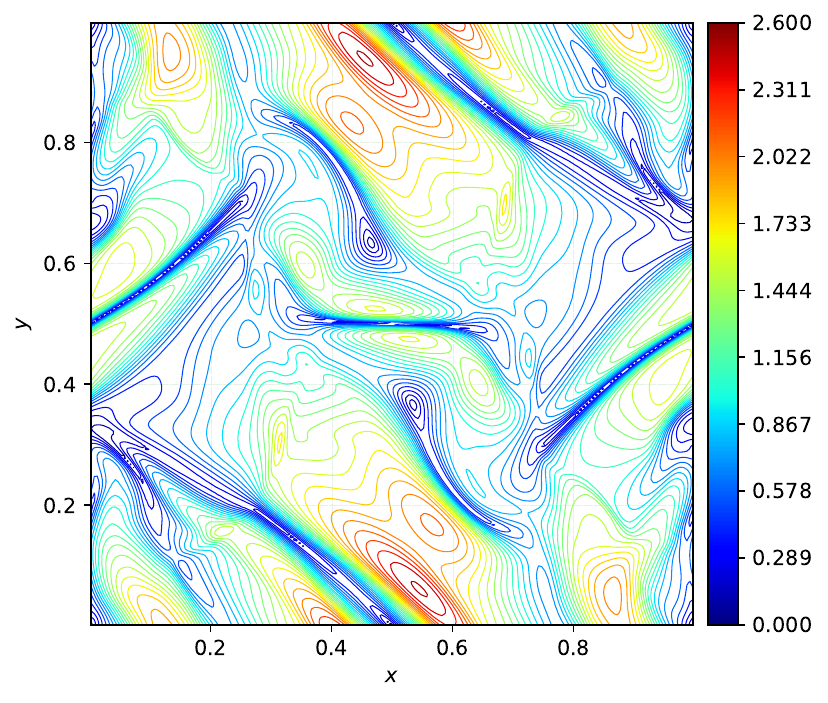}
			\label{fig:ot_o2_exp_multiD_MagB}}
 		\subfigure[Plot of total density $(\rho_i +\rho_e)$ for \mdi]{
			\includegraphics[width=1.4in, height=1.25in]{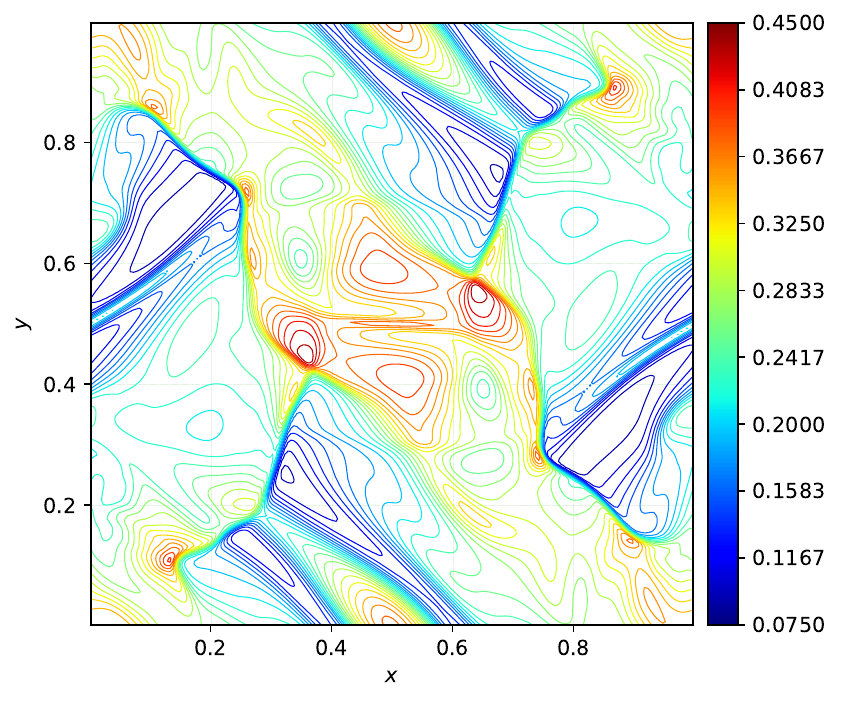}
			\label{fig:ot_o2_imp_multiD_rho}}
		\subfigure[Plot of total pressure $(p_{i} + p_e)$ for \mdi.]{
			\includegraphics[width=1.4in, height=1.25in]{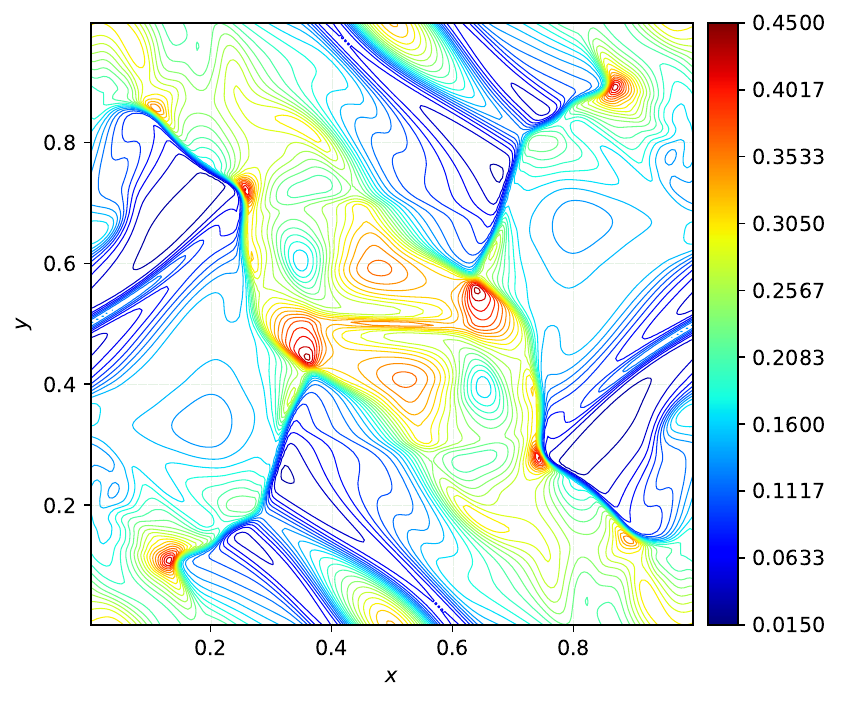}
			\label{fig:ot_o2_imp_multiD_p}}
		\subfigure[Plot of ion Lorentz factor $\Gamma_i$ for \mdi.]{
			\includegraphics[width=1.4in, height=1.25in]{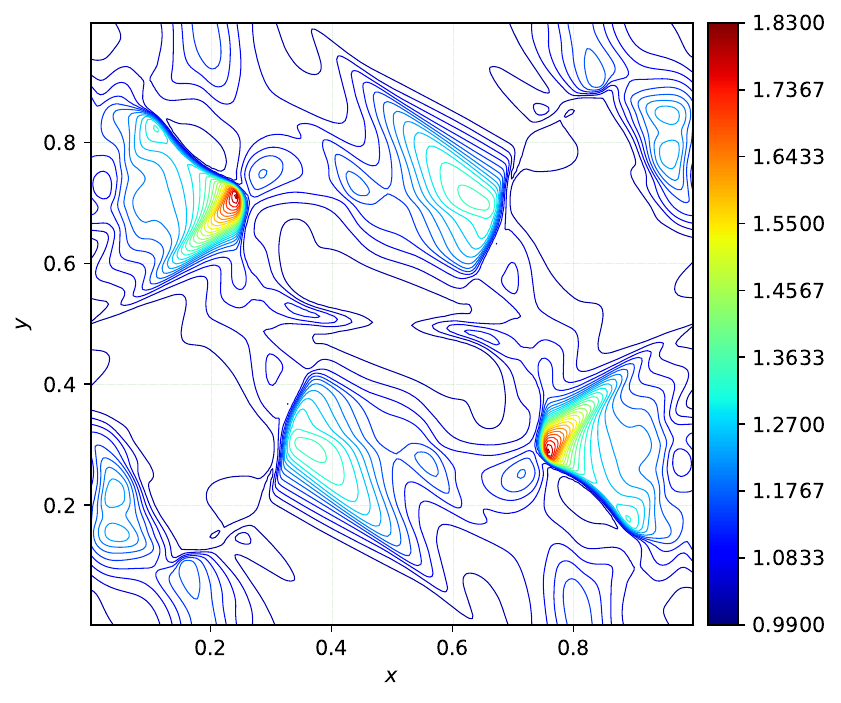}
			\label{fig:ot_o2_imp_multiD_lorentz}}
		\subfigure[Plot of magnitude of the Magnetic field, $|\mathbf{B}|$ for \mdi.]{
			\includegraphics[width=1.4in, height=1.25in]{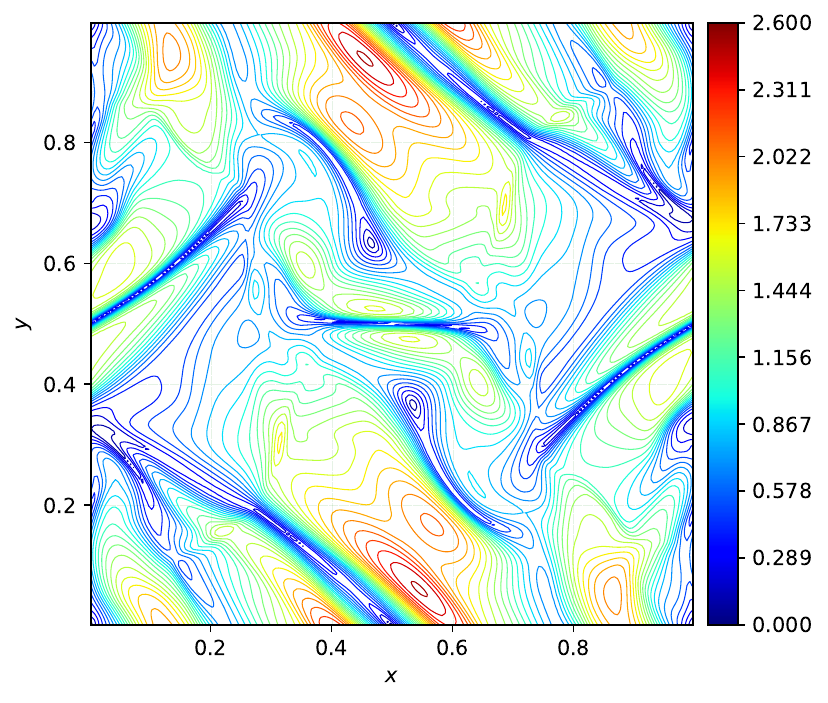}
			\label{fig:ot_o2_imp_multiD_MagB}}
	\subfigure[$\|\na \cdot \mathbf{B}^n\|_1$,  and $\|\na \cdot \mathbf{B}^n\|_2$  errors for explicit schemes.]{
	\includegraphics[width=1.4in, height=1.25in]{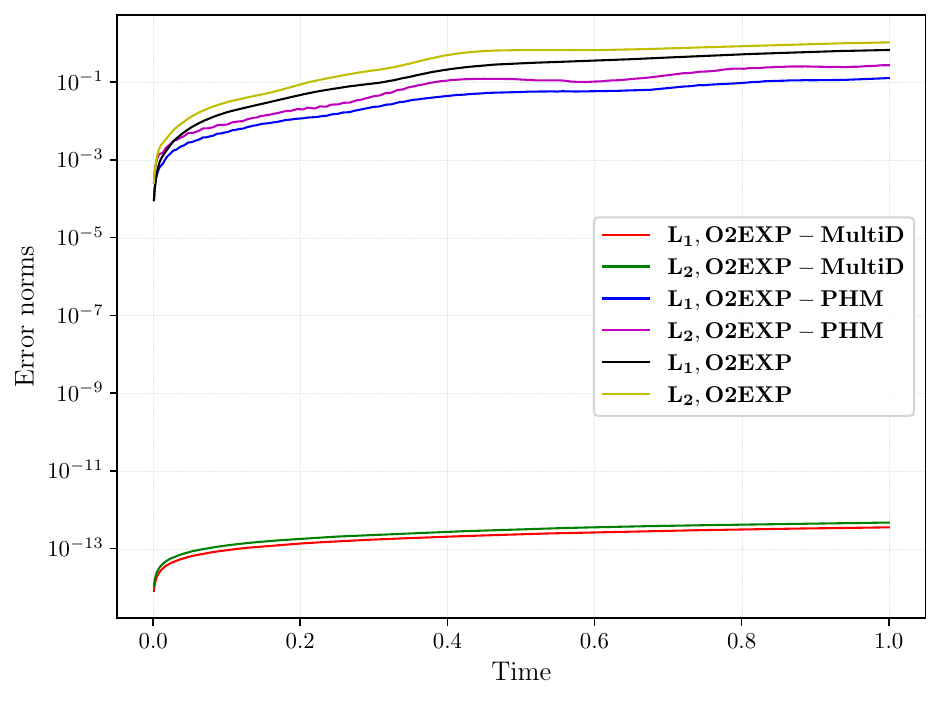}
	\label{fig:ot_B_norm_exp_es_Rus}}
\subfigure[$\|\na \cdot \mathbf{B}^n\|_1$,  and $\|\na \cdot \mathbf{B}^n\|_2$  errors for IMEX schemes.]{
	\includegraphics[width=1.4in, height=1.25in]{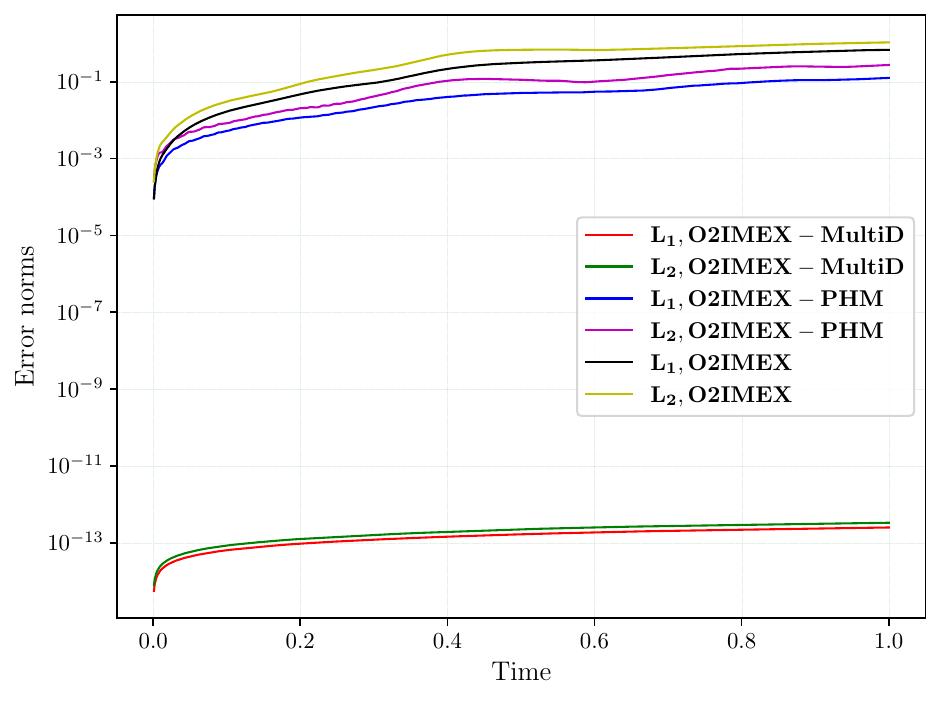}
	\label{fig:ot_B_norm_imp_es_Rus}}
\subfigure[$\|\na\cdot\Eb\|_{1}^{E},$ and $\|\na\cdot\Eb\|_{2}^{E}$ errors, for explicit schemes.]{
	\includegraphics[width=1.4in, height=1.25in]{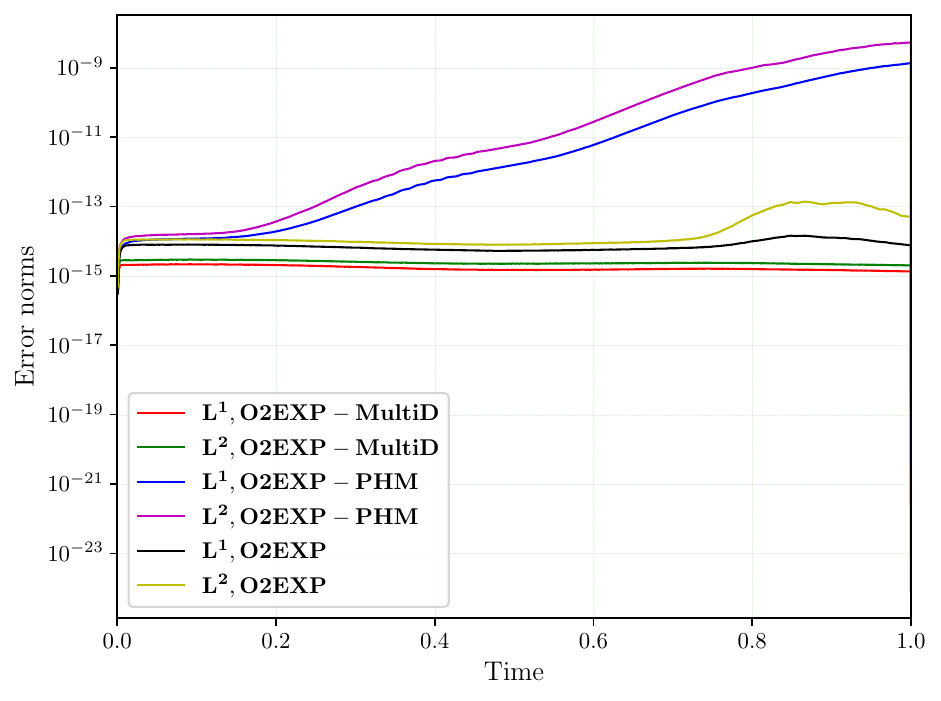}
	\label{fig:ot_E_norm_exp_es_Rus}}
\subfigure[$\|\na\cdot\Eb\|_{1}^{I},$ and $\|\na\cdot\Eb\|_{2}^{I}$ errors, for IMEX schemes.]{
	\includegraphics[width=1.4in, height=1.25in]{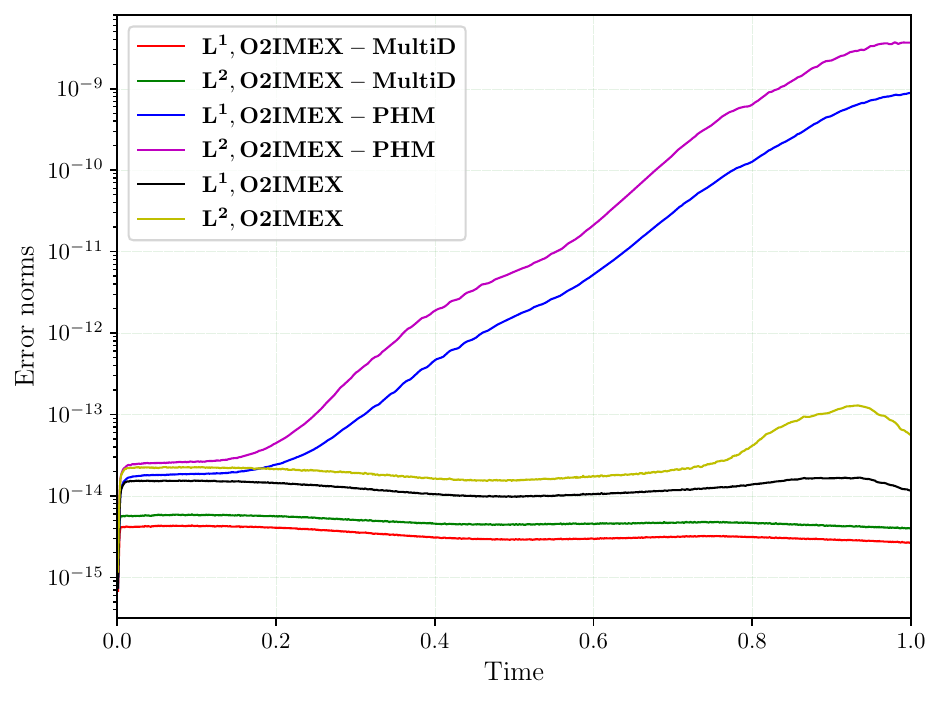}
	\label{fig:ot_E_norm_imp_es_Rus}}
		\caption{\nameref{test:2d_ot}: Figures \ref{fig:ot_o2_exp_multiD_rho}-\ref{fig:ot_o2_imp_multiD_MagB} contains plots of total density, total pressure, Ion Lorentz factor, and magnitude of the magnetic field $|\mathbf{B}|$ using the \textbf{O2EXP-MultiD} and \mdi~scheme with $200\times 200$ cells at time $t=1.0$. Figures \ref{fig:ot_B_norm_exp_es_Rus}-\ref{fig:ot_E_norm_imp_es_Rus} contains evolution of the divergence constraints errors of magnetic and electric fields for all the schemes.}
		\label{fig:ot}		
	\end{center}
\end{figure}

\subsubsection{Relativistic two-fluid blast problem} 
\label{test:2d_blast}
We consider the test case from \cite{Amano2016, Balsara2016, Bhoriya2023,bhoriya_entropydg_2023}, which is generalized from a similar RMHD test case from ~\cite{Komissarov1999}. The domain of the problem considered is $[-6,6]\times[-6,6]$ and we consider Neumann boundary conditions on all boundaries. Let us define, $\rho_{in}=10^{-2}$, $p_{in}=1.0$, $\rho_{out}=10^{-4}$ and $p_{out}=5 \times 10^{-4}$. Also, consider the radial distance $r=\sqrt{x^2+y^2}$ from the origin. Inside the disc $r<0.8$, we consider $\rho_i=\rho_e=0.5 \times \rho_{in}$ and  $p_i=p_e=0.5 \times p_{in}$. Outside the disc $r>1$, we consider $\rho_i=\rho_e=0.5 \times \rho_{out}$ and $p_i=p_e=0.5 \times p_{out}$. In the region $0.8 \le r \le 1.0$, the densities and pressures are defined using a linear profile, by imposing continuity at $r=0.8$ and $r=1.0$. The magnetic field has been initialized in the $x$-direction only, as $B_x=B_0$. All other variables are assumed to be vanishing. We consider the charge-to-mass ratios to be $r_i=-r_e=10^3$ and  $\gamma_i=\gamma_e=4/3$. We consider $B_0=0.1$ (weakly magnetized medium) and $B_0=1.0$ (strongly magnetized medium). The simulations are performed till the final time $t=4.0.$

The numerical solutions are presented in Figures \ref{fig:blast_o2_exp_0.1_multiD}, \ref{fig:blast_o2_imp_0.1_multiD}, \ref{fig:blast_o2_exp_1.0_multiD}, and  \ref{fig:blast_o2_imp_1.0_multiD}. We have plotted $\log_{10}(\rho_i+\rho_e)$, $\log_{10}(p_i+p_e)$, $\Gamma_i$ and $\dfrac{|\mathbf{B}|^2}{2}$ variables for the scheme \mde~ and \mdi~for weakly and strongly magnetized mediums. For both the mediums, both explicit and IMEX schemes are able to capture all the waves and have similar performance.

In Figures \ref{fig:blast_div_0p1} and \ref{fig:blast_div_1p0}, we have presented the error norms for the divergence constraints for weakly and strongly magnetized medium. In both cases, we note that the magnetic and electric field divergence constraints are satisfied by the proposed schemes up to the machine's precision. For the other schemes, the electric field divergence constraint is satisfied, up to the machine precision, but the divergence of the magnetic field is very large. This demonstrates the superiority of the proposed schemes in ensuring both constraints.

\begin{figure}[!htbp]
	\begin{center}
		\subfigure[$\log_{10}(\rho_i+\rho_e)$.]{
			\includegraphics[width=1.4in, height=1.25in]{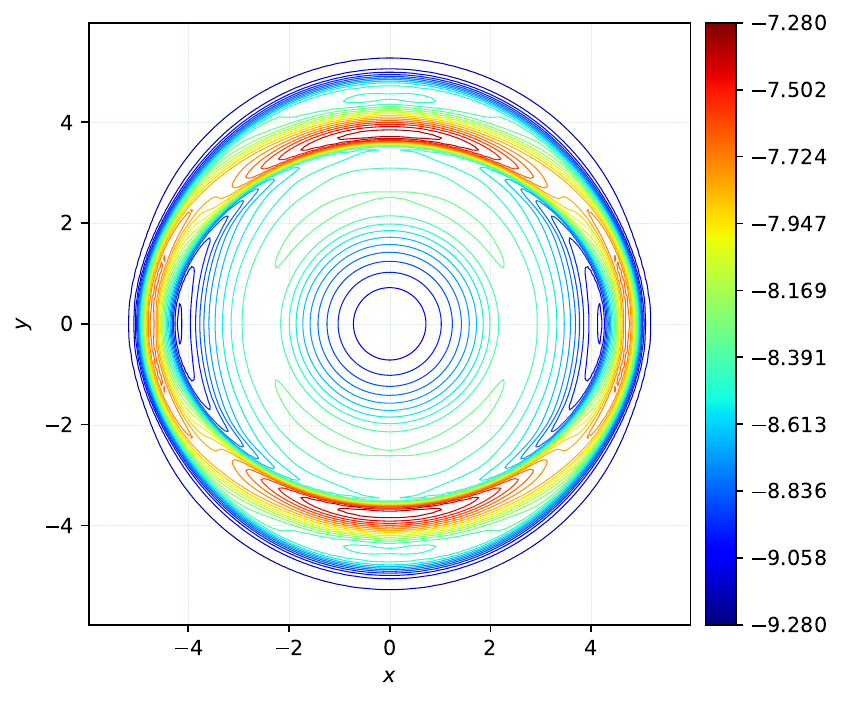}
			\label{fig:blast_o2_exp_0.1_multiD_logrho}}
		\subfigure[$\log_{10}(p_i+p_e)$.]{
			\includegraphics[width=1.4in, height=1.25in]{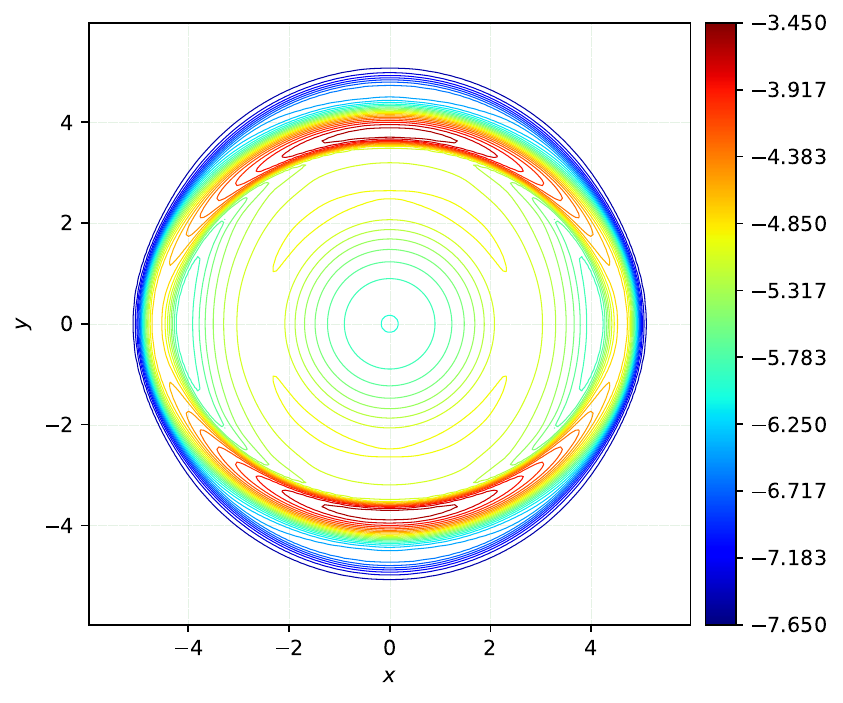}
			\label{fig:blast_o2_exp_0.1_multiD_logp}}
		\subfigure[$\Gamma_i$.]{
			\includegraphics[width=1.4in, height=1.25in]{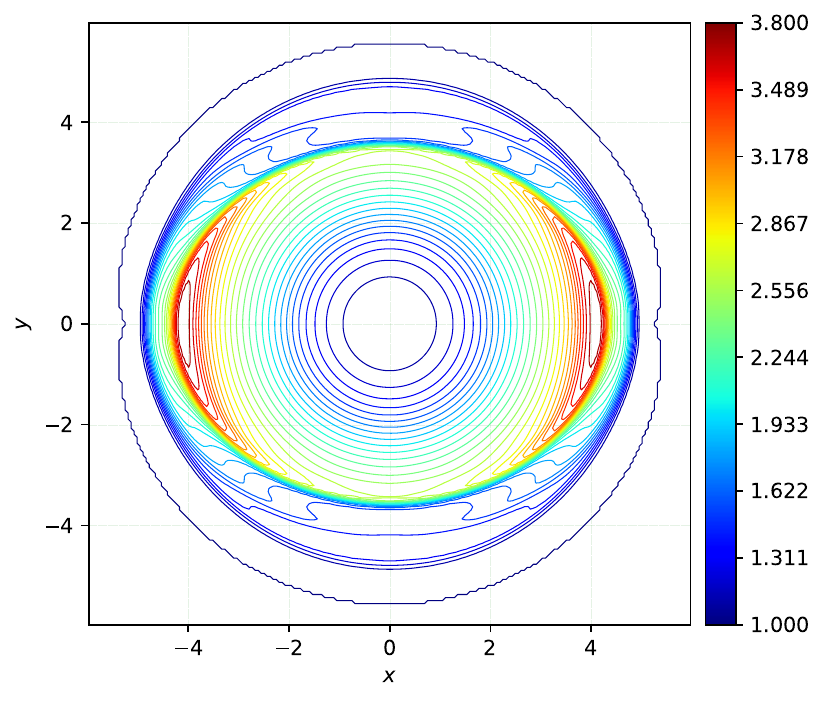}
			\label{fig:blast_o2_exp_0.1_multiD_lorentz}}
		\subfigure[$\dfrac{|\mathbf{B}|^2}{2}$]{
			\includegraphics[width=1.4in, height=1.25in]{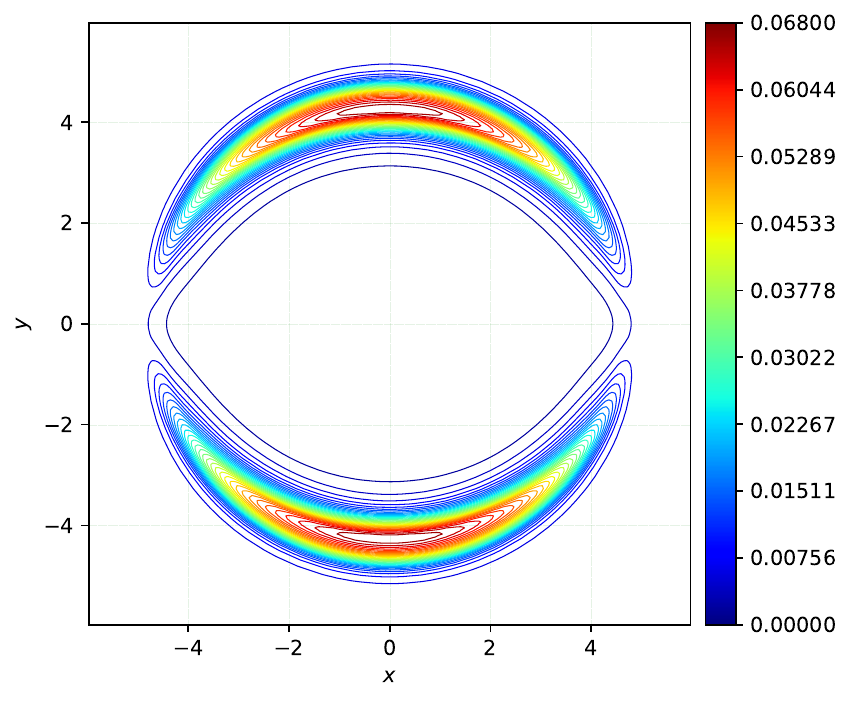}
			\label{fig:blast_o3_exp_0.1_multiD_MagBsqby2}}
		\caption{\nameref{test:2d_blast}: Plots of $\log_{10}(\rho_i+\rho_e)$, $\log_{10}(p_i+p_e)$, $\Gamma_i$ and  $\dfrac{|\mathbf{B}|^2}{2}$ for the weakly magnetized medium with $B_0=0.1$, using \textbf{O2EXP-MultiD} scheme on $200\times 200$ cells.}
		\label{fig:blast_o2_exp_0.1_multiD}
	\end{center}
\end{figure}

\begin{figure}[!htbp]
	\begin{center}
		\subfigure[Plot of $\log_{10}(\rho_i+\rho_e)$.]{
			\includegraphics[width=1.4in, height=1.25in]{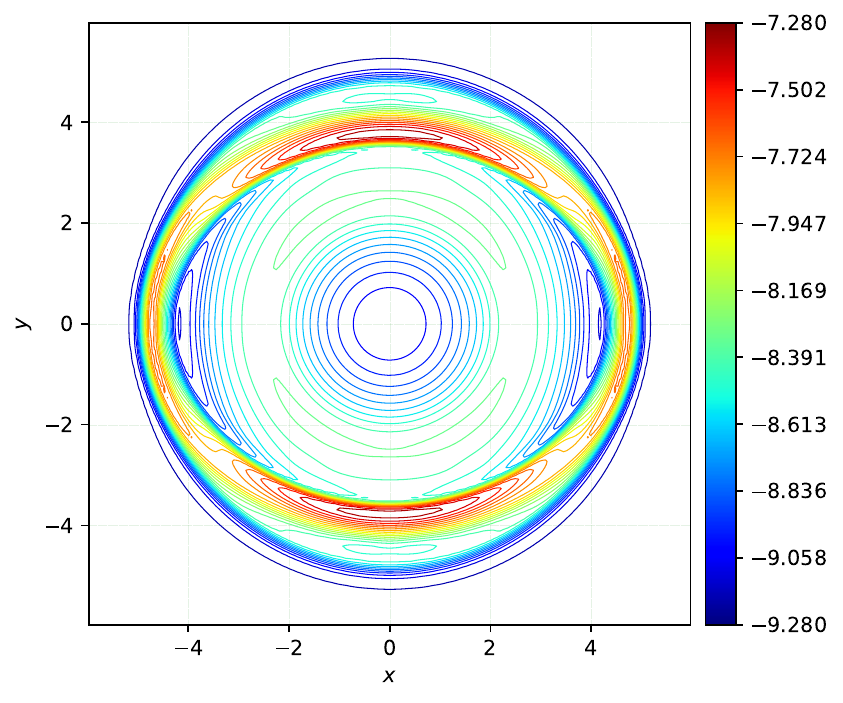}
			\label{fig:blast_o2_imp_0.1_multiD_logrho}}
		\subfigure[$\log_{10}(p_i+p_e)$.]{
			\includegraphics[width=1.4in, height=1.25in]{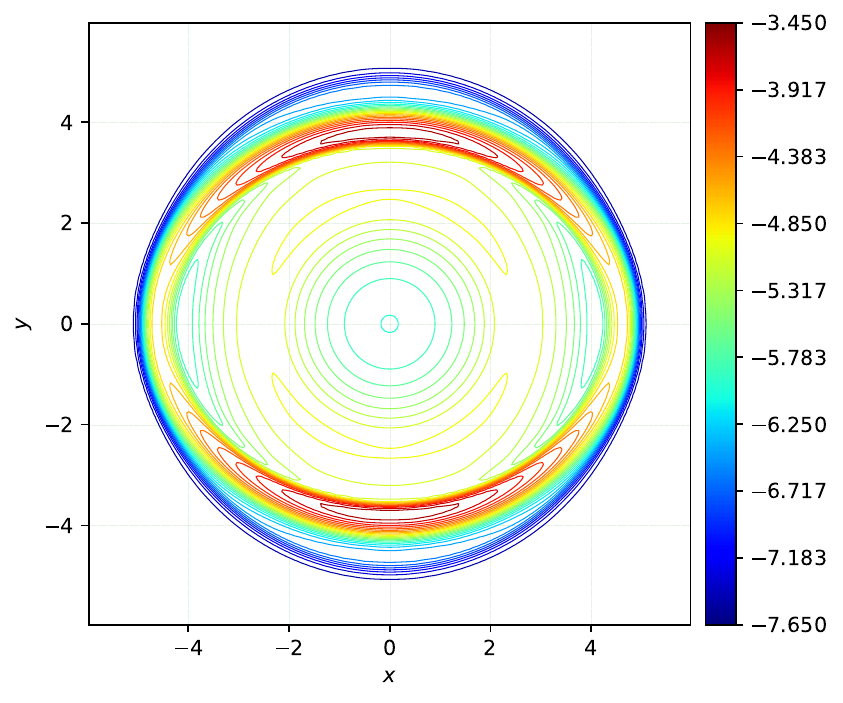}
			\label{fig:blast_o2_imp_0.1_multiD_logp}}
		\subfigure[$\Gamma_i$.]{
			\includegraphics[width=1.4in, height=1.25in]{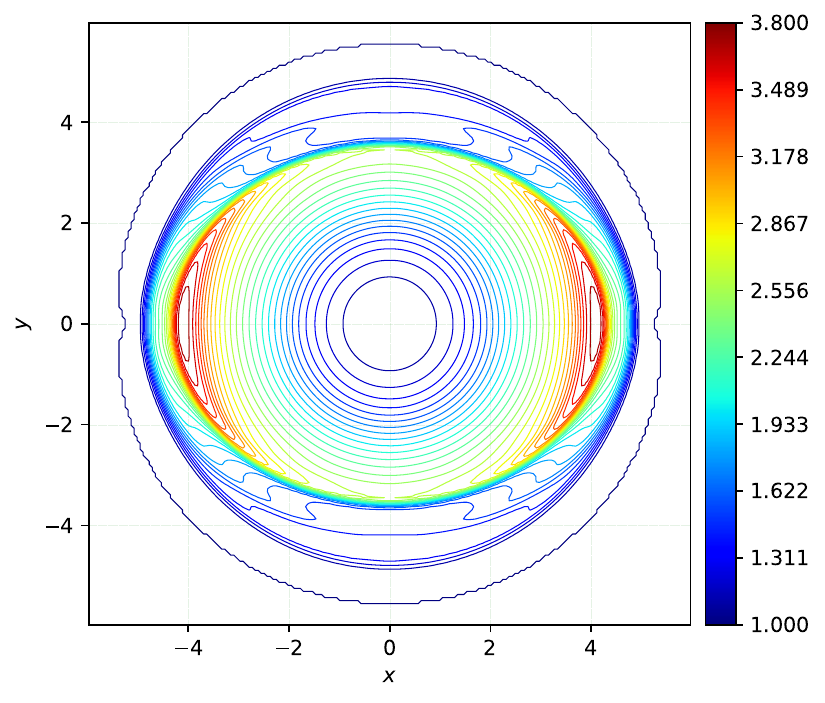}
			\label{fig:blast_o2_imp_0.1_multiD_lorentz}}
		\subfigure[$\dfrac{|\mathbf{B}|^2}{2}$.]{
			\includegraphics[width=1.4in, height=1.25in]{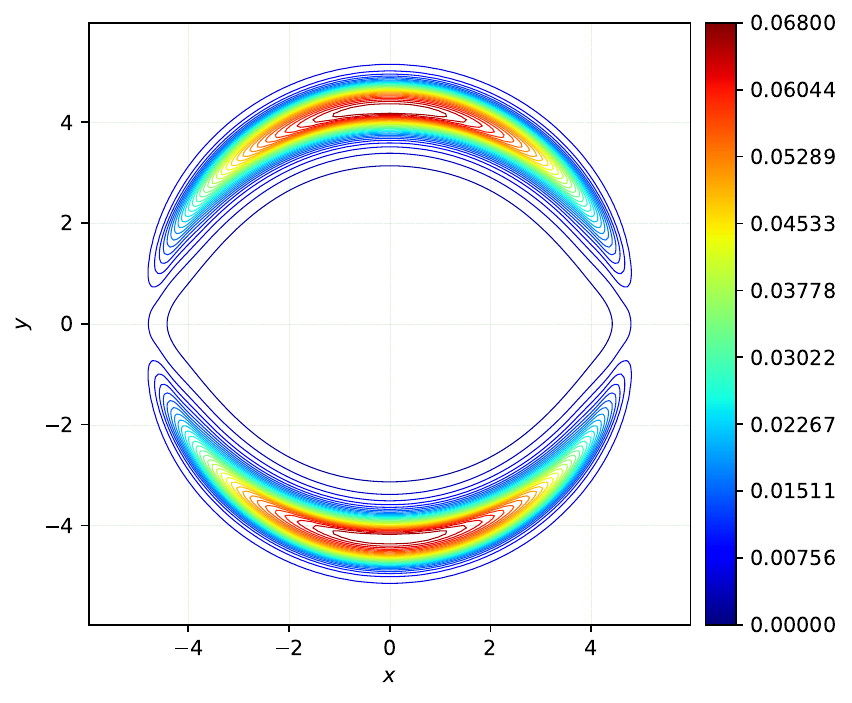}
			\label{fig:blast_o3_imp_0.1_multiD_MagBsqby2}}
		\caption{\nameref{test:2d_blast}: Plots of $\log_{10}(\rho_i+\rho_e)$, $\log_{10}(p_i+p_e)$, $\Gamma_i$ and  $\dfrac{|\mathbf{B}|^2}{2}$ for the weakly magnetized medium with $B_0=0.1$, using \textbf{O2IMP-MultiD} scheme on $200\times 200$ cells.}
		\label{fig:blast_o2_imp_0.1_multiD}
	\end{center}
\end{figure}
\begin{figure}[!htbp]
	\begin{center}
		\subfigure[$\log_{10}(\rho_i+\rho_e)$.]{
			\includegraphics[width=1.4in, height=1.25in]{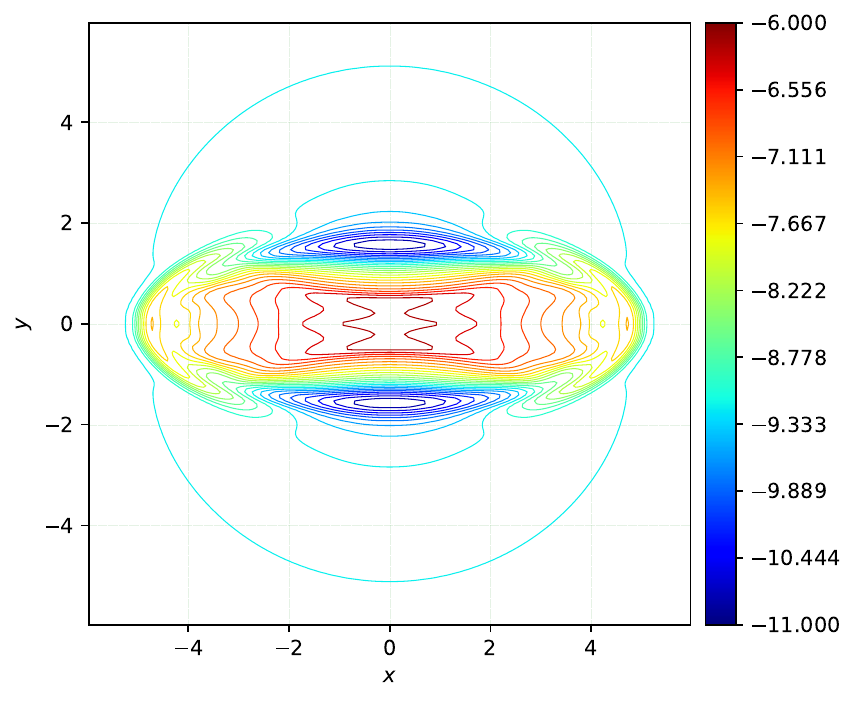}
			\label{fig:blast_o2_exp_1.0_multiD_logrho}}
		\subfigure[$\log_{10}(p_i+p_e)$.]{
			\includegraphics[width=1.4in, height=1.25in]{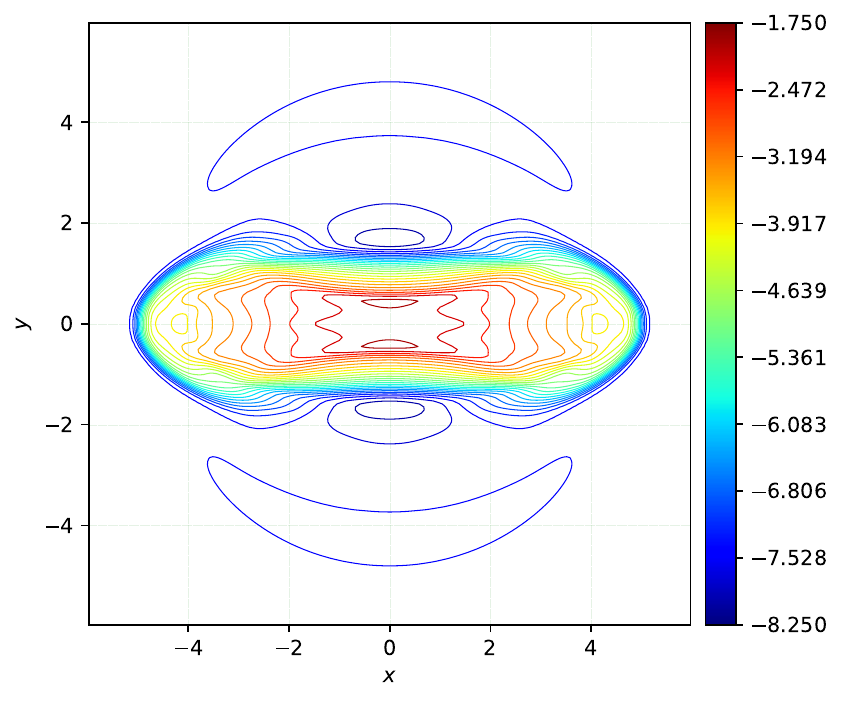}
			\label{fig:blast_o2_exp_1.0_multiD_logp}}
		\subfigure[$\Gamma_i$.]{
			\includegraphics[width=1.4in, height=1.25in]{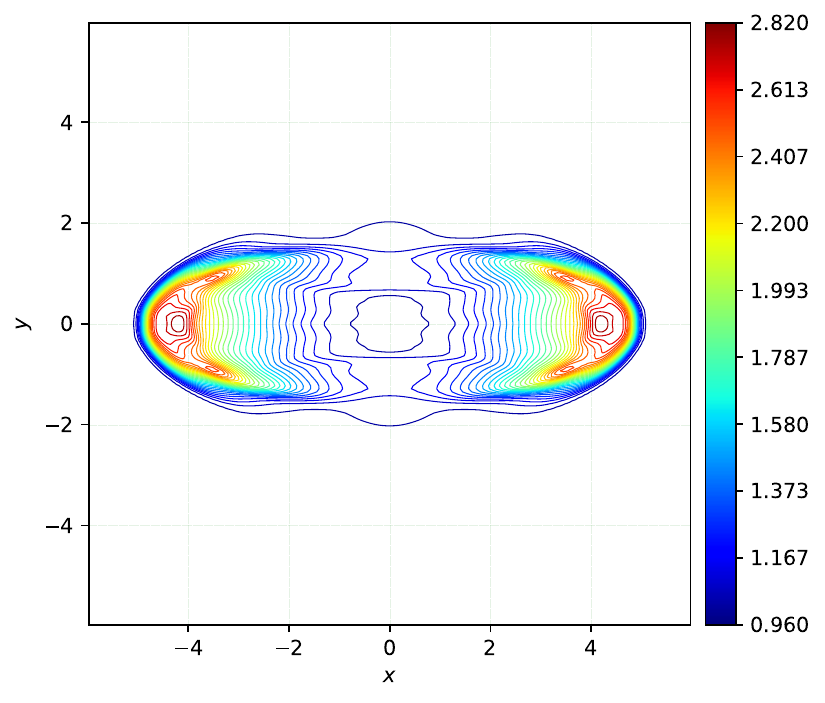}
			\label{fig:blast_o2_exp_1.0_multiD_lorentz}}
		\subfigure[$\dfrac{|\mathbf{B}|^2}{2}$.]{
			\includegraphics[width=1.4in, height=1.25in]{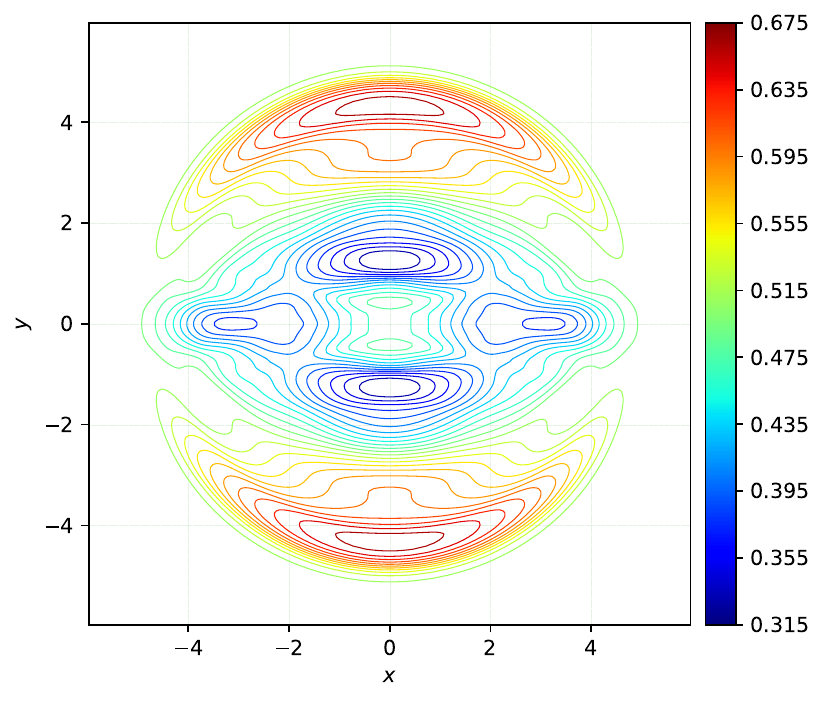}
			\label{fig:blast_o3_exp_1.0_multiD_MagBsqby2}}
		\caption{\nameref{test:2d_blast}: Plots of $\log_{10}(\rho_i+\rho_e)$, $\log_{10}(p_i+p_e)$, $\Gamma_i$ and  $\dfrac{|\mathbf{B}|^2}{2}$ for the strongly magnetized medium with $B_0=1.0$, using \textbf{O2EXP-MultiD} scheme on $200\times 200$ cells.}
		\label{fig:blast_o2_exp_1.0_multiD}
	\end{center}
\end{figure}

\begin{figure}[!htbp]
	\begin{center}
		\subfigure[Plot of $\log_{10}(\rho_i+\rho_e)$.]{
			\includegraphics[width=1.4in, height=1.25in]{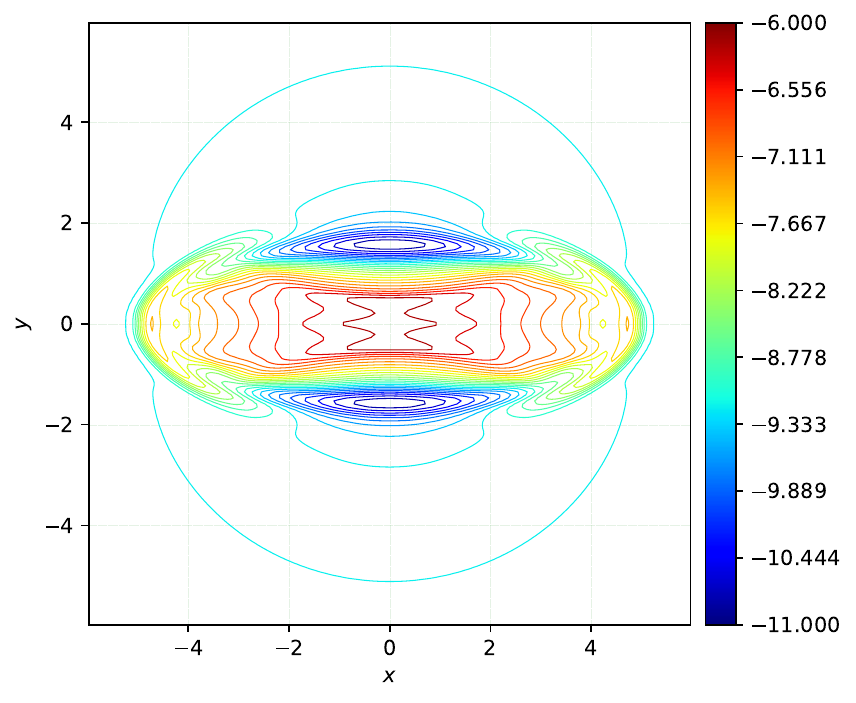}
			\label{fig:blast_o2_imp_1.0_multiD_logrho}}
		\subfigure[$\log_{10}(p_i+p_e)$.]{
			\includegraphics[width=1.4in, height=1.25in]{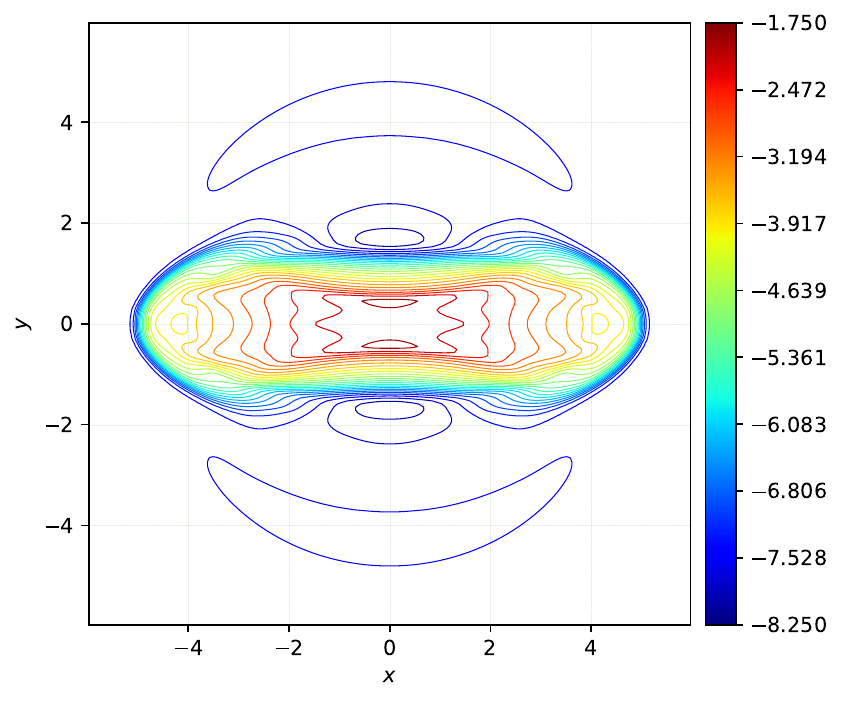}
			\label{fig:blast_o2_imp_1.0_multiD_logp}}
		\subfigure[$\Gamma_i$.]{
			\includegraphics[width=1.4in, height=1.25in]{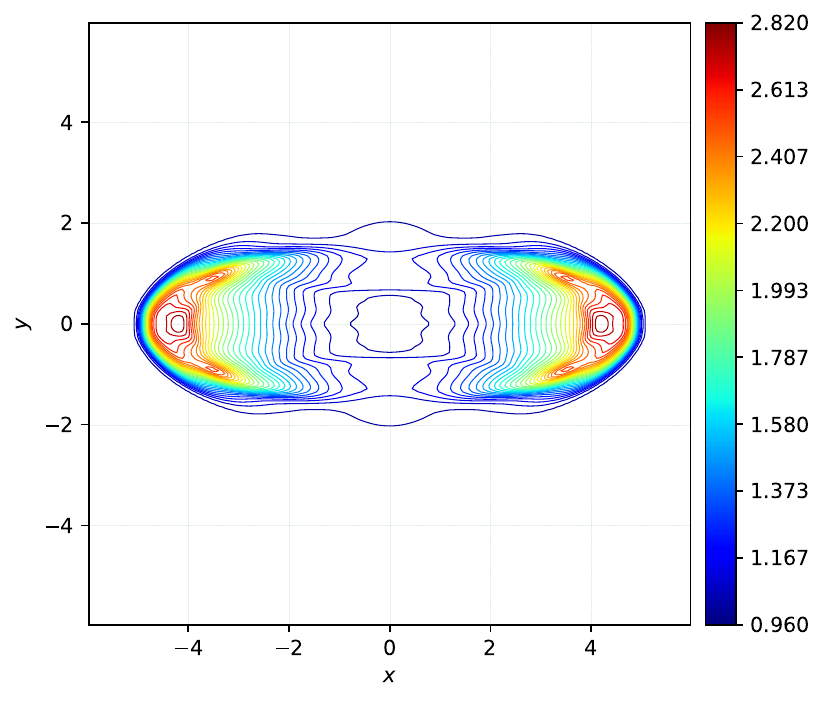}
			\label{fig:blast_o2_imp_1.0_multiD_lorentz}}
		\subfigure[$\dfrac{|\mathbf{B}|^2}{2}$.]{
			\includegraphics[width=1.4in, height=1.25in]{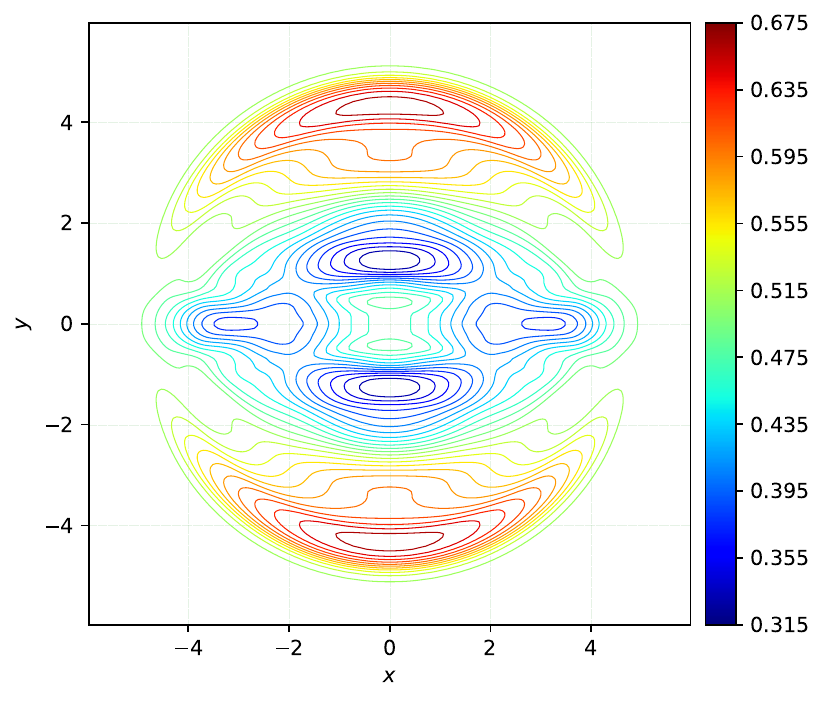}
			\label{fig:blast_o3_imp_1.0_multiD_MagBsqby2}}
		\caption{\nameref{test:2d_blast}: Plots of $\log_{10}(\rho_i+\rho_e)$, $\log_{10}(p_i+p_e)$, $\Gamma_i$ and  $\dfrac{|\mathbf{B}|^2}{2}$ for the strongly magnetized medium with $B_0=1.0$, using \textbf{O2IMP-MultiD} scheme on $200\times 200$ cells.}
		\label{fig:blast_o2_imp_1.0_multiD}
	\end{center}
\end{figure}

\begin{figure}[!htbp]
	\begin{center}
		\subfigure[$\|\na \cdot \mathbf{B}^n\|_1$,  and $\|\na \cdot \mathbf{B}^n\|_2$  errors for explicit schemes.]{
			\includegraphics[width=1.4in, height=1.25in]{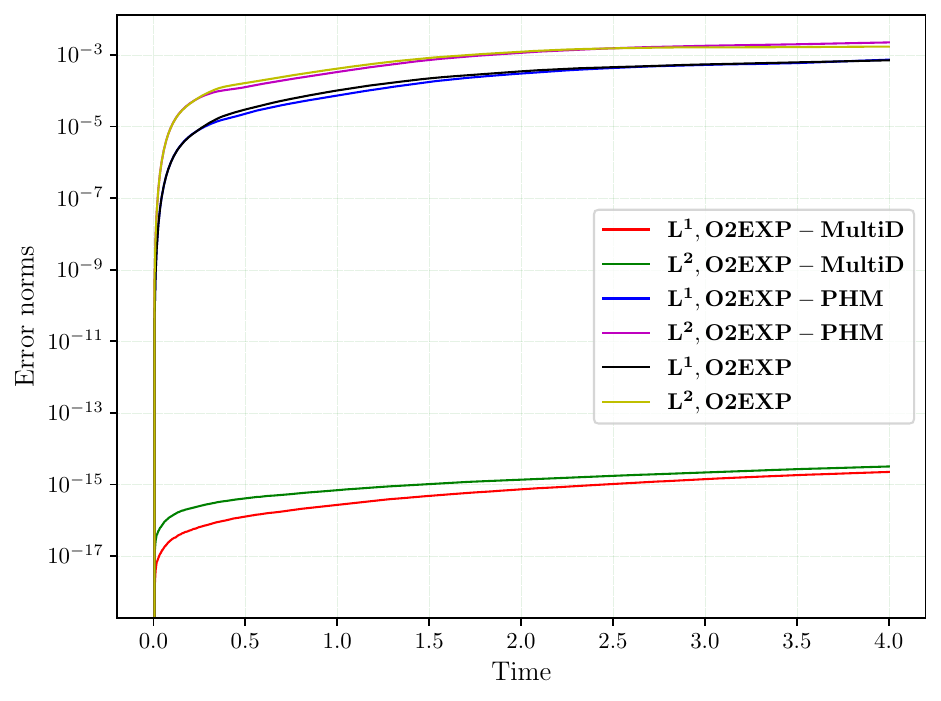}
			\label{fig:blast_B_norm_exp_es_Rus}}
		\subfigure[$\|\na \cdot \mathbf{B}^n\|_1$,  and $\|\na \cdot \mathbf{B}^n\|_2$  errors for IMEX schemes.]{
			\includegraphics[width=1.4in, height=1.25in]{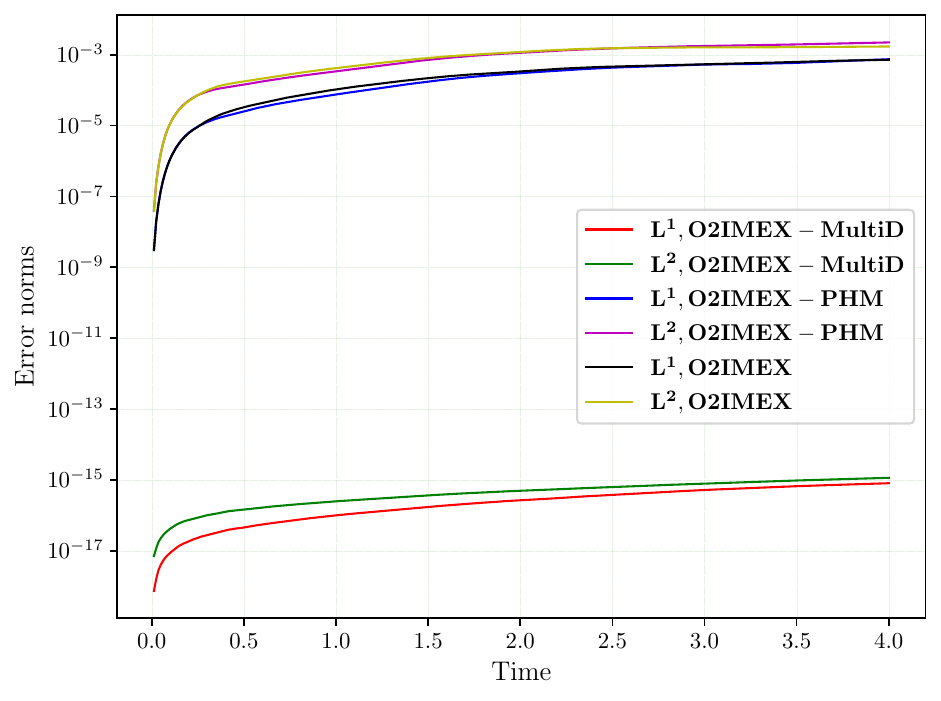}
			\label{fig:blast_B_norm_imp_es_Rus}}
			\subfigure[$\|\na\cdot\Eb\|_{1}^{E},$ and $\|\na\cdot\Eb\|_{2}^{E}$ errors, for explicit schemes.]{
			\includegraphics[width=1.4in, height=1.25in]{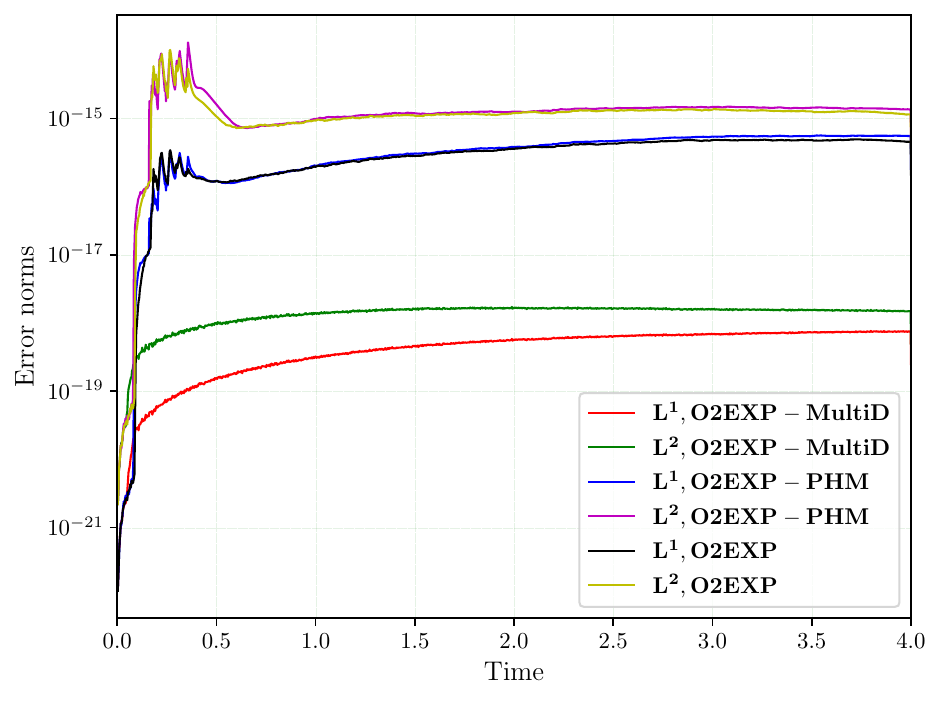}
			\label{fig:blast_E_norm_exp_es_Rus}}
		\subfigure[$\|\na\cdot\Eb\|_{1}^{I},$ and $\|\na\cdot\Eb\|_{2}^{I}$ errors, for IMEX schemes.]{
			\includegraphics[width=1.4in, height=1.25in]{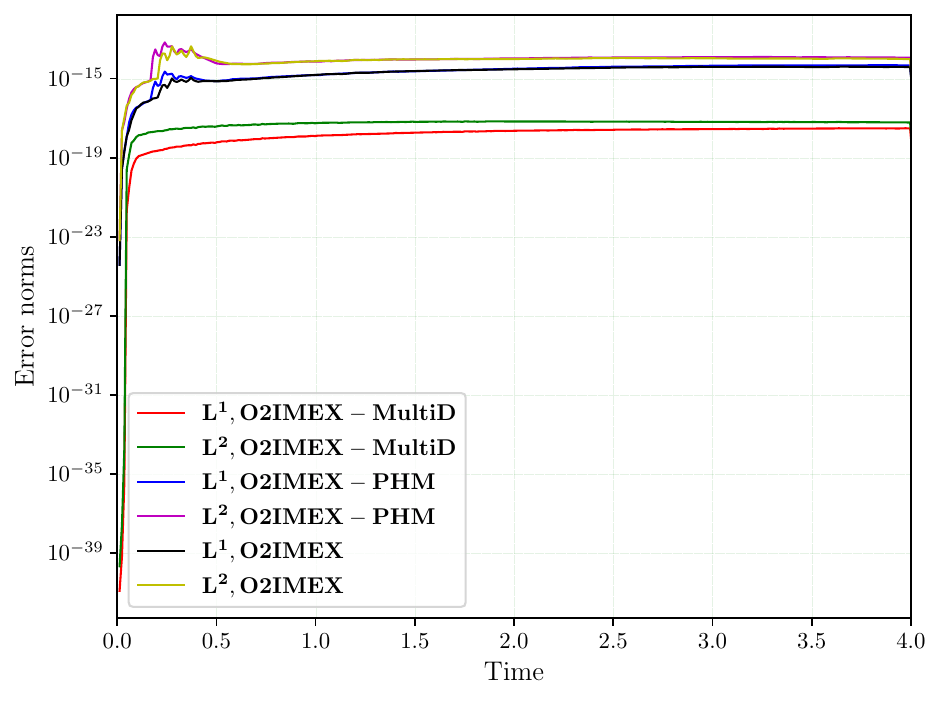}
			\label{fig:blast_E_norm_imp_es_Rus}}
		\caption{\nameref{test:2d_blast}  Evolution of the divergence constraints errors of magnetic and electric fields for explicit (\textbf{O2EXP-MultiD}, \textbf{O2EXP-PHM} and textbf{O2EXP}) and IMEX (\textbf{O2IMEX-MultiD}, \textbf{O2IMEX-PHM}, and \textbf{O2IMEX}) schemes using $200\times 200$ cells, for weakly magnetized medium, $B_0=0.1$.}
		\label{fig:blast_div_0p1}
	\end{center}
\end{figure}

\begin{figure}[!htbp]
	\begin{center}
		\subfigure[$\|\na \cdot \mathbf{B}^n\|_1$,  and $\|\na \cdot \mathbf{B}^n\|_2$  errors for explicit schemes.]{
			\includegraphics[width=1.4in, height=1.25in]{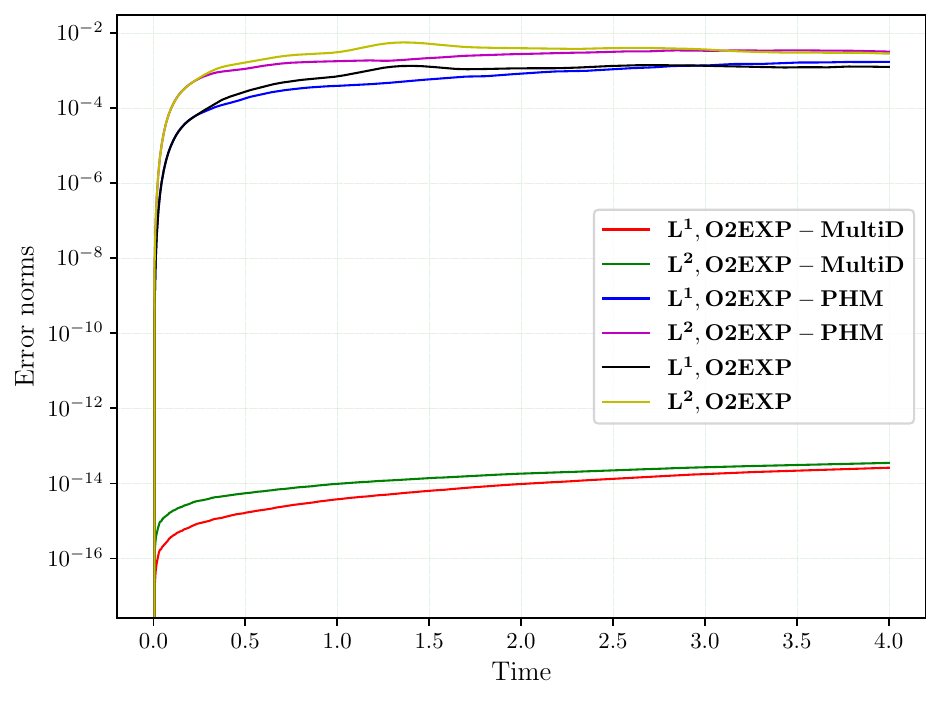}
		}
		\subfigure[$\|\na \cdot \mathbf{B}^n\|_1$,  and $\|\na \cdot \mathbf{B}^n\|_2$  errors for IMEX schemes.]{
			\includegraphics[width=1.4in, height=1.25in]{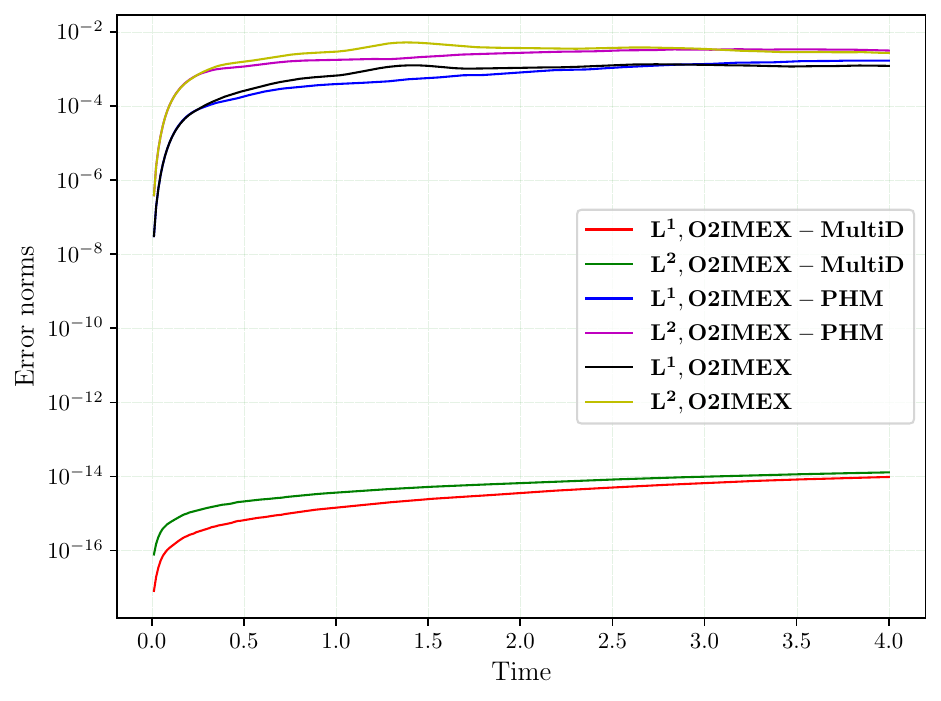}
		}
		\subfigure[$\|\na\cdot\Eb\|_{1}^{E},$ and $\|\na\cdot\Eb\|_{2}^{E}$ errors, for explicit schemes.]{
		\includegraphics[width=1.4in, height=1.25in]{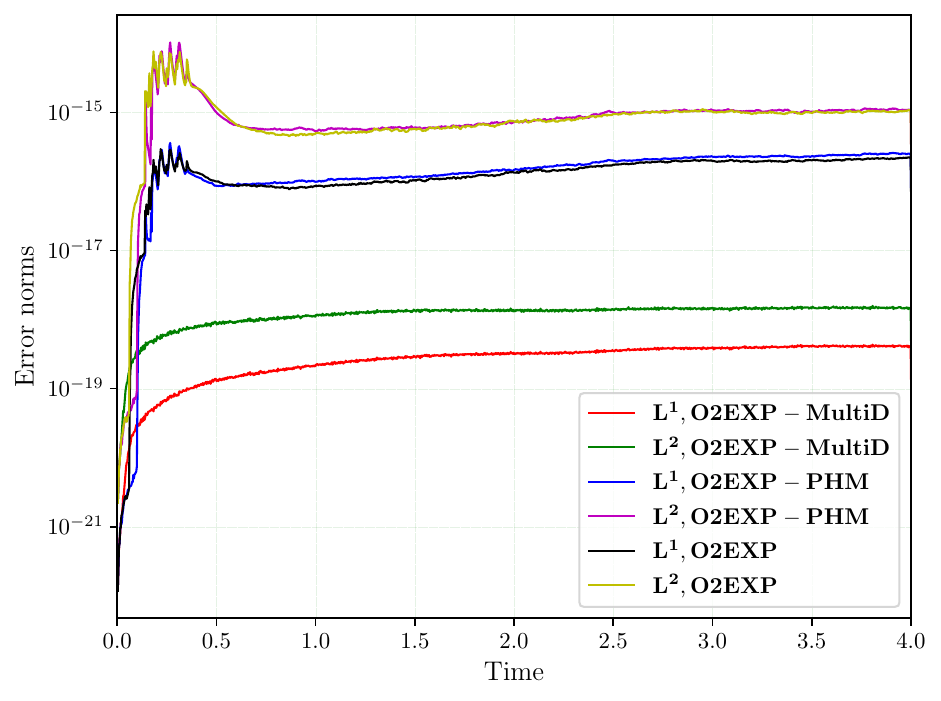}
	}
	\subfigure[$\|\na\cdot\Eb\|_{1}^{I},$ and $\|\na\cdot\Eb\|_{2}^{I}$ errors, for IMEX schemes.]{
		\includegraphics[width=1.4in, height=1.25in]{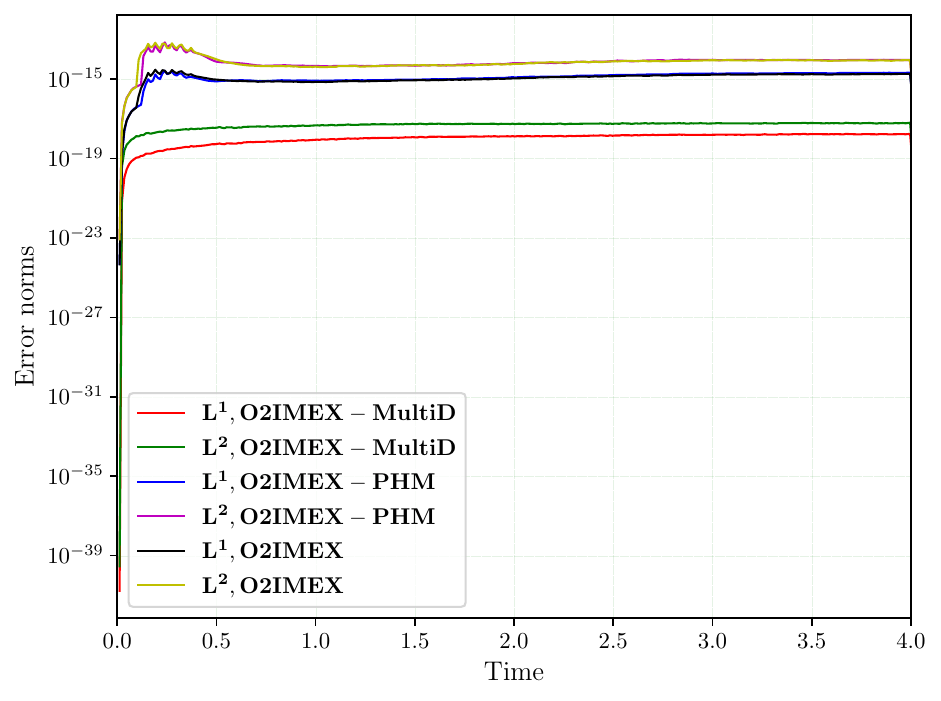}
	}
		\caption{\nameref{test:2d_blast}  Evolution of the divergence constraints errors of magnetic and electric fields for explicit (\textbf{O2EXP-MultiD}, \textbf{O2EXP-PHM} and \textbf{O2EXP}) and IMEX (\textbf{O2IMEX-MultiD}, \textbf{O2IMEX-PHM}, and \textbf{O2IMEX}) schemes using $200\times 200$ cells, for strongly magnetized medium, $B_0=1.0$.}
		\label{fig:blast_div_1p0}
	\end{center}
\end{figure}

\subsubsection{Relativistic two-fluid GEM challenge problem} 
\label{test:2d_gem}
This test case is motivated by the non-relativistic two-fluid relativistic Geospace Environment Modelling (GEM) magnetic reconnection problem from  \cite{Birn2001,wang2020}. The relativistic version is considered in \cite{Amano2016, Balsara2016, Bhoriya2023,bhoriya_entropydg_2023}. Here, we consider the additional resistive effects given by Eqn. \eqref{eq:resistive} with the resistivity constant as $\eta=0.01$. The domain of the problem is given by $[-L_x/2,L_x/2] \times [-L_y/2,L_y/2]$ where $L_x=8\pi$ and $L_y=4\pi$. We consider the periodic boundary conditions at $x=\pm L_x/2$ and conducting wall boundary at $y=\pm L_y/2$ boundaries. We take $r_i=1.0$ and $r_e=-25.0.$ We consider the unperturbed magnetic field of $(B_0 \tan (y/d),0,0)$ with $B_0=1.0$, where $d=1.0$ is the thickness of the current sheet. After perturbing the magnetic field the initial conditions are given by,
\begin{align*}
	\begin{pmatrix}
		\rho_i  \\
		u_{z}^i \\
		p_i     \\ 
		\rho_e  \\
		u_{z}^e \\
		p_e     \\ 
		B_x     \\
		B_y
	\end{pmatrix} =
	\begin{pmatrix}
		n                                                          \\
		\frac{1}{2d}\frac{B_0 \text{sech}^2(y/d)}{n}               \\
		0.2 + \frac{B_0^2  \text{sech}^2(y/d)}{4} \frac{5}{24 \pi}  \\
		\frac{m_e}{m_i} n                                          \\
		-u_i^z                                                   \\
		- p_i                                                      \\ 
		B_0 \tan (y/d) - B_0 \psi_0 \frac{\pi}{L_y} \cos(\frac{\pi x}{L_x}) \sin(\frac{\pi y}{L_y})
		\\
		B_0 \psi_0 \frac{\pi}{L_x} \sin(\frac{\pi x}{L_x}) \cos(\frac{\pi y}{L_y})
	\end{pmatrix}
\end{align*}
where $n=\mathrm{sech}^2(y/d)+0.2$. All other variables are set to zero. The adiabatic indices are taken as $\gamma_i =\gamma_e= 4.0/3.0$.

We use $512\times 256$ cells and compute till time $t=100.$ In Figures \ref{fig:gem_o2_80_exp_multiD} and \ref{fig:gem_o2_80_imp_multiD}, we plotted the results for \mde~and\mdi~schemes, respectively at time $t=80$. We observe that magnetic reconnection is underway. We also note that flow is indeed relativistic as $u_e^x$ reached the value of $0.6$. Both schemes have similar accuracy and performance.
 
In Figure \eqref{fig:reconn_rate}, we also plot the time evolution of the reconnected magnetic flux,
$$\psi(t) = \dfrac{1}{2 B_0} \int_{-L_x/2}^{L_x/2}
| B_y(x,y=0,t) | dx.
$$
for all the schemes. We note that all of them have similar reconnection rates and are comparable to the results from \cite{Amano2016} (Solid lines). In Figure~\ref {fig:gem_div_norms}, we plot the evolution of the errors in divergence constraints. We observe that the proposed schemes perform much better than the other schemes for both constraints. In particular, we note that Gauss's law is approximated much more accurately by the proposed schemes.

\begin{figure}[!htbp]
	\begin{center}
		\subfigure[$\rho_i + \rho_e$]{
			\includegraphics[width=2.9in, height=1.4in]{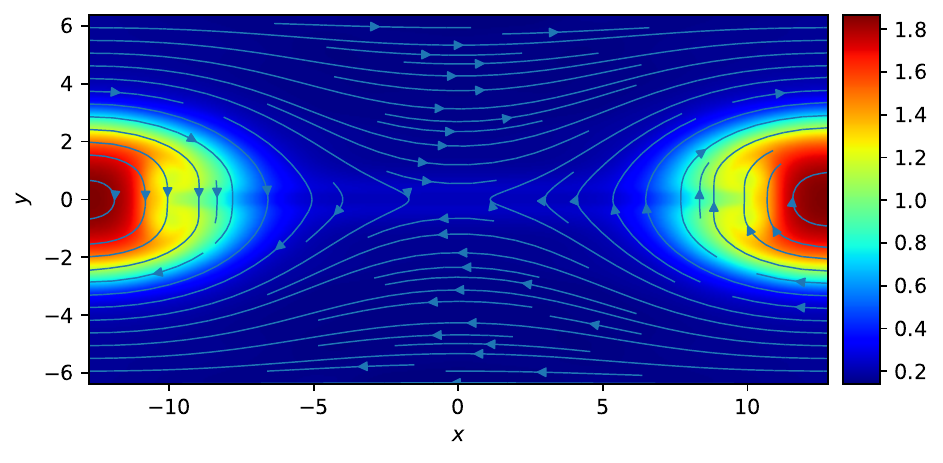}
			\label{fig:gem_exp_multiD_t80_512_rho}}
		\subfigure[$B_z$]{
			\includegraphics[width=2.9in, height=1.4in]{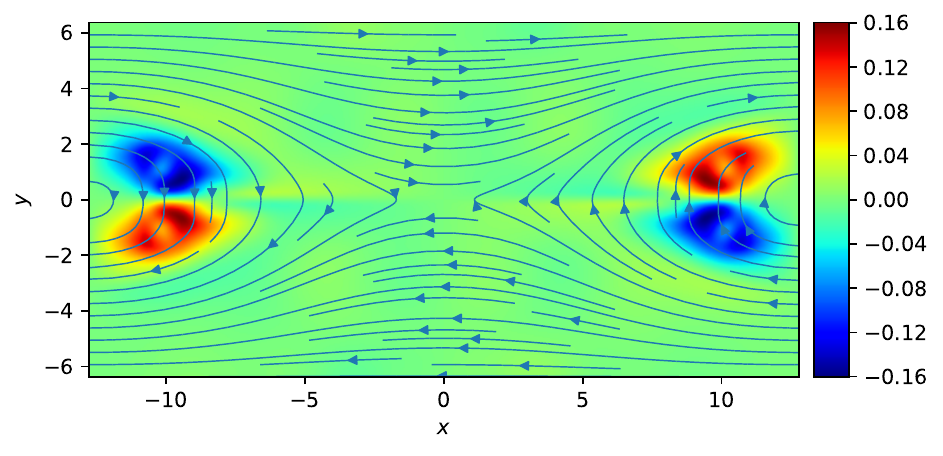}
			\label{fig:gem_exp_multiD_t80_512_Bz}}
		\subfigure[$u_i^x$]{
			\includegraphics[width=2.9in, height=1.4in]{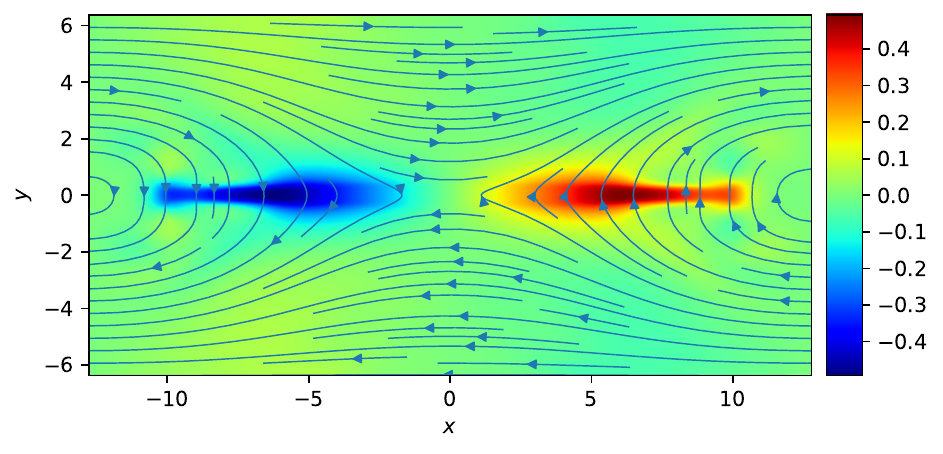}
			\label{fig:gem_exp_multiD_t80_512_uxi}}
		\subfigure[$u_e^x$]{
			\includegraphics[width=2.9in, height=1.4in]{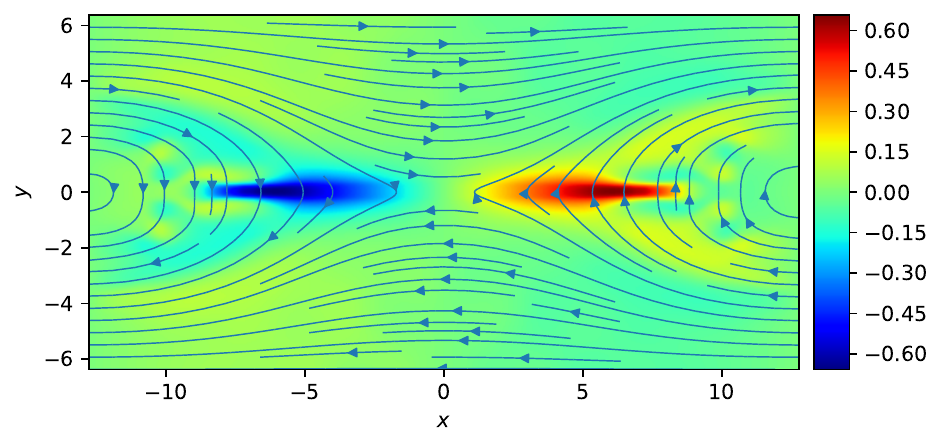}
			\label{fig:gem_exp_multiD_t80_512_uxe}}
		\caption{\nameref{test:2d_gem}: Plots of $\rho_i + \rho_e$, $B_z$-component, $u_i^x$ and Electron $u_e^x$ using $512 \times 256$ cells, for \textbf{O2EXP-MultiD} scheme, at time $t=80.0$. }
		\label{fig:gem_o2_80_exp_multiD}
	\end{center}
\end{figure}
\begin{figure}[!htbp]
	\begin{center}
		\subfigure[$\rho_i + \rho_e$]{
			\includegraphics[width=2.9in, height=1.4in]{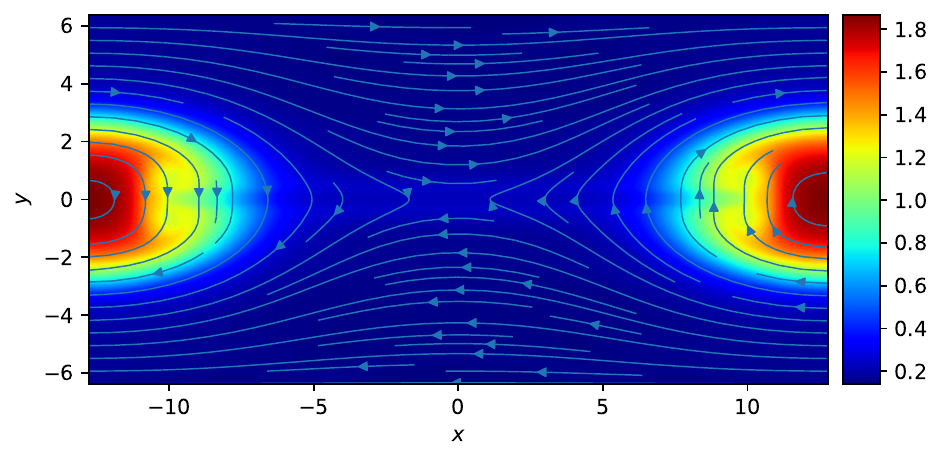}
			\label{fig:gem_imp_multiD_t80_512_rho}}
		\subfigure[$B_z$]{
			\includegraphics[width=2.9in, height=1.4in]{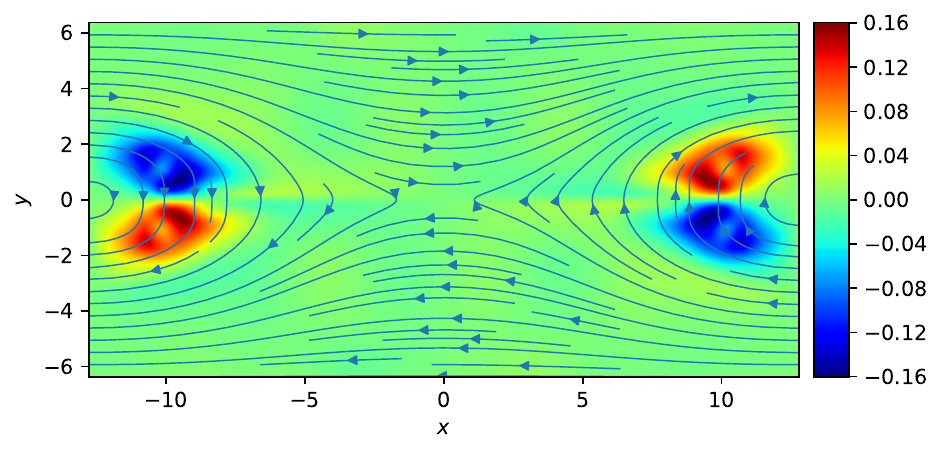}
			\label{fig:gem_imp_multiD_t80_512_Bz}}
		\subfigure[$u_i^x$]{
			\includegraphics[width=2.9in, height=1.4in]{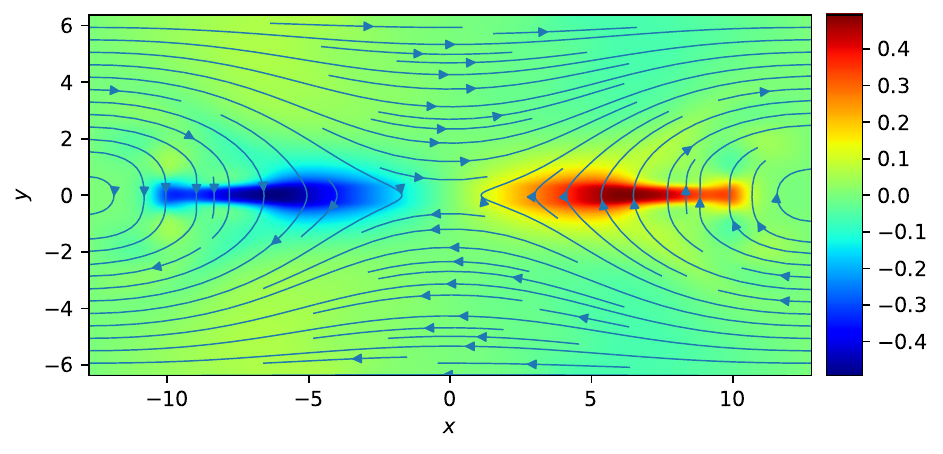}
			\label{fig:gem_imp_multiD_t80_512_uxi}}
		\subfigure[$u_e^x$]{
			\includegraphics[width=2.9in, height=1.4in]{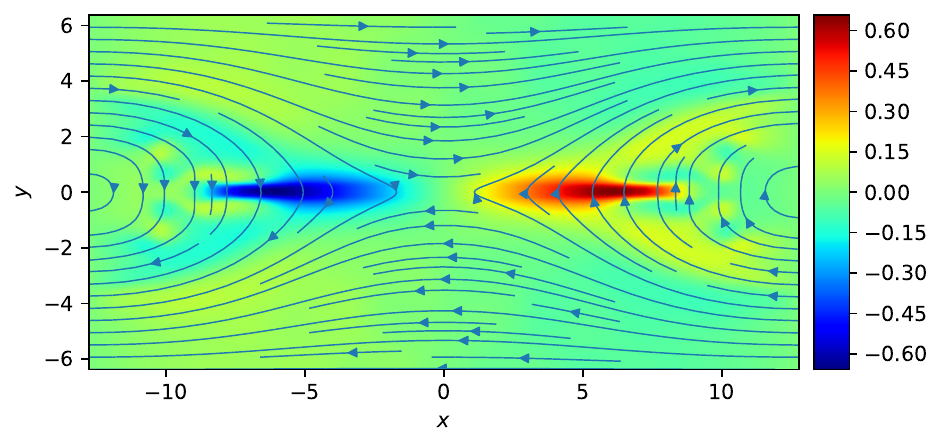}
			\label{fig:gem_imp_multiD_t80_512_uxe}}
		\caption{\nameref{test:2d_gem}: Plots of $\rho_i + \rho_e$, $B_z$-component, $u_i^x$ and Electron $u_e^x$ using $512 \times 256$ cells, for \textbf{O2EXP-MultiD} scheme, at time $t=80.0$. }
		\label{fig:gem_o2_80_imp_multiD}
	\end{center}
\end{figure}
\begin{figure}[h]
	\begin{center}
		\includegraphics[width=3.5in, height=2.9in]{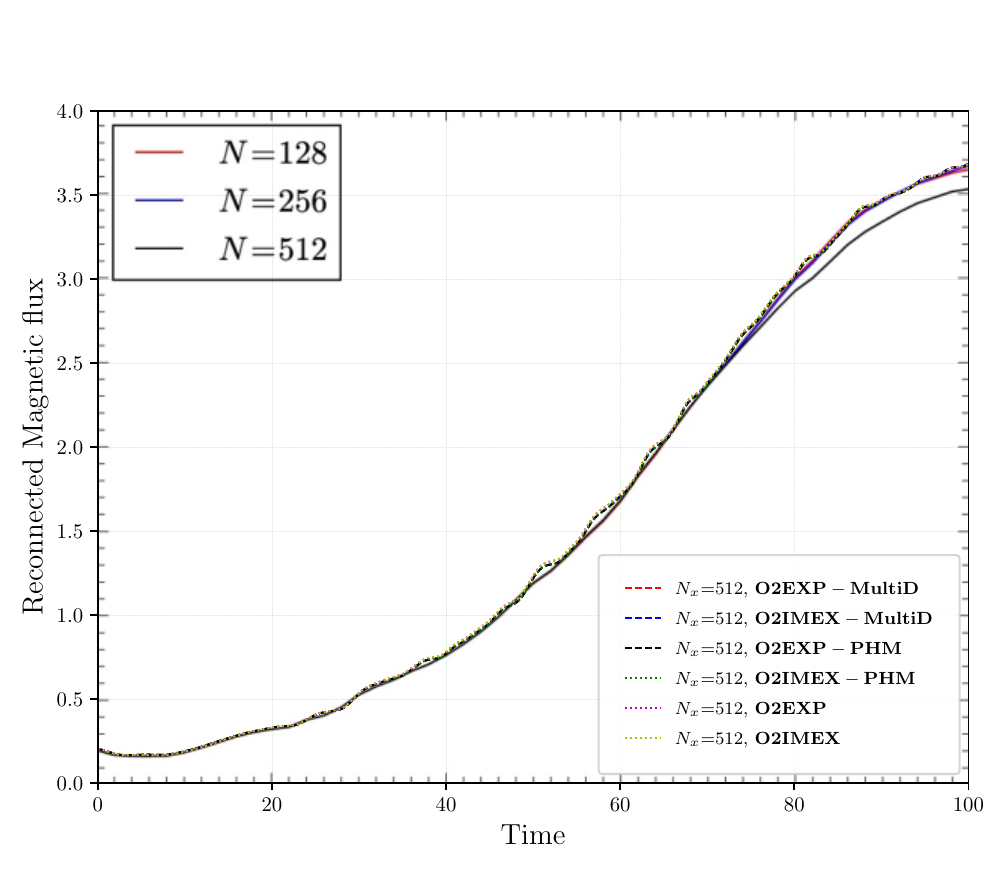}
		\caption{Time development of the reconnected magnetic flux using $512\times256$ cells, for different schemes. We overlay the plot on reconnection rates from \cite{Amano2016} (Solid lines). }
		\label{fig:reconn_rate}
	\end{center}
\end{figure}
\begin{figure}[!htbp]
	\begin{center}
		\subfigure[$\|\na \cdot \mathbf{B}^n\|_1$,  and $\|\na \cdot \mathbf{B}^n\|_2$  errors for explicit schemes.]{
			\includegraphics[width=1.4in, height=1.25in]{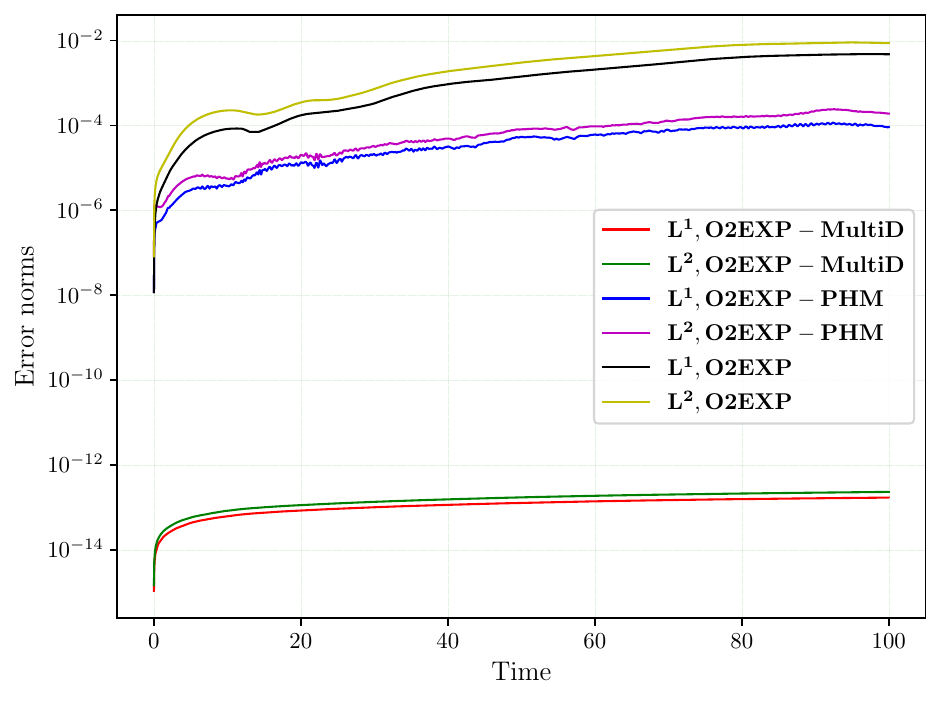}
			\label{fig:gem_divB_exp}}
		\subfigure[$\|\na \cdot \mathbf{B}^n\|_1$,  and $\|\na \cdot \mathbf{B}^n\|_2$  errors for IMEX schemes.]{
			\includegraphics[width=1.4in, height=1.25in]{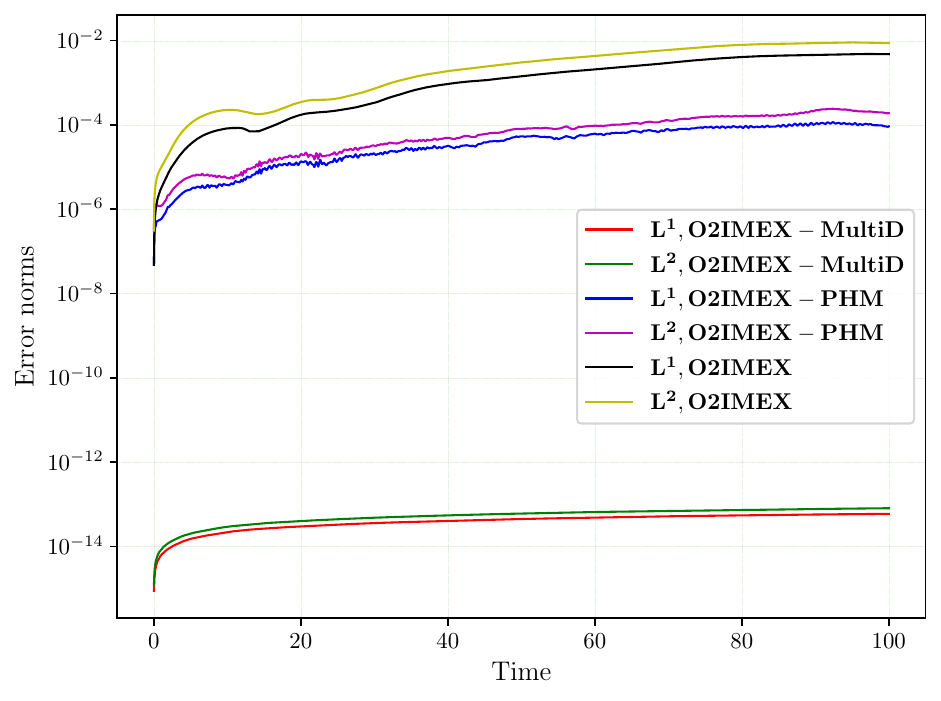}
			\label{fig:gem_divB}}
		\subfigure[$\|\na\cdot\Eb\|_{1}^{E},$ and $\|\na\cdot\Eb\|_{2}^{E}$ errors, for explicit schemes.]{
			\includegraphics[width=1.4in, height=1.25in]{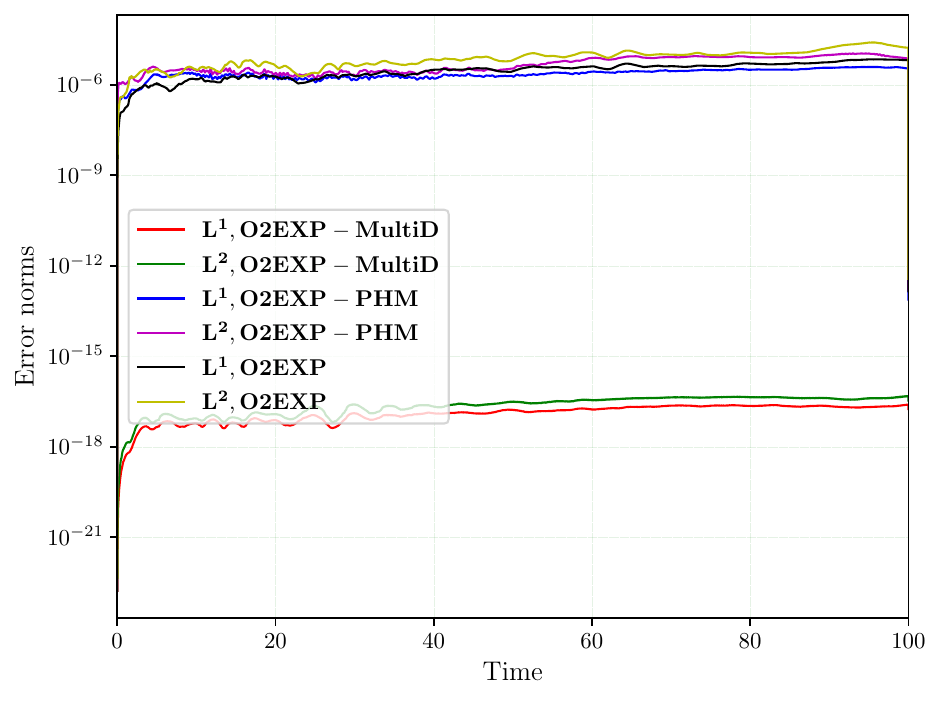}
			\label{fig:gem_divE_exp}}
		\subfigure[$\|\na\cdot\Eb\|_{1}^{I},$ and $\|\na\cdot\Eb\|_{2}^{I}$ errors, for IMEX schemes.]{
			\includegraphics[width=1.4in, height=1.25in]{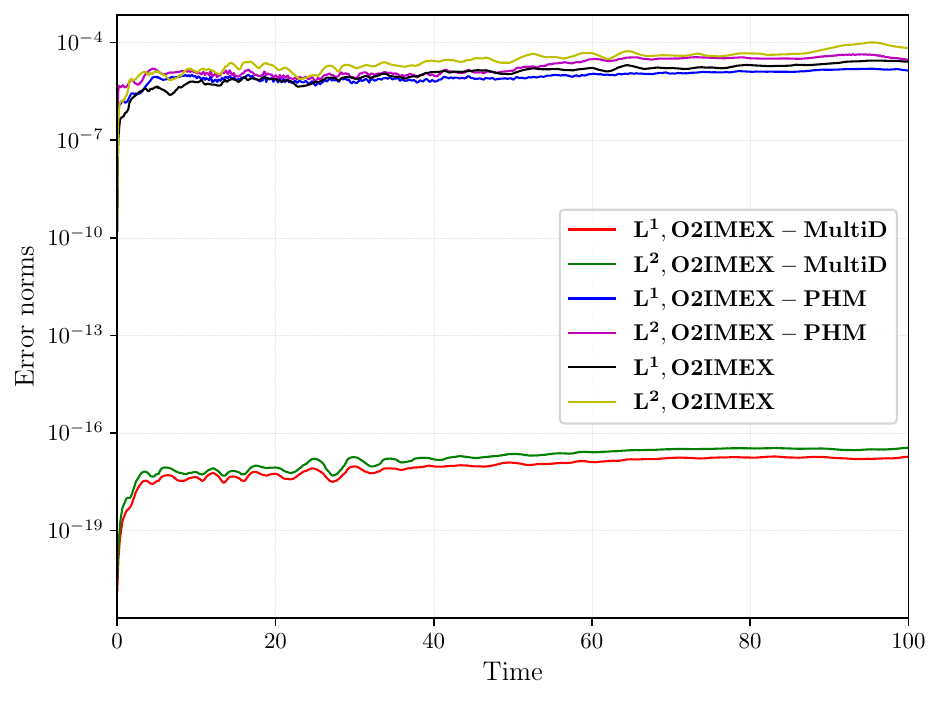}
			\label{fig:gem_divE}}
		\caption{\nameref{test:2d_gem}: Evolution of the divergence constraint errors for explicit (\textbf{O2EXP-MultiD}, \textbf{O2EXP-PHM} and \textbf{O2EXP}) and IMEX (\textbf{O2IMEX-MultiD}, \textbf{O2IMEX-PHM}, and \textbf{O2IMEX}) schemes using $512\times 256$ cells.}
		\label{fig:gem_div_norms}
	\end{center}
\end{figure}	

\section{Conclusions}
\label{sec:con}
In this article, we have presented second-order, co-located, entropy-stable schemes for two-fluid relativistic plasma flow equations. The proposed schemes are based on using a multidimensional Riemann solver at the cell edges to define the edge numerical flux for Maxwell's equations. Higher order accuracy is achieved using MinMod-based reconstruction in the diagonal directions since the state at vertices is required for the Riemann solver at the vertices. The schemes are shown to preserve the fully discrete version of the divergence-free magnetic field and Gauss law for the electric field. The schemes are then tested on several challenging test problems. In particular, for the two-dimensional test cases, we demonstrate that the proposed schemes preserve the divergence constraint up to the machine's precision.

\section*{Acknowledgements}
The work of Harish Kumar is supported in parts by VAJRA grant No. VJR/2018/000129 by the Dept. of Science and Technology, Govt. of India. Harish Kumar and Jaya Agnihotri acknowledge the support of FIST Grant Ref No. SR/FST/MS-1/2019/45 by the Dept. of Science and Technology, Govt. of India. The work of Praveen Chandrashekar is supported by the Department of Atomic Energy, Government of India, under project no. 12-R\&D-TFR-5.01-0520. 
\section*{Conflict of interest}
The authors declare that they have no conflict of interest.
\section*{Data Availability Declaration}
Data will be made available on reasonable request.

\bibliographystyle{elsarticle-num}
	
\bibliography{main}

\begin{thebibliography}{10}
\expandafter\ifx\csname url\endcsname\relax
  \def\url#1{\texttt{#1}}\fi
\expandafter\ifx\csname urlprefix\endcsname\relax\def\urlprefix{URL }\fi
\expandafter\ifx\csname href\endcsname\relax
  \def\href#1#2{#2} \def\path#1{#1}\fi

\bibitem{Komissarov1999}
S.~S. Komissarov, {A Godunov-type scheme for relativistic magnetohydrodynamics}, Mon. Not. R. Astron. Soc. 303~(2) (1999) 343--366.
\newblock \href {https://doi.org/10.1046/j.1365-8711.1999.02244.x} {\path{doi:10.1046/j.1365-8711.1999.02244.x}}.

\bibitem{Balsara2001}
D.~Balsara, {Total Variation Diminishing Scheme for Relativistic Magnetohydrodynamics}, The Astrophysical Journal Supplement Series 132~(1) (2001) 83--101.
\newblock \href {https://doi.org/10.1086/318941} {\path{doi:10.1086/318941}}.

\bibitem{DelZanna2003}
L.~{Del Zanna}, N.~Bucciantini, P.~Londrillo, {An efficient shock-capturing central-type scheme for multidimensional relativistic flows II. Magnetohydrodynamics}, Astronomy and Astrophysics 400~(2) (2003) 397--413.
\newblock \href {https://doi.org/10.1051/0004-6361:20021641} {\path{doi:10.1051/0004-6361:20021641}}.

\bibitem{Mignone2006}
A.~Mignone, G.~Bodo, {An HLLC Riemann solver for relativistic flows – II. Magnetohydrodynamics}, Monthly Notices of the Royal Astronomical Society 368~(3) (2006) 1040--1054.
\newblock \href {https://doi.org/10.1111/J.1365-2966.2006.10162.X} {\path{doi:10.1111/J.1365-2966.2006.10162.X}}.

\bibitem{Komissarov2007}
S.~S. Komissarov, {Multidimensional numerical scheme for resistive relativistic magnetohydrodynamics}, Monthly Notices of the Royal Astronomical Society 382~(3) (2007) 995--1004.
\newblock \href {https://doi.org/10.1111/j.1365-2966.2007.12448.x} {\path{doi:10.1111/j.1365-2966.2007.12448.x}}.

\bibitem{Amano2016}
T.~Amano, {A Second-order divergence-constrained multidimensional numerical scheme for relativistic two-fluid electrodynamics}, The Astrophysical Journal 831~(1) (2016).
\newblock \href {https://doi.org/10.3847/0004-637x/831/1/100} {\path{doi:10.3847/0004-637x/831/1/100}}.

\bibitem{Balsara2016}
D.~S. Balsara, T.~Amano, S.~Garain, J.~Kim, {A high-order relativistic two-fluid electrodynamic scheme with consistent reconstruction of electromagnetic fields and a multidimensional Riemann solver for electromagnetism}, J. Comput. Phys. 318 (2016) 169--200.
\newblock \href {https://doi.org/10.1016/j.jcp.2016.05.006} {\path{doi:10.1016/j.jcp.2016.05.006}}.

\bibitem{Hakim2006}
A.~Hakim, J.~Loverich, U.~Shumlak, {A high resolution wave propagation scheme for ideal Two-Fluid plasma equations}, Journal of Computational Physics 219~(1) (2006) 418--442.
\newblock \href {https://doi.org/10.1016/j.jcp.2006.03.036} {\path{doi:10.1016/j.jcp.2006.03.036}}.

\bibitem{loverich2011}
J.~Loverich, A.~Hakim, U.~Shumlak, {A discontinuous Galerkin method for ideal two-fluid plasma equations}, Communications in Computational Physics 9~(2) (2011) 240--268.
\newblock \href {https://doi.org/10.4208/cicp.250509.210610a} {\path{doi:10.4208/cicp.250509.210610a}}.

\bibitem{wang2020}
L.~Wang, A.~H. Hakim, J.~Ng, C.~Dong, K.~Germaschewski, Exact and locally implicit source term solvers for multifluid-maxwell systems, Journal of Computational Physics 415 (2020) 109510.
\newblock \href {https://doi.org/10.1016/j.jcp.2020.109510} {\path{doi:10.1016/j.jcp.2020.109510}}.

\bibitem{kumar2012entropy}
H.~Kumar, S.~Mishra, {Entropy stable numerical schemes for two-fluid plasma equations}, Journal of scientific computing 52~(2) (2012) 401--425.
\newblock \href {https://doi.org/10.1007/s10915-011-9554-7} {\path{doi:10.1007/s10915-011-9554-7}}.

\bibitem{Abgrall2014}
R.~Abgrall, H.~Kumar, {Robust Finite Volume Schemes for Two-Fluid Plasma Equations}, Journal of Scientific Computing 60~(3) (2014) 584--611.
\newblock \href {https://doi.org/10.1007/s10915-013-9809-6} {\path{doi:10.1007/s10915-013-9809-6}}.

\bibitem{Meena2019}
A.~K. Meena, H.~Kumar, {Robust numerical schemes for Two-Fluid Ten-Moment plasma flow equations}, Zeitschrift fur Angewandte Mathematik und Physik 70~(1) (2019) 1--30.
\newblock \href {https://doi.org/10.1007/s00033-018-1061-3} {\path{doi:10.1007/s00033-018-1061-3}}.

\bibitem{zenitani2009relativistic}
S.~Zenitani, M.~Hesse, A.~Klimas, {Relativistic two-fluid simulations of guide field magnetic reconnection}, Astrophysical Journal 705~(1) (2009) 907--913.
\newblock \href {https://doi.org/10.1088/0004-637X/705/1/907} {\path{doi:10.1088/0004-637X/705/1/907}}.

\bibitem{Zenitani2010}
S.~Zenitani, M.~Hesse, A.~Klimas, {Resistive magnetohydrodynamic simulations of relativistic magnetic reconnection}, Astrophysical Journal Letters 716~(2) (2010) L214----L218.
\newblock \href {https://doi.org/10.1088/2041-8205/716/2/L214} {\path{doi:10.1088/2041-8205/716/2/L214}}.

\bibitem{Amano2013}
T.~Amano, J.~G. Kirk, {The role of superluminal electromagnetic waves in pulsar wind termination shocks}, Astrophysical Journal 770~(1) (2013) 18.
\newblock \href {https://doi.org/10.1088/0004-637X/770/1/18} {\path{doi:10.1088/0004-637X/770/1/18}}.

\bibitem{Barkov2014}
M.~Barkov, S.~S. Komissarov, V.~Korolev, A.~Zankovich, {A multidimensional numerical scheme for two-fluid relativistic magnetohydrodynamics}, Monthly Notices of the Royal Astronomical Society 438~(1) (2014) 704--716.
\newblock \href {https://doi.org/10.1093/mnras/stt2247} {\path{doi:10.1093/mnras/stt2247}}.

\bibitem{Bhoriya2023}
D.~Bhoriya, H.~Kumar, P.~Chandrashekar, High-order finite-difference entropy stable schemes for two-fluid relativistic plasma flow equations, Journal of Computational Physics 488 (2023) 112207.

\bibitem{bhoriya_entropydg_2023}
D.~Bhoriya, B.~Biswas, H.~Kumar, P.~Chandrashekhar, \href{https://doi.org/10.1007/s10915-023-02387-z}{Entropy {Stable} {Discontinuous} {Galerkin} {Schemes} for {Two}-{Fluid} {Relativistic} {Plasma} {Flow} {Equations}}, Journal of Scientific Computing 97~(3) (2023) 72.
\newblock \href {https://doi.org/10.1007/s10915-023-02387-z} {\path{doi:10.1007/s10915-023-02387-z}}.
\newline\urlprefix\url{https://doi.org/10.1007/s10915-023-02387-z}

\bibitem{Zenitani2009a}
S.~Zenitani, M.~Hesse, A.~Klimas, {Two-fluid magnetohydrodynamic simulations of relativistic magnetic reconnection}, Astrophysical Journal 696~(2) (2009) 1385--1401.
\newblock \href {https://doi.org/10.1088/0004-637X/696/2/1385} {\path{doi:10.1088/0004-637X/696/2/1385}}.

\bibitem{jaya2024}
J.~Agnihotri, D.~Bhoriya, H.~Kumar, P.~Chandrashekhar, D.~S. Balsara, Second order divergence constraint preserving entropy stable finite difference schemes for ideal two-fluid plasma flow equations, Submitted (2024).

\bibitem{balsara2014}
D.~S. Balsara, M.~Dumbser, R.~Abgrall, {Multidimensional HLLC Riemann solver for unstructured meshes--with application to Euler and MHD flows}, Journal of Computational Physics 261 (2014) 172--208.
\newblock \href {https://doi.org/10.1016/j.jcp.2013.12.029} {\path{doi:10.1016/j.jcp.2013.12.029}}.

\bibitem{chandrashekar2020}
P.~Chandrashekar, R.~Kumar, {Constraint preserving discontinuous Galerkin method for ideal compressible MHD on 2-D Cartesian grids}, Journal of Scientific Computing 84~(2) (2020) 1--43.
\newblock \href {https://doi.org/10.1007/s10915-020-01289-8} {\path{doi:10.1007/s10915-020-01289-8}}.

\bibitem{Bhoriya2020}
D.~Bhoriya, H.~Kumar, {Entropy-stable schemes for relativistic hydrodynamics equations}, Zeitschrift fur Angewandte Mathematik und Physik 71~(1) (2020).
\newblock \href {https://doi.org/10.1007/s00033-020-1250-8} {\path{doi:10.1007/s00033-020-1250-8}}.

\bibitem{Schneider1993}
V.~Schneider, U.~Katscher, D.~H. Rischke, B.~Waldhauser, J.~A. Maruhn, C.~D. Munz, {New Algorithms for Ultra-relativistic Numerical Hydrodynamics}, Journal of Computational Physics 105~(1) (1993) 92--107.
\newblock \href {https://doi.org/10.1006/jcph.1993.1056} {\path{doi:10.1006/jcph.1993.1056}}.

\bibitem{jiang1996origin}
B.-n. Jiang, J.~Wu, L.~A. Povinelli, The origin of spurious solutions in computational electromagnetics, Journal of computational physics 125~(1) (1996) 104--123.

\bibitem{bond2016plasma}
D.~M. Bond, V.~Wheatley, R.~Samtaney, {Plasma flow simulation using the two-fluid model}, in: Proceedings of the 20th Australasian Fluid Mechanics Conference, 2016.

\bibitem{munz2000}
C.-D. Munz, P.~Omnes, R.~Schneider, E.~Sonnendr{\"{u}}cker, U.~Voss, {Divergence correction techniques for Maxwell solvers based on a hyperbolic model}, Journal of Computational Physics 161~(2) (2000) 484--511.
\newblock \href {https://doi.org/10.1006/jcph.2000.6507} {\path{doi:10.1006/jcph.2000.6507}}.

\bibitem{Fjordholm2012}
U.~S. Fjordholm, S.~Mishra, E.~Tadmor, {Arbitrarily High-order Accurate Entropy Stable Essentially Nonoscillatory Schemes for Systems of Conservation Laws}, SIAM Journal on Numerical Analysis 50~(2) (2012) 544--573.
\newblock \href {https://doi.org/10.1137/110836961} {\path{doi:10.1137/110836961}}.

\bibitem{Tadmor1987}
E.~Tadmor, {The Numerical Viscosity of Entropy Stable Schemes for Systems of Conservation Laws. I}, Mathematics of Computation 49~(179) (1987) 91.
\newblock \href {https://doi.org/10.2307/2008251} {\path{doi:10.2307/2008251}}.

\bibitem{balsara2010}
D.~S. Balsara, {Multidimensional HLLE Riemann solver: application to Euler and magnetohydrodynamic flows}, Journal of Computational Physics 229~(6) (2010) 1970--1993.
\newblock \href {https://doi.org/10.1016/j.jcp.2009.11.018} {\path{doi:10.1016/j.jcp.2009.11.018}}.

\bibitem{Balsara2016aderweno}
D.~S. Balsara, J.~Kim, {A subluminal relativistic magnetohydrodynamics scheme with ADER-WENO predictor and multidimensional Riemann solver-based corrector}, Journal of Computational Physics 312 (2016) 357--384.
\newblock \href {https://doi.org/10.1016/j.jcp.2016.02.001} {\path{doi:10.1016/j.jcp.2016.02.001}}.

\bibitem{Gottlieb2001}
S.~Gottlieb, C.~W. Shu, E.~Tadmor, {Strong stability-preserving high-order time discretization methods}, SIAM Rev. 43~(1) (2001) 89--112.
\newblock \href {https://doi.org/10.1137/S003614450036757X} {\path{doi:10.1137/S003614450036757X}}.

\bibitem{Pareschi2005}
L.~Pareschi, G.~Russo, {Implicit-explicit Runge-Kutta schemes and applications to hyperbolic systems with relaxation}, J. Sci. Comput. 25~(1) (2005) 129--155.
\newblock \href {https://doi.org/10.1007/s10915-004-4636-4} {\path{doi:10.1007/s10915-004-4636-4}}.

\bibitem{Dennis1996}
J.~E. Dennis, R.~B. Schnabel, {Numerical Methods for Unconstrained Optimization and Nonlinear Equations}, Society for Industrial and Applied Mathematics, 1996.
\newblock \href {https://doi.org/10.1137/1.9781611971200} {\path{doi:10.1137/1.9781611971200}}.

\bibitem{brio1988}
M.~Brio, C.~C. Wu, An upwind differencing scheme for the equations of ideal magnetohydrodynamics, Journal of computational physics 75~(2) (1988) 400--422.

\bibitem{Orszag1979}
S.~A. Orszag, C.~M. Tang, {Small-scale structure of two-dimensional magnetohydrodynamic turbulence}, J. Fluid Mech. 90~(1) (1979) 129--143.
\newblock \href {https://doi.org/10.1017/S002211207900210X} {\path{doi:10.1017/S002211207900210X}}.

\bibitem{Birn2001}
J.~Birn, J.~F. Drake, M.~A. Shay, B.~N. Rogers, R.~E. Denton, M.~Hesse, M.~Kuznetsova, Z.~W. Ma, A.~Bhattacharjee, A.~Otto, P.~L. Pritchett, {Geospace Environmental Modeling (GEM) Magnetic Reconnection Challenge}, Journal of Geophysical Research: Space Physics 106~(A3) (2001) 3715--3719.
\newblock \href {https://doi.org/10.1029/1999JA900449} {\path{doi:10.1029/1999JA900449}}.

\end{thebibliography}

\end{document}